\documentclass{amsart}
\usepackage{graphicx}
\usepackage{amsmath}
\usepackage{amscd}
\usepackage{mathtools}
\usepackage{enumerate}
\usepackage{tikz-cd}
\usepackage[all,cmtip]{xy}
\usepackage{tikz}

\newtheorem{dfn}{Definition}[section]
\newtheorem{thm}[dfn]{Theorem}
\newtheorem{pro}[dfn]{Proposition}
\newtheorem{lem}[dfn]{Lemma}
\newtheorem{cro}[dfn]{Corollary}
\newtheorem{mar}[dfn]{Remark}

\title[universal covering calabi-yau manifolds of $E^{[n]}$]{universal covering calabi-yau manifolds of the hilbert schemes of n points of enriques surfaces}
\author{Taro Hayashi}
\address{
(Taro Hayashi)
Department of Mathematics,
Graduate School of Science,
Osaka University,
Machikaneyamacho 1-1, Toyonaka, Osaka 560-0043, Japan
}
\email{tarou-hayashi@cr.math.sci.osaka-u.ac.jp}
\date{\today}
\begin{document}

\maketitle
\begin{abstract}
Throughout this paper, we work over ${\mathbb C}$, and $n$ is an integer such that $n\geq 2$. 
For an Enriques surface $E$, let $E^{[n]}$ be the Hilbert scheme of $n$ points of $E$. 
By Oguiso and Schr\"oer $[\ref{bio:3},\,{\rm Theorem}\,3.1]$, $E^{[n]}$ has a Calabi-Yau manifold $X$ as the universal covering space, $\pi :X\rightarrow E^{[n]}$ of degree $2$.
The purpose of this paper is to investigate a relationship of the small deformation of $E^{[n]}$ and that of $X$ $({\rm Theorem}\ 1.1)$, the natural automorphism of $E^{[n]}$ $({\rm Theorem}\,1.2)$,
and count the number of isomorphism classes of the Hilbert schemes of $n$ points of Enriques surfaces which has $X$ as the universal covering space when we fix one $X$ $({\rm Theorem}\,1.3)$.
\end{abstract}

\section{Introduction}
Throughout this paper, we work over ${\mathbb C}$, and $n$ is an integer such that $n\geq 2$. 
For an Enriques surface $E$, let $E^{[n]}$ be the Hilbert scheme of $n$ points of $E$. 
By Oguiso and Schr\"oer $[\ref{bio:3},\,{\rm Theorem}\,3.1]$, $E^{[n]}$ has a Calabi-Yau manifold $X$ as the universal covering space, $\pi :X\rightarrow E^{[n]}$ of degree $2$.
The purpose of this paper is to investigate a relationship of the small deformation of $E^{[n]}$ and that of $X$ $({\rm Theorem}\,1.1)$, the natural automorphism of $E^{[n]}$ $({\rm Theorem}\,1.2)$,
and count the number of isomorphism classes of the Hilbert schemes of $n$ points of Enriques surfaces which has $X$ as the universal covering space when we fix one $X$ $({\rm Theorem}\,1.3)$.
\\

Small deformations of a smooth compact surface $S$ induce that of the Hilbert scheme of $n$ points of $S$ by taking the relative Hilbert scheme.
Let $K$ be a $K3$ surface.
By Beauville $[\ref{bio:6},\,{\rm page}\,779$-$781]$, a very general small deformation of $K^{[n]}$ is not isomorphic to the Hilbert scheme of $n$ points of a $K3$ surface.
On the other hand, by Fantechi $[\ref{bio:1},\,{\rm Theorems}0.1\,{\rm and}\,0.3]$, every small deformations of $E^{[n]}$ is induced by that of $E$.
Since $X$ is the universal covering of $E^{[n]}$, the small deformation of $E^{[n]}$ induces that of $X$.
We consider a relationship of the small deformation of $E^{[n]}$ and that of $X$.
Our first main result is following:
\begin{thm}\label{thm:1} 
Let $E$ be an Enriques surface, $E^{[n]}$ the Hilbert scheme of $n$ points of $E$, and $X$ the universal covering space of $E^{[n]}$. 
Then every small deformation of $X$ is induced by that of $E^{[n]}$. 
\end{thm}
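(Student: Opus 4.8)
The plan is to compute $H^1(X,T_X)$ in terms of cohomology on $E^{[n]}$, identify it with the space of first-order deformations of $E^{[n]}$, and then conclude by unobstructedness. Let $\pi\colon X\to E^{[n]}$ be the universal covering, an \'etale double cover with covering involution $\iota$, and write $\pi_*\mathcal O_X=\mathcal O_{E^{[n]}}\oplus L$. Since $\pi_1(E^{[n]})=\mathbb Z/2$ and $H^1(E^{[n]},\mathcal O_{E^{[n]}})=0$, there is a unique non-trivial $2$-torsion line bundle on $E^{[n]}$, and it defines $\pi$; on the other hand $\omega_{E^{[n]}}$ equals the line bundle $(\omega_E)_n$ induced by $\omega_E$ (the Hilbert--Chow morphism being crepant), so it is $2$-torsion, and it is non-trivial because $H^0(E^{[n]},(\omega_E)_n)=\mathrm{Sym}^n H^0(E,\omega_E)=0$. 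Hence $L=\omega_{E^{[n]}}$. As $\pi$ is \'etale, $T_X=\pi^*T_{E^{[n]}}$, and the projection formula yields
\[
H^1(X,T_X)=H^1\!\big(E^{[n]},\pi_*\pi^*T_{E^{[n]}}\big)=H^1(E^{[n]},T_{E^{[n]}})\ \oplus\ H^1\!\big(E^{[n]},T_{E^{[n]}}\otimes\omega_{E^{[n]}}\big),
\]
where the first summand is the $\iota$-invariant part and is exactly the image under $\pi^*$ of the first-order deformations of $E^{[n]}$, and the second is the $\iota$-anti-invariant part.

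The heart of the proof is that the anti-invariant summand vanishes. On the $2n$-fold $E^{[n]}$ there is a canonical isomorphism $T_{E^{[n]}}\otimes\omega_{E^{[n]}}\cong\Omega^{2n-1}_{E^{[n]}}$, so $H^1(E^{[n]},T_{E^{[n]}}\otimes\omega_{E^{[n]}})=H^1(E^{[n]},\Omega^{2n-1}_{E^{[n]}})=H^{2n-1,1}(E^{[n]})$. The cohomology of an Enriques surface is of Hodge--Tate type (since $q(E)=p_g(E)=0$, every class has type $(p,p)$), and by G\"ottsche--Soergel's description of the Hodge structure of the Hilbert scheme of points of a surface (equivalently, the Hodge-bigraded Nakajima Fock-space model), $H^*(E^{[n]},\mathbb C)$ is again of Hodge--Tate type; hence $h^{p,q}(E^{[n]})=0$ whenever $p\neq q$, and in particular $h^{2n-1,1}(E^{[n]})=0$ because $2n-1\geq 3$. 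Therefore $\pi^*\colon H^1(E^{[n]},T_{E^{[n]}})\to H^1(X,T_X)$ is an isomorphism.

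To globalize, note that the \'etale-double-cover construction is locally constant in families, so every small deformation of $E^{[n]}$ induces one of $X$, defining a morphism of deformation germs $\mathrm{Def}(E^{[n]})\to\mathrm{Def}(X)$ whose differential is the isomorphism $\pi^*$ above. By Fantechi [\ref{bio:1}, Theorems 0.1 and 0.3] all deformations of $E^{[n]}$ come from deformations of the Enriques surface $E$, yielding a morphism $\mathrm{Def}(E)\to\mathrm{Def}(E^{[n]})$ that is an isomorphism on tangent spaces; moreover $\mathrm{Def}(E)$ is smooth (Enriques surfaces have unobstructed deformations) and $\mathrm{Def}(X)$ is smooth by Bogomolov--Tian--Todorov, $X$ being Calabi--Yau. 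Thus the composite $\mathrm{Def}(E)\to\mathrm{Def}(E^{[n]})\to\mathrm{Def}(X)$ is a morphism of smooth germs which is an isomorphism on tangent spaces, hence an isomorphism; in particular $\mathrm{Def}(E^{[n]})\to\mathrm{Def}(X)$ is surjective, i.e. every small deformation of $X$ is induced by one of $E^{[n]}$.

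I expect the decisive point to be the vanishing $h^{2n-1,1}(E^{[n]})=0$, i.e. having the Hodge structure of $E^{[n]}$ sufficiently under control; everything else (the identification $L=\omega_{E^{[n]}}$, the projection-formula splitting of $H^1(T_X)$, and the two unobstructedness statements) is routine given the cited results. If one would rather avoid the full G\"ottsche--Soergel Hodge computation, the same vanishing can be extracted directly from the standard exact sequences expressing $\Omega^1_{E^{[n]}}$ in terms of tautological bundles together with $H^{1,0}(E)=H^{2,0}(E)=0$, but the Hodge-theoretic route is the cleaner one and is the one I would follow.
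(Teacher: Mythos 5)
Your proof is correct and follows essentially the same route as the paper: both split $H^1(X,T_X)$ via the projection formula for the \'etale double cover into $H^1(E^{[n]},T_{E^{[n]}})\oplus H^1(E^{[n]},\Omega^{2n-1}_{E^{[n]}})$ (using $T_{E^{[n]}}\otimes\omega_{E^{[n]}}\cong\Omega^{2n-1}_{E^{[n]}}$ and $L=\omega_{E^{[n]}}$ from Oguiso--Schr\"oer), reduce everything to the vanishing $h^{2n-1,1}(E^{[n]})=0$, and conclude by unobstructedness on both sides together with Fantechi. The only real differences are that you derive the key vanishing from the observation that $H^{*}(E^{[n]},{\mathbb C})$ is of Hodge--Tate type (via G\"ottsche--Soergel), which is cleaner than the paper's explicit partition-by-partition computation in its Proposition 3.1, and that you invoke Bogomolov--Tian--Todorov where the paper uses Ran's theorem on torsion canonical bundles; both differences are cosmetic.
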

Compare with the fact that a general small deformation of the universal covering $K3$ surface of $E$ is not induced by that of $E$.
\\

Next, we study the natural automorphisms of $E^{[n]}$.
Any automorphism $f\in $ Aut$(S)$ induces an automorphism $f^{[n]}\in $ Aut$(S^{[n]})$.
An automorphism $g\in $ Aut$(S^{[n]})$ is called natural if there is an automorphism $f\in $ Aut$(S)$ such that $g=f^{[n]}$.
When $K$ is a $K3$ surface,
the natural automorphisms of $K^{[n]}$ have been studied by Boissi\`ere and Sarti $[\ref{bio:5},\,{\rm Theorem}\,1]$. 
They used the global Torelli theorem for
$K3$ surfaces: an effective Hodge isometry $\alpha $ is induced by a unique automorphism $\beta $
of $K3$ surface such that $\alpha =\beta ^{\ast }$.
Our second main result is the following theorem, similar to $[\ref{bio:5},\,{\rm Theorem}\,1]$ without 
the Torelli theorem for Enriques surfaces by using a result of Oguiso $[\ref{bio:4},\,{\rm Proposition}\,4,4]$.
\begin{thm}\label{thm:2}
Let $E$ be an Enriques surface, $D_{E}$ the exceptional divisor of the Hilbert-Chow morphism $\pi _{E}:E^{[n]}\rightarrow E^{(n)}$, and $n\geq 2$. An automorphism $f$ of $E^{[n]}$ is natural if and only if $f(D_{E})=D_{E}$, i.e.\,$f^{\ast }({\mathcal O}_{E^{[n]}}(D_{E}))={\mathcal O}_{E^{[n]}}(D_{E})$. 
\end{thm}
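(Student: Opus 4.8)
The plan is to imitate the proof of Boissi\`ere--Sarti $[\ref{bio:5},\,{\rm Theorem}\,1]$, with the one appeal to the global Torelli theorem for $K3$ surfaces (used there to realise a Hodge isometry by a surface automorphism) replaced by Oguiso's $[\ref{bio:4},\,{\rm Proposition}\,4.4]$.

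I would first dispose of the easy implication and the reformulation. If $f=g^{[n]}$ is natural, then $g$ induces $g^{(n)}\in{\rm Aut}(E^{(n)})$ with $g^{(n)}\circ\pi_{E}=\pi_{E}\circ f$, so $f$ carries the exceptional locus $D_{E}$ of $\pi_{E}$ onto itself. For the reformulation, $D_{E}$ is $\pi_{E}$-exceptional, hence rigid: it is the unique effective divisor in its complete linear system, so $f^{\ast}{\mathcal O}_{E^{[n]}}(D_{E})\cong{\mathcal O}_{E^{[n]}}(D_{E})$ holds if and only if $f(D_{E})=D_{E}$.

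For the substantial implication, assume $f(D_{E})=D_{E}$ and aim to produce $g\in{\rm Aut}(E)$ with $f=g^{[n]}$. \textbf{Step 1 (descent to $E^{(n)}$).} The Hilbert--Chow morphism $\pi_{E}$ is an isomorphism away from the singular locus of $E^{(n)}$ and is the contraction of the unique extremal ray of $\overline{NE}(E^{[n]})$ on which $D_{E}$ is negative (generated by the class of a line in a fibre of $\pi_{E}$). Since $f(D_{E})=D_{E}$, the push-forward $f_{\ast}$ preserves that ray, so $f$ is compatible with $\pi_{E}$ and descends to $\bar f\in{\rm Aut}(E^{(n)})$ with $\pi_{E}\circ f=\bar f\circ\pi_{E}$. \textbf{Step 2 (descent to $E$).} Now $\bar f$ preserves the singular locus of $E^{(n)}$, hence the branch locus of the quotient $q\colon E^{n}\to E^{(n)}$; since $q$ realises $E^{n}$ as the normalisation of $E^{(n)}$ in the Galois extension $\mathbb{C}(E^{n})/\mathbb{C}(E^{(n)})$ with group $S_{n}$, the isomorphism $\bar f$ lifts to $\tilde f\in{\rm Aut}(E^{n})$ normalising the $S_{n}$-action. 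An Enriques surface has $h^{1,0}(E)=h^{2,0}(E)=0$ and $H^{0}(E,T_{E})=0$, so ${\rm Aut}(E^{n})$ is discrete; here I would invoke Oguiso's $[\ref{bio:4},\,{\rm Proposition}\,4.4]$ --- in place of the Torelli theorem --- to conclude that every automorphism of $E^{n}$ is a product of automorphisms of the factors composed with a permutation of the factors. As $\tilde f$ normalises the $S_{n}$-action, all these factor automorphisms must coincide with a single $g\in{\rm Aut}(E)$, whence $\bar f=g^{(n)}$. \textbf{Step 3 (conclusion).} Then $\pi_{E}\circ f=\bar f\circ\pi_{E}=g^{(n)}\circ\pi_{E}=\pi_{E}\circ g^{[n]}$, and since $\pi_{E}$ is a birational morphism and $E^{[n]}$ is separated, $f=g^{[n]}$; thus $f$ is natural.

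The main obstacle is Step 2. For $K3$ surfaces the passage back from the Hilbert scheme to the surface is handled by the global Torelli theorem; there is no usable Torelli theorem for Enriques surfaces, and, $E^{[n]}$ being only Calabi--Yau and not holomorphic symplectic, $H^{2}(E^{[n]},\mathbb{Z})$ carries no Beauville--Bogomolov form and the map ${\rm Aut}(E^{[n]})\to{\rm O}(H^{2}(E^{[n]},\mathbb{Z}))$ is not known to be injective a priori. Both obstructions are circumvented by staying on the geometric side --- working through the Hilbert--Chow morphism and the symmetric product --- and applying Oguiso's result there. The residual work is bookkeeping: making the descent in Step 1 and the lifting in Step 2 precise. (If one instead ran the cohomological argument of $[\ref{bio:5}]$, the same input of Oguiso's would be needed to turn the induced Hodge isometry of $H^{2}(E,\mathbb{Z})$ into an automorphism of $E$, and one would also have to dispose of a $2$-torsion ambiguity, since $f^{\ast}\delta-\delta$ lies in the $2$-torsion subgroup of $H^{2}(E^{[n]},\mathbb{Z})$, where $2\delta=[D_{E}]$.)
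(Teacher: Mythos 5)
Your easy implication and the reformulation via $\dim H^{0}(E^{[n]},{\mathcal O}_{E^{[n]}}(D_{E}))=1$ agree with the paper (Proposition \ref{dfn:80} and the remark following it). The substantial implication, however, has a genuine gap at your Step 2, and it sits exactly where the real content of the theorem lies. You claim that $\bar f\in{\rm Aut}(E^{(n)})$ lifts to $\tilde f\in{\rm Aut}(E^{n})$ because $E^{n}$ is the normalisation of $E^{(n)}$ in a Galois extension. That is not enough: an automorphism of the base lifts to a given Galois cover only if it carries that cover to an isomorphic one, equivalently only if the induced automorphism of $\pi_{1}(E^{(n)}\setminus\Delta_{E}^{(n)})\cong G=({\mathbb Z}/2)^{n}\rtimes{\mathcal S}_{n}$ preserves (up to conjugacy) the subgroup $({\mathbb Z}/2)^{n}$ corresponding to $E^{n}$. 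This subgroup is not characteristic in $G$: already for $n=2$ one has $G\cong D_{4}$, whose outer automorphism interchanges the Klein subgroup corresponding to $E^{2}$ with the one corresponding to the Calabi--Yau cover $X$ itself; these are genuinely non-isomorphic covers of $E^{(2)}\setminus\Delta_{E}^{(2)}$. Ruling out this behaviour for the automorphism induced by $f$ is essentially equivalent to the conclusion you are trying to prove, so the step begs the question. A secondary problem is that Oguiso's proposition, as cited, concerns (birational) automorphisms of products of $K3$ surfaces; the paper applies it to $K^{n}$, the product of copies of the $K3$ cover, and applying the analogous splitting statement to $E^{n}$ would require a separate argument (splitting of automorphisms of products is not automatic, cf.\ products of elliptic curves).

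The paper's proof is arranged precisely to avoid both issues. It restricts $f$ to $E^{[n]}\setminus D_{E}$ and lifts it not to $E^{n}$ but to the \emph{universal} cover $\mu^{(n)}\circ p_{K}:K^{n}\setminus\Gamma_{K}\rightarrow E^{[n]}\setminus D_{E}$, where a lift $g$ always exists; it then extends $g$ over the codimension-$2$ set $\Gamma_{K}$ to a birational self-map of the projective variety $K^{n}$ and invokes Oguiso's result there, in its intended setting, to write $g=s\circ(g_{1}\times\cdots\times g_{n})$ and normalise $s={\rm id}$ using ${\mathcal S}_{n}\subset G$. The remaining work --- which is the step your outline has no counterpart for --- is Proposition \ref{dfn:12}: because $g$ preserves $\Gamma_{K}$, which contains not only the diagonals $\{x_{i}=x_{j}\}$ but also the twisted diagonals $\{\sigma(x_{i})=x_{j}\}$, each $g_{i}$ equals $g_{1}$ or $g_{1}\circ\sigma$, and a comparison of the two cosets of liftings forces $g_{1}\circ\sigma=\sigma\circ g_{1}$; only then does $g_{1}$ descend to an automorphism of $E$ inducing $f$. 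If you want to salvage your route through $E^{n}$, you would have to prove the lifting claim of Step 2 directly, and that amounts to reproving the content of Proposition \ref{dfn:12} in different language.
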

Finally, we compute the number of isomorphism class of the Hilbert schemes of $n$ points of Enriques surfaces which have $X$ as the universal covering space when we fixed one $X$.
%Let $D$ be a bounded symmetric domain of type IV and dimension $10$, $H$ a totally geodesic divisor in $D$, and $\Gamma $ is a certain arithmetic group. 
%The isomorphism classes of  Enriques surfaces are in one to one correspondence
%with $(D \setminus H)/\Gamma $. 
%For details on this subject we refer the reader to Namikawa $[\ref{bio:200}]$.
%Here an automorphism of smooth surface is numerically trivial if it acts on cohomology ring with coefficients in ${\mathbb C}$ as identity.
%Let $P$ be the subset of $(D \setminus  H)/\Gamma $ consisting of the isomorphism classes of  Enriques surfaces which do not have numerically trivial involutions. 
%By Mukai $[\ref{bio:40},\,{\rm Proposition}\,18\ {\rm and }\ {\rm Corollary}\,19]$ and Mukai-Namikawa $[\ref{bio:9},\,{\rm page}\,388]$, 
%$P$ is cloesd subsets of dimension $2$ of the $(D \setminus  H)/\Gamma $.
\begin{thm}\label{thm:3}
Let $E$ and $E'$ be two Enriques surfaces, $E^{[n]}$ and $E'^{[n]}$ the Hilbert scheme of $n$ points of $E$ and $E'$, $X$ and $X'$ the universal covering space of $E^{[n]}$ and $E'^{[n]}$, and $n\geq 3$.
If $X\cong X'$, then $E^{[n]}\cong E'^{[n]}$, i.e. when we fix $X$, then there is just one isomorphism class of the Hilbert schemes of $n$ points of Enriques surfaces such that they have it as the universal covering space.
\end{thm}
Our proof is based on Theorem $1.2$ and the study of the action of the covering involutions on $H^{2}(X,{\mathbb C})$.

This is the result that is greatly different from the result of Ohashi \\
$($See $[\ref{bio:10},\,{\rm Theorem}\,0.1])$ that, for any nonnegative integer $l$, there exists a $K3$ surface with exactly $2^{l+10}$ distinct Enriques quotients. In particular, there does not exist a universal bound for the number of distinct Enriques quotients of a $K3$ surface.
Here we will call two Enriques quotients of a K3 surface distinct if they are not isomorphic to each other.
\begin{mar}\label{dfn:1000}
{\rm When} n=2, {\rm I\ do\ not\ count\ the\ number\ of\ isomorphism\ classes\ of\ the Hilbert\ schemes\ of} n {\rm points\ of\ Enriques\ surfaces\ which\ has} X {\rm as\ the\ universal\ covering\ space\ when\ we\ fix\ one} X.
\end{mar}

\section{Preliminaries}
A $K3$ surface $K$ is a compact complex surface with $K_{K} \sim 0$ and $H^{1}(K,
{\mathcal O}_{K})=0$. An Enriques surface $E$ is a compact complex surface with
 $H^{1}(E, {\mathcal O}_{E} )=0$,
$H^{2}(E, {\mathcal O}_{E})=0$, $K_{E}\not\sim 0$, and $2K_{E}\sim 0$. The universal covering of an Enriques surface is
a $K3$ surface.
A Calabi-Yau manifold $X$ is an $n$-dimensional compact k\"ahler manifold such that it is simply connected, there is no holomorphic $k$-form on $X$
for $ 0 < k < n$ and there is a nowhere vanishing holomorphic $n$-form on $X$.

Let $S$ be a nonsingular surface,
$S^{[n]}$ the Hilbert scheme of $n$ points of $S$, $\pi _{S}:S^{[n]}\rightarrow S^{(n)}$ the Hilbert-Chow morphism, and
$p_{S}:S^{n}\rightarrow S^{(n)}$ the natural projection. We denote by $D_{S}$ the exceptional divisor of $\pi _{S}$.
Note that $S^{[n]}$ is smooth of dim$_{{\mathbb C}}S^{[n]}=2n$.
Let $\Delta _{S}^{n}$ be the set of $n$-uples $(x_{1},\ldots ,x_{n})\in S^{n}$ with at least two $x_{i}$'s equal,
$S^{n}_{\ast }$ the set of $n$-uples $(x_{1},\ldots ,x_{n})\in S^{n}$ with at most two $x_{i}$'s equal.
We put 
\[ S^{(n)}_{\ast }:=p_{S}(S^{n}_{\ast }), \] 
\[ \Delta _{S}^{(n)}:=p_{S}(\Delta _{S}^{n}), \]
\[ S^{[n]}_{\ast }:=\pi _{S}^{-1}(S^{(n)}_{\ast }), \]
\[ \Delta _{S\ast }^{n}:=\Delta _{S}^{n}\cap S^{n}_{\ast }, \]
\[ \Delta _{S\ast }^{(n)}:=p_{S}(\Delta _{S\ast }^{n}),\ {\rm and}\]
\[ F_{S}:=S^{[n]}\setminus S^{[n]}_{\ast }. \]
Then we have ${\rm Blow}_{\Delta _{S\ast }^{n}}S^{n}_{\ast }/{\mathcal S}_{n}\simeq S^{[n]}_{\ast }$,
$F_{S}$ is an analytic closed subset, and its codimension is $2$ in $S^{[n]}$ by Beauville $[\ref{bio:6},\,{\rm page}\,767$-$768]$.
Here ${\mathcal S}_{n}$ is the symmetric group of degree $n$ which acts naturally on $S^{n}$ by permuting of the factors.

Let $E$ be an Enriques surface, and $E^{[n]}$ the Hilbert scheme of $n$ points of $E$.
By Oguiso and Schr\"oer $[\ref{bio:3},\,{\rm Theorem}\,3.1]$, $E^{[n]}$ has a Calabi-Yau manifold $X$ as the universal covering space $\pi :X\rightarrow E^{[n]}$ of degree $2$.
Let $\mu :K\rightarrow E$ be the universal covering space of $E$ where $K$ is a $K3$ surface,
$S_{K}$ the pullback of $\Delta _{E}^{(n)}$ by the morphism 
\[ \mu ^{(n)}:K^{(n)}\ni [(x_{1},\ldots ,x_{n})]\mapsto [(\mu (x_{1}),\ldots ,\mu (x_{n}))]\in E^{(n)}. \]
Then we get a $2^{n}$-sheeted unramified covering space 
\[ \mu ^{(n)}|_{K^{(n)}\backslash S_{K} }:K^{(n)}\backslash S_{K} \rightarrow E^{(n)}\backslash \Delta _{E}^{(n)}.\]
Furthermore, let $\Gamma _{K}$ be the pullback of $S_{K}$ by natural projection $p_{K}:K^{n}\rightarrow K^{(n)}$.
Since $\Gamma _{K}$ is an algebraic closed set with codimension $2$,
then 
\[ \mu ^{(n)}\circ p_{K}:K^{n}\backslash \Gamma _{K}\rightarrow E^{(n)}\backslash \Delta _{E}^{(n)} \]
is the $2^{n}n!$-sheeted universal covering space. 
Since $E^{[n]}\backslash D_{E}=E^{(n)}\backslash \Delta _{E}^{(n)}$ where $D_{E}=\pi _{E}^{-1}(\Delta _{E}^{(n)})$, we regard the universal covering space $\mu ^{(n)}\circ p_{K}:K^{n}\backslash \Gamma _{K} \rightarrow E^{(n)}\backslash \Delta _{E}^{(n)}$ 
as the universal covering space of $E^{[n]}\setminus D_{E}$:
\[ \mu ^{(n)}\circ p_{K}:K^{n}\backslash \Gamma _{K} \rightarrow E^{[n]}\backslash D_{E}. \]
Since $\pi :X\setminus \pi ^{-1}(D_{E})\rightarrow E^{[n]}\setminus D_{E}$ is a covering space and $\mu ^{(n)}\circ p_{K}:K^{n}\setminus \Gamma _{K}\rightarrow E^{[n]}\setminus D_{E}$ is the universal covering space,
there is a morphism 
\[ \omega :K^{n}\setminus \Gamma _{K}\rightarrow X\setminus \pi ^{-1}(D_{E}) \]
such that $\omega :K^{n}\setminus \Gamma _{K}\rightarrow X\setminus \pi ^{-1}(D_{E})$ is the universal covering space and $\mu ^{(n)}\circ p_{K}=\pi \circ \omega $:
$$
\xymatrix{
K^{n}\setminus \Gamma _{K} \ar[dr]_{\mu ^{(n)}\circ p_{K}} \ar[r]^{\omega } &X\setminus \pi ^{-1}(D_{E}) \ar[d]^{\pi } \\
&E^{[n]}\setminus D_{E}.
}
$$
We denote the covering transformation group of $\pi \circ \omega $
% and $\omega $ respectively 
by:  
\[ G:=\{g\in {\rm Aut}(K^{n}\setminus \Gamma _{K}):\pi \circ \omega \circ g=\pi \circ \omega \}. \] 
%\[ H:=\{g\in {\rm Aut}(K^{n}\setminus \Gamma _{K}):\omega \circ g=\omega \}. \] 
Then $G$ is of order $2^{n}.n!$, since deg$(\mu ^{(n)}\circ p_{K})=2^{n}.n!$. 
Let $\sigma $ be the covering involution of $\mu :K\rightarrow E$, and for
\[ 1\leq k\leq n,\ 1\leq i_{1}<\cdots <i_{k}\leq n \] 
we define automorphisms $\sigma _{i_{1}\ldots i_{k}}$ of $K^{n}$ 
by following.
For $x=(x_{i})_{i=1}^{n}\in K^{n}$,
$$
the\ j\mathchar`-th\ component\ of\ \sigma _{i_{1}\ldots i_{k}}(x)=
\begin{cases} 
  \sigma (x_{j})  & j\in \{i_{1},\cdots ,i_{k}\} \\
  x_{j} &  j\not\in \{i_{1},\cdots ,i_{k}\}.
\end{cases}
$$
Then ${\mathcal S}_{n}\subset G$, and $\{\sigma _{i_{1}\ldots i_{k}}\}_{1\leq k\leq n,\ 1\leq i_{1}<\ldots <i_{k}\leq n}\subset G$. 
Let $H$ be the subgroup of $G$ generated by ${\mathcal S}_{n}$ and $\{\sigma _{ij}\}_{1\leq i<j\leq n}.$
\begin{pro}\label{dfn:3}
$G$ is generated by ${\mathcal S}_{n}$ and $\{\sigma _{i_{1}\ldots i_{k}}\}_{1\leq k\leq n,\ 1\leq i_{1}<\ldots <i_{k}\leq n}$. Moreover any element is of the form $s\circ t$ where $s\in {\mathcal S}_{n}$, $t\in \{\sigma _{i_{1}\ldots i_{k}}\}_{1\leq k\leq n,\ 1\leq i_{1}<\ldots <i_{k}\leq n}$.
\end{pro}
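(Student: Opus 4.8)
The plan is to exploit the factorization of the universal covering $\mu^{(n)}\circ p_{K}$ through the product map. Writing $\mu^{n}\colon K^{n}\to E^{n}$ for the product of $n$ copies of $\mu$ and $p_{E}\colon E^{n}\to E^{(n)}$ for the quotient by $\mathcal{S}_{n}$, one has $\mu^{(n)}\circ p_{K}=p_{E}\circ \mu^{n}$. Since $\Gamma_{K}=(\mu^{(n)}\circ p_{K})^{-1}(\Delta_{E}^{(n)})=(\mu^{n})^{-1}(\Delta_{E}^{n})$ and $\mu$ is étale of degree $2$, the map $\mu^{n}$ restricts to a $2^{n}$-sheeted unramified covering $K^{n}\setminus\Gamma_{K}\to E^{n}\setminus\Delta_{E}^{n}$, and $p_{E}$ restricts to the free (hence Galois) $n!$-sheeted covering $E^{n}\setminus\Delta_{E}^{n}\to E^{(n)}\setminus\Delta_{E}^{(n)}$ with deck group $\mathcal{S}_{n}$, their composite being exactly $\mu^{(n)}\circ p_{K}$. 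First I would record that the deck group of $\mu^{n}|_{K^{n}\setminus\Gamma_{K}}$ is precisely $\{\sigma_{I}:I\subseteq\{1,\dots,n\}\}$, where $\sigma_{I}$ applies $\sigma$ in the coordinates indexed by $I$: indeed $\mathrm{Deck}(\mu^{n})=\mathrm{Deck}(\mu)^{n}\cong(\mathbb{Z}/2)^{n}$ and $\mathrm{Deck}(\mu)=\langle\sigma\rangle$. This set is exactly $\{\mathrm{id}\}\cup\{\sigma_{i_{1}\dots i_{k}}\}_{1\le k\le n,\,1\le i_{1}<\dots<i_{k}\le n}$.

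Next I would prove the normal form. Let $g\in G$. Then $p_{E}\circ(\mu^{n}\circ g)=\mu^{(n)}\circ p_{K}\circ g=\mu^{(n)}\circ p_{K}=p_{E}\circ\mu^{n}$, so $\mu^{n}\circ g$ and $\mu^{n}$ are two lifts of the same map $K^{n}\setminus\Gamma_{K}\to E^{(n)}\setminus\Delta_{E}^{(n)}$ along the Galois covering $p_{E}$. As $K^{n}\setminus\Gamma_{K}$ is connected (being the complement of the codimension-$2$ set $\Gamma_{K}$ in the connected manifold $K^{n}$), these two lifts differ by a deck transformation of $p_{E}$: there is $s\in\mathcal{S}_{n}$ with $\mu^{n}\circ g=s\circ\mu^{n}=\mu^{n}\circ s$, the last equality because $\mu^{n}$ is $\mathcal{S}_{n}$-equivariant. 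Hence $\mu^{n}\circ(s^{-1}\circ g)=\mu^{n}$, so $t:=s^{-1}\circ g$ lies in $\mathrm{Deck}(\mu^{n}|_{K^{n}\setminus\Gamma_{K}})=\{\sigma_{I}\}$, and $g=s\circ t$ with $t$ equal to $\mathrm{id}$ or to some $\sigma_{i_{1}\dots i_{k}}$. In particular $G$ is generated by $\mathcal{S}_{n}$ and $\{\sigma_{I}\}$, hence by $\mathcal{S}_{n}$ and $\{\sigma_{i_{1}\dots i_{k}}\}_{1\le k\le n,\,1\le i_{1}<\dots<i_{k}\le n}$, which proves both assertions. (This is consistent with $|G|=2^{n}\cdot n!$, as $\mathcal{S}_{n}\cap\{\sigma_{I}\}=\{\mathrm{id}\}$: a nontrivial coordinate permutation cannot coincide with any $\sigma_{I}$, seen by evaluating at a point of $K^{n}\setminus\Gamma_{K}$ whose coordinates are distinct with distinct images in $E$.)

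There is no deep difficulty here; the only points requiring care are (i) checking that $\mu^{n}$ genuinely restricts to a covering over $K^{n}\setminus\Gamma_{K}$, which rests on the set-theoretic identity $\Gamma_{K}=(\mu^{n})^{-1}(\Delta_{E}^{n})$ together with $\mu$ being unramified, and (ii) identifying $\mathrm{Deck}(\mu^{n}|_{K^{n}\setminus\Gamma_{K}})$ with the full $\{\sigma_{I}\}$ rather than a proper subgroup. Once these are in place, the argument is the standard uniqueness-of-lifts principle for covering spaces, so I would expect (i)–(ii) to be the main—though modest—obstacle.
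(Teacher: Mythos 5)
Your proof is correct, but it takes a genuinely different route from the paper's. The paper argues by counting: it first observes that the $2^{n}\cdot n!$ products $s\circ t$ with $s\in {\mathcal S}_{n}$ and $t\in \{\sigma _{i_{1}\ldots i_{k}}\}$ are pairwise distinct (by looking at how each factor permutes and acts on the components), notes that all of them lie in $G$ because ${\mathcal S}_{n}\subset G$ and $\{\sigma _{i_{1}\ldots i_{k}}\}\subset G$, and then concludes from $|G|=2^{n}\cdot n!$ (which comes from $\pi \circ \omega $ being the universal covering of degree $2^{n}\cdot n!$) that these products exhaust $G$. You instead work structurally through the intermediate covering: factoring $\mu ^{(n)}\circ p_{K}=p_{E}\circ \mu ^{n}$, identifying ${\rm Deck}(\mu ^{n}|_{K^{n}\setminus \Gamma _{K}})$ with $({\mathbb Z}/2)^{n}=\{\sigma _{I}\}$, and pushing an arbitrary $g\in G$ down the tower to extract $s\in {\mathcal S}_{n}$ by uniqueness of lifts along the normal covering $p_{E}$, leaving $s^{-1}\circ g$ in the deck group of $\mu ^{n}$. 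Your approach buys a direct derivation of the normal form that does not lean on already knowing $|G|$, and it makes explicit where each factor comes from (the two stages of the covering tower); the paper's approach is shorter once the order of $G$ is granted, but hides the verification that the products are distinct in an appeal to ``paying attention to the permutation of component,'' which is essentially the same observation you make parenthetically when checking ${\mathcal S}_{n}\cap \{\sigma _{I}\}=\{{\rm id}\}$. Both arguments need the connectedness of $K^{n}\setminus \Gamma _{K}$ and the normality of the coverings involved, which you state explicitly and the paper leaves implicit.
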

\begin{proof}
If $(s,t)=(s',t')$ for $s,s'\in {\mathcal S}_{n}$ and $t,t'\in \{\sigma _{i_{1}\ldots i_{k}}\}_{1\leq k\leq n,\ 1\leq i_{1}<\ldots <i_{k}\leq n}$, then we have $s=s'$ and $t=t'$ by paying attention to the permutation of component.
As $|{\mathcal S}_{n}|=n!$, and $|\{\sigma _{i_{1}\ldots i_{k}}\}_{1\leq k\leq n,\ 1\leq i_{1}<\ldots <i_{k}\leq n}|=2^{n}$,
$G$ is generated by ${\mathcal S}_{n}$ and $\{\sigma _{i_{1}\ldots i_{k}}\}_{1\leq k\leq n,\ 1\leq i_{1}<\ldots <i_{k}\leq n}$.
\end{proof}
\begin{pro}\label{dfn:4}
$|H|=2^{n-1}.n!$.
\end{pro}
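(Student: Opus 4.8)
The plan is to identify $G$ with a wreath product and to recognise $H$ as the subgroup attached to the \emph{even} part of the sign factor; once that is done, $|H|$ drops out of a computation over ${\mathbb Z}/2{\mathbb Z}$. First I would record the structure of $G$. Set $N:=\{{\rm id}\}\cup\{\sigma_{i_{1}\ldots i_{k}}\}_{1\le k\le n,\ 1\le i_{1}<\cdots<i_{k}\le n}$. By Proposition~\ref{dfn:3} this is a set of $2^{n}$ elements, and it is immediate from the definition of the $\sigma_{i_{1}\ldots i_{k}}$ (composing two of them takes the symmetric difference of the index sets, and $\sigma$ is an involution of $K$) that $N$ is an abelian subgroup of $G$ in which every nontrivial element has order $2$; sending $\sigma_{i_{1}\ldots i_{k}}$ to the indicator vector $e_{i_{1}}+\cdots+e_{i_{k}}$ of $\{i_{1},\ldots,i_{k}\}$ identifies $N$ with $({\mathbb Z}/2{\mathbb Z})^{n}$, with $\sigma_{ij}$ corresponding to $e_{i}+e_{j}$. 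A componentwise check gives $s\circ\sigma_{i_{1}\ldots i_{k}}\circ s^{-1}=\sigma_{s(i_{1})\ldots s(i_{k})}$ for $s\in{\mathcal S}_{n}$, so ${\mathcal S}_{n}$ normalises $N$ and acts on it by permuting coordinates; combined with the unique-factorisation statement of Proposition~\ref{dfn:3}, this exhibits $G$ as the internal semidirect product of $N$ by ${\mathcal S}_{n}$, and in particular $N\cap{\mathcal S}_{n}=\{{\rm id}\}$.

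Next I would analyse $H$. Let $M\le N$ be the subgroup generated by $\{\sigma_{ij}\}_{1\le i<j\le n}$, which under the above identification is the subgroup of $({\mathbb Z}/2{\mathbb Z})^{n}$ spanned by the vectors $e_{i}+e_{j}$. Since ${\mathcal S}_{n}$ carries $\sigma_{ij}$ to $\sigma_{s(i)s(j)}$ it normalises $M$, hence $H=\langle{\mathcal S}_{n},M\rangle=M\cdot{\mathcal S}_{n}$ is a subgroup in which $M$ is normal; moreover $M\cap{\mathcal S}_{n}\subseteq N\cap{\mathcal S}_{n}=\{{\rm id}\}$, so $|H|=|M|\cdot n!$. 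It therefore suffices to prove $|M|=2^{n-1}$.

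For this, consider the homomorphism $\varepsilon\colon({\mathbb Z}/2{\mathbb Z})^{n}\to{\mathbb Z}/2{\mathbb Z}$, $v\mapsto\sum_{i}v_{i}$, whose kernel $N_{0}$ has order $2^{n-1}$. Each generator $e_{i}+e_{j}$ lies in $N_{0}$, so $M\subseteq N_{0}$; conversely, any $v\in N_{0}$ has even-sized support $\{a_{1}<\cdots<a_{2m}\}$ and equals $(e_{a_{1}}+e_{a_{2}})+\cdots+(e_{a_{2m-1}}+e_{a_{2m}})\in M$. Thus $M=N_{0}$, $|M|=2^{n-1}$, and $|H|=2^{n-1}\cdot n!$.

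The only step that needs any care is the semidirect-product bookkeeping: one must be sure that passing to the subgroup generated by ${\mathcal S}_{n}$ and the $\sigma_{ij}$ produces exactly the even sign vectors and no odd one (equivalently $\sigma_{i}\notin H$), which is precisely what $M\subseteq\ker\varepsilon$ guarantees. Beyond that the argument is a routine computation, so I anticipate no real obstacle.
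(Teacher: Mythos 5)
Your proof is correct and takes essentially the same route as the paper: both arguments reduce to the observation that the $\sigma_{ij}$ generate exactly the index-two subgroup of even-support sign vectors inside $({\mathbb Z}/2{\mathbb Z})^{n}$, so that $H$ has index $2$ in $G=({\mathbb Z}/2{\mathbb Z})^{n}\rtimes {\mathcal S}_{n}$. Your write-up is in fact more complete than the paper's, which only asserts $\sigma_{i}\notin H$ and the relation $(i,j)\circ \sigma_{i}\circ (i,j)=\sigma_{j}$ before concluding $|G/H|=2$; your identification of $\langle \sigma_{ij}\rangle$ with $\ker (\varepsilon )$ supplies the details that step leaves implicit.
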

\begin{proof}
$H$ is generated by ${\mathcal S}_{n}$ and $\{\sigma _{ij}\}_{1\leq i<j\leq n}$.  By paying attention to the permutation of component, we have $\sigma _{i}\not\in H$ for all $i$.
For arbitrary $j$, $(i,j)\circ \sigma _{i}\circ (i,j)=\sigma _{j}$. Since ${\mathcal S}_{n}\subset H$, and  Proposition $2.1$, we obtain $|G/H|=2$, i.e. $|H|=2^{n-1}.n!$.
%Suppose $\sigma _{i,j}\notin H$. Since , there is $d\in H$ such that $\sigma _{i}=\sigma _{i,j}\circ d$. So we have 
%$\sigma _{j}\in H$, a contradiction. Hence we have $\sigma _{i,j}\in H$ for all $1\leq i<j\leq n$. Since $|H|=2^{n-1}n!$ and $\langle\{\sigma _{ij}\}_{1\leq i<j\leq n}\rangle=\langle\sigma _{1j}|2\leq j\leq n\rangle\simeq ({\mathbb Z}/2{\mathbb Z})^{n-1}$. $H$ is generated by ${\mathcal S}_{n}$ and $\{\sigma _{i,j}\}_{1\leq i<j\leq n}$ as in the proof of Proposition $2.1$.
\end{proof}

We put
\[ K^{n}_{\ast \mu }:=({\mu ^{n}})^{-1}(E^{n}_{\ast }), \] 
where $\mu ^{n}:K^{n}\ni (x_{i})_{i=1}^{n}\mapsto (\mu (x_{i}))_{i=1}^{n}\in E^{n}$. Recall that $\mu :K\rightarrow E$ the universal covering with $\sigma $ the covering involution.
We further put 
\[ T_{\ast \mu \,ij}:=\{ (x_{l})_{l=1}^{n}\in K^{n}_{\ast \mu }: \sigma (x_{i})=x_{j} \}, \]
\[ \Delta _{K\ast \mu \,ij}:=\{ (x_{l})_{l=1}^{n}\in K^{n}_{\ast \mu }: x_{i}=x_{j} \}, \]
\[ T_{\ast \mu }:=\bigcup _{1\leq i<j\leq n}T_{\ast \mu \,i,j},\ {\rm and}\]
\[ \Delta _{K\ast \mu }:=\bigcup _{1\leq i<j\leq n}\Delta _{K\ast \mu \,i,j}. \]
By the definition of $K^{n}_{\ast \mu }$, $H$ acts on $K^{n}_{\ast \mu }$, and by the definition of $\Delta _{K\ast \mu }$ and $T_{\ast \mu }$, we have $\Delta _{K\ast \mu }\cap T_{\ast \mu }=\emptyset $.
\begin{lem}\label{dfn:20}
For $t\in H$ and $1\leq i<j\leq n$, if $t\in H$ has a fixed point on $\Delta _{K\ast \mu \,ij}$, then  $t=(i,j)$ or $t={\rm id}_{K^{n}}$.
\end{lem}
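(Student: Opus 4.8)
The plan rests on two facts. First, the covering involution $\sigma$ of $\mu\colon K\to E$ acts on the $K3$ surface $K$ \emph{without fixed points}, because $\mu$ is an unramified double cover. Second, a point of $\Delta_{K\ast\mu\,ij}$ is as generic as possible with respect to $\mu$: if $x=(x_l)_{l=1}^{n}\in\Delta_{K\ast\mu\,ij}$ then $x_i=x_j$, and since $x\in K^{n}_{\ast\mu}=(\mu^{n})^{-1}(E^{n}_{\ast})$ the tuple $(\mu(x_1),\dots,\mu(x_n))$ lies in $E^{n}_{\ast}$; hence the value $\mu(x_i)=\mu(x_j)$ is attained exactly twice among $\mu(x_1),\dots,\mu(x_n)$, while every other value $\mu(x_l)$ is attained exactly once. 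Recall finally, from Propositions $2.1$ and $2.2$ and their proofs, that $H$ consists exactly of the elements $t=s\circ\sigma_{I}$ with $s\in{\mathcal S}_{n}$ and $\sigma_{I}:=\sigma_{i_1\ldots i_k}$ for a subset $I=\{i_1<\cdots<i_k\}\subseteq\{1,\dots,n\}$ of even cardinality.

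Suppose $x=(x_l)_{l=1}^{n}\in\Delta_{K\ast\mu\,ij}$ is fixed by $t=s\circ\sigma_{I}$. Writing $t(x)=x$ out componentwise, and recalling that ${\mathcal S}_{n}$ permutes the factors, one gets for every $l$ the relation
\[ x_{s(l)}=\sigma^{\,\epsilon_l}(x_l),\qquad\text{where }\epsilon_l=1\text{ if }l\in I\text{ and }\epsilon_l=0\text{ otherwise.}\]
I would then run this relation around each cycle $C=(l_1\,l_2\,\cdots\,l_p)$ of $s$. Composing the relations once around $C$ yields $x_{l_1}=\sigma^{\,m}(x_{l_1})$ with $m\equiv|I\cap C|\pmod 2$, so that $|I\cap C|$ must be even because $\sigma$ has no fixed point; and each $x_{l_r}$ equals $x_{l_1}$ or $\sigma(x_{l_1})$, hence $\mu(x_{l_1})=\cdots=\mu(x_{l_p})$. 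By the genericity statement above, a cycle of length $\geq 3$ is impossible, a cycle of length $2$ must equal $\{i,j\}$, and all other cycles are singletons; therefore $s={\rm id}$ or $s=(i,j)$.

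It remains to pin down $I$. If $s={\rm id}$, all cycles are singletons, so evenness of $|I\cap C|$ forces $I=\emptyset$ and $t={\rm id}_{K^{n}}$. If $s=(i,j)$, the singleton cycles force $I\subseteq\{i,j\}$, and evenness on the cycle $\{i,j\}$ leaves $I=\emptyset$ or $I=\{i,j\}$. The element $t=(i,j)\circ\sigma_{ij}$ (the case $I=\{i,j\}$) does lie in $H$, but it has no fixed point on $\Delta_{K\ast\mu\,ij}$: the cycle relation on $\{i,j\}$ reads $x_j=\sigma(x_i)$, whereas $x_i=x_j$ there, forcing $x_i=\sigma(x_i)$, which is impossible. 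Hence $I=\emptyset$ and $t=(i,j)$.

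The computation is short once it is set up; I expect the two points needing care to be the precise formulation of the multiplicity statement for points of $K^{n}_{\ast\mu}$ (this is exactly where the structure of $E^{n}_{\ast}$ — at most one coincident pair among the coordinates — enters), and the $(i,j)\circ\sigma_{ij}$ case, which is the reason the conclusion is not immediate from the parity condition on $|I\cap C|$ alone.
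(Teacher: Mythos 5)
Your proof is correct and follows essentially the same route as the paper's: reduce $t$ to the normal form $s\circ\sigma_{I}$ of Proposition 2.1 and then exploit the two facts that $\sigma$ is fixed-point-free and that a point of $E^{n}_{\ast }$ has at most one coincident pair of coordinates. The only organizational difference is that the paper kills the $\sigma$-part in one stroke by observing that $\{x_{1},\ldots ,x_{n}\}\cap \{\sigma (x_{1}),\ldots ,\sigma (x_{n})\}=\emptyset$ on $\Delta _{K\ast \mu \,ij}$, whereas you first pin down $s$ by the cycle argument and then eliminate $I$ (treating $(i,j)\circ \sigma _{ij}$ separately); both arguments rest on the same ingredients.
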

\begin{proof}
Let $t\in H$ be an element of $H$ where there is an element $\tilde{x}=(\tilde{x}_{i})_{i=1}^{n}\in \Delta _{K\ast \mu \,ij}$ such that $t(\tilde{x})=\tilde{x}$.
By Proposition $\ref{dfn:3}$, for $t\in H$, there are two elements $\sigma _{i_{1},\cdots ,i_{k}}\in \{\sigma _{i_{1}\ldots i_{k}}\}_{1\leq k\leq n,\ 1\leq i_{1}<\ldots <i_{k}\leq n}$ and $(j_{1},\cdots ,j_{l})\in {\mathcal S}_{n}$ such that 
\[ t=(j_{1},\cdots ,j_{l})\circ \sigma _{i_{1},\cdots ,i_{k}}. \]
From the definition of $\Delta _{K\ast \mu \,ij}$, for $(x_{l})_{l=1}^{n}\in \Delta _{K\ast \mu \,ij}$, 
\[ \{ x_{1},\ldots ,x_{n} \}\cap \{\sigma (x_{1}),\ldots ,\sigma (x_{n}) \}=\emptyset. \] 
Suppose $\sigma _{i_{1},\cdots ,i_{k}}\not={\rm id}_{K^{n}}$. Since $t(\tilde{x})=\tilde{x}$,
we have 
\[ \{ \tilde{x}_{1},\ldots ,\tilde{x}_{n} \}\cap \{\sigma (\tilde{x}_{1}),\ldots ,\sigma (\tilde{x}_{n}) \}\not=\emptyset, \] 
a contradiction. Thus we have $t=(j_{1},\cdots ,j_{l})$. Similarly from the definition of $\Delta _{K\ast \mu \,ij}$,
for $(x_{l})_{l=1}^{n}\in \Delta _{K\ast \mu \,ij}$,
if $x_{s}=x_{t}$ $(1\leq s<t\leq n)$, then $s=i$ and $t=j$. 
Thus we have $t=(i,j)$ or $t={\rm id}_{K^{n}}$. 
\end{proof}
\begin{lem}\label{dfn:21}
For $t\in H$ and $1\leq i<j\leq n$, if $t\in H$ has a fixed point on $T_{\ast \mu \,ij}$, then  $t=\sigma _{i,j}\circ (i,j)$ or $t={\rm id}_{K^{n}}$.
\end{lem}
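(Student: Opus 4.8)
The plan is to run the same argument as in the proof of Lemma~\ref{dfn:20}, with the diagonal condition $x_i=x_j$ replaced by the twisted condition $\sigma(x_i)=x_j$. First I would pin down the combinatorial constraint coming from membership in $K^n_{\ast\mu}$: if $\tilde x=(\tilde x_l)_{l=1}^n\in T_{\ast\mu\,ij}$, then $\sigma(\tilde x_i)=\tilde x_j$ forces $\mu(\tilde x_i)=\mu(\tilde x_j)$, and this is the unique coincidence permitted among the points $\mu(\tilde x_1),\dots,\mu(\tilde x_n)\in E$. Since $\sigma$ has no fixed point, $\tilde x_i=\tilde x_j$ is impossible (it would give $\sigma(\tilde x_i)=\tilde x_i$), so the $\tilde x_l$ are pairwise distinct; likewise $\tilde x_k=\sigma(\tilde x_l)$ can hold only for $\{k,l\}=\{i,j\}$, because otherwise $\mu(\tilde x_k)=\mu(\tilde x_l)$ would be a second coincidence, or, if $\{k,l\}$ meets $\{i,j\}$, would produce a point of multiplicity $\geq 3$ in $E^n$.

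Next I would invoke Proposition~\ref{dfn:3} to write $t=s\circ\sigma_I$ with $s\in{\mathcal S}_n$ and $I\subseteq\{1,\dots,n\}$, and take $\tilde x\in T_{\ast\mu\,ij}$ with $t(\tilde x)=\tilde x$. Because $\mu\circ\sigma=\mu$, applying $\mu$ componentwise intertwines $t$ with $s$, so $\mu^n(\tilde x)=s\bigl(\mu^n(\tilde x)\bigr)$; hence $s$ stabilizes every level set of the tuple $\bigl(\mu(\tilde x_1),\dots,\mu(\tilde x_n)\bigr)\in E^n$. By the first step the only nontrivial level set is $\{i,j\}$, so $s=\mathrm{id}$ or $s=(i,j)$.

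The proof then splits into these two cases. If $s=\mathrm{id}$, the equality $t(\tilde x)=\tilde x$ reads $\sigma^{\varepsilon_m}(\tilde x_m)=\tilde x_m$ for all $m$, where $\varepsilon_m=1$ exactly when $m\in I$; fixed point freeness of $\sigma$ gives $I=\emptyset$, so $t=\mathrm{id}_{K^n}$. If $s=(i,j)$, then comparing the $m$-th components for $m\notin\{i,j\}$ forces $m\notin I$, so $I\subseteq\{i,j\}$; and comparing the $i$-th and $j$-th components, using $\sigma(\tilde x_i)=\tilde x_j$ (equivalently $\tilde x_i=\sigma(\tilde x_j)$) together with fixed point freeness, forces both $i\in I$ and $j\in I$. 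Hence $I=\{i,j\}$ and $t=(i,j)\circ\sigma_{ij}=\sigma_{ij}\circ(i,j)$, the last equality because $(i,j)$ and $\sigma_{ij}$ commute.

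I do not expect a serious obstacle: this is a companion of Lemma~\ref{dfn:20}, and the argument reduces to a short case check. The only point requiring care is the bookkeeping of which coordinates carry a factor of $\sigma$ once $s$ is the transposition $(i,j)$, since that is what pins down $I=\{i,j\}$ and hence the precise form $t=\sigma_{ij}\circ(i,j)$ rather than a near miss such as $(i,j)$ or $\sigma_{kl}\circ(i,j)$ with $\{k,l\}\neq\{i,j\}$; and, exactly as in Lemma~\ref{dfn:20}, one must check that the description of $K^n_{\ast\mu}$ genuinely allows the single twisted coincidence $\sigma(\tilde x_i)=\tilde x_j$ and forbids any further relation among the $\tilde x_l$.
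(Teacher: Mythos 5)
Your proof is correct, but it takes a different route from the paper's. You argue directly: decompose $t=s\circ\sigma_I$ via Proposition~\ref{dfn:3}, push the fixed-point equation down to $E^n$ with $\mu\circ\sigma=\mu$ to force $s\in\{\mathrm{id},(i,j)\}$, and then use fixed-point-freeness of $\sigma$ coordinate by coordinate to pin down $I$. The paper instead reduces the statement to Lemma~\ref{dfn:20} by conjugating with the isomorphism $(j,j+1)\circ\sigma_{i,j}\circ(j,j+1)\colon\Delta_{K\ast\mu\,ij}\to T_{\ast\mu\,ij}$: a fixed point of $t$ on $T_{\ast\mu\,ij}$ gives a fixed point of the conjugate on $\Delta_{K\ast\mu\,ij}$, Lemma~\ref{dfn:20} identifies the conjugate as $(i,j)$ or $\mathrm{id}$, and an explicit computation of the conjugation yields $t=\sigma_{i,j}\circ(i,j)$. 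The paper's reduction is slicker in that it reuses Lemma~\ref{dfn:20} verbatim, but it requires verifying that the conjugating map really sends $\Delta_{K\ast\mu\,ij}$ into $T_{\ast\mu\,ij}$ and carrying out the product manipulation (and, as written, it implicitly assumes $j<n$ so that $(j,j+1)$ makes sense); your direct argument avoids both at the cost of essentially repeating the coincidence analysis from Lemma~\ref{dfn:20} in the twisted setting. Both proofs rest on the same reading of $E^n_\ast$, namely that a tuple there admits at most one coincident pair of coordinates, which you correctly flag as the point needing care.
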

\begin{proof}
Let $t\in H$ be an element of $H$ where there is an element $\tilde{x}=(\tilde{x}_{i})_{i=1}^{n}\in T_{K\ast \mu \,ij}$ such that $t(\tilde{x})=\tilde{x}$.
By Proposition $\ref{dfn:3}$, for $t\in H$, there are two elements $\sigma _{i_{1},\cdots ,i_{k}}\in \{\sigma _{i_{1}\ldots i_{k}}\}_{1\leq k\leq n,\ 1\leq i_{1}<\ldots <i_{k}\leq n}$ and $(j_{1},\cdots ,j_{l})\in {\mathcal S}_{n}$ such that
\[ t=(j_{1},\cdots ,j_{l})\circ \sigma _{i_{1},\cdots ,i_{k}}. \]
Since $(j,j+1)\circ \sigma _{i,j}\circ (j,j+1):\Delta _{K\ast \mu \,ij}\rightarrow T_{\ast \mu \,ij}$ is an isomorphism,
and by Lemma $\ref{dfn:20}$, we have 
\[ (j,j+1)\circ \sigma _{i,j}\circ (j,j+1)\circ t\circ (j,j+1)\circ \sigma _{i,j}\circ (j,j+1)=(i,j)\ {\rm or}\ {\rm id}_{K^{n}}. \]
If $(j,j+1)\circ \sigma _{i,j}\circ (j,j+1)\circ t\circ (j,j+1)\circ \sigma _{i,j}\circ (j,j+1)={\rm id}_{K^{n}}$, then $t={\rm id}_{K^{n}}$.
If $(j,j+1)\circ \sigma _{i,j}\circ (j,j+1)\circ t\circ (j,j+1)\circ \sigma _{i,j}\circ (j,j+1)=(i,j)$, then
\begin{equation*}
\begin{split}
t&=(j,j+1)\circ \sigma _{i,j}\circ (j,j+1)\circ (i,j)\circ (j,j+1)\circ \sigma _{i,j}\circ (j,j+1)\\
&=(j,j+1)\circ \sigma _{i,j}\circ (i,j+1)\circ \sigma _{i,j}\circ (j,j+1)\\
&=(j,j+1)\circ \sigma _{i,j+1}\circ (i,j+1)\circ (j,j+1)\\
&=\sigma _{i,j}\circ (i,j).
\end{split}
\end{equation*}
Thus we have $t=\sigma _{i,j}\circ (i,j)$.
\end{proof}
From Lemma $\ref{dfn:20}$ and Lemma $\ref{dfn:21}$, the universal covering map $\mu $ induces a local isomorphism 
\[ \mu ^{[n]}_{\ast }:{\rm Blow}_{\Delta _{K\ast \mu }\cup T_{\ast \mu }}K^{n}_{\ast \mu }/H\rightarrow {\rm Blow}_{\Delta _{E\ast }^{n}}E^{n}_{\ast }/{\mathcal S}_{n}=E^{[n]}_{\ast }. \]
Here Blow$_{A}B$ is the blow up of $B$ along $A\subset B$.
\begin{lem}\label{dfn:22}
For every $x\in E^{[n]}_{\ast }$, $|({\mu ^{[n]}_{\ast }})^{-1}(x)|=2$.
\end{lem}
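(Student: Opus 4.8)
The plan is to recognise $\mu^{[n]}_{*}$ as a finite covering map of the connected variety $E^{[n]}_{*}$, so that all of its fibres have the same cardinality, and then to compute that cardinality at one convenient point. By the discussion preceding the lemma, i.e.\ by Lemmas \ref{dfn:20} and \ref{dfn:21}, the map $\mu^{[n]}_{*}$ is a local isomorphism of smooth varieties, hence its fibres are discrete; it remains to see it is finite. For that I would use the identity
\[
(\mu^{n}|_{K^{n}_{*\mu}})^{-1}(\Delta^{n}_{E*}) = \Delta_{K*\mu}\cup T_{*\mu},
\]
which holds because $\mu(x_{i})=\mu(x_{j})$ is equivalent to $x_{i}=x_{j}$ or $x_{i}=\sigma(x_{j})$. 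Since $\mu^{n}$ is \'etale, blowing up commutes with this base change, so the induced morphism
\[
\mathrm{Blow}_{\Delta_{K*\mu}\cup T_{*\mu}}K^{n}_{*\mu}\;\longrightarrow\;\mathrm{Blow}_{\Delta^{n}_{E*}}E^{n}_{*}
\]
is finite of degree $2^{n}$, and it is equivariant for the canonical surjection $H\to\mathcal S_{n}$ (the automorphisms $\sigma_{i_{1}\ldots i_{k}}$ act trivially downstairs because $\mu\circ\sigma=\mu$). Passing to quotients shows $\mu^{[n]}_{*}$ is finite. Finally $E^{[n]}_{*}=E^{[n]}\setminus F_{E}$ is connected, being the complement of the codimension-$2$ closed set $F_{E}$ in the connected smooth (hence irreducible) variety $E^{[n]}$; a finite local isomorphism onto such a target is a covering map, whose number of sheets is constant and equal to $\deg\mu^{[n]}_{*}$.

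To compute the degree I would evaluate at a point $x\in E^{[n]}_{*}\setminus D_{E}$ corresponding to $n$ pairwise distinct points of $E$: near such $x$ no blow-up intervenes and $\mu^{[n]}_{*}$ is just the restriction of $K^{n}_{*\mu}/H\to E^{n}_{*}/\mathcal S_{n}$. The preimage of $x$ in $K^{n}_{*\mu}$ has exactly $2^{n}\cdot n!$ points ($n!$ orderings, times $2^{n}$ lifts through $\mu$), all lying off $\Delta_{K*\mu}\cup T_{*\mu}$. On this set $H$ acts freely: if $s\circ\sigma_{i_{1}\ldots i_{k}}\in H$ fixes a lift $(\tilde{e}_{l})_{l}$, applying $\mu$ gives $e_{s^{-1}(l)}=e_{l}$ for all $l$, hence $s=\mathrm{id}$ because the $e_{l}$ are distinct, and then $\sigma(\tilde{e}_{l})=\tilde{e}_{l}$ for each $l$ in the index set, impossible since $\sigma$ is the fixed-point-free covering involution. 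Thus these $2^{n}\cdot n!$ points form $2^{n}\cdot n!/|H|=2^{n}\cdot n!/(2^{n-1}\cdot n!)=2$ orbits by Proposition \ref{dfn:4}, so $|(\mu^{[n]}_{*})^{-1}(x)|=2$; by the previous paragraph this holds for every $x\in E^{[n]}_{*}$. (Alternatively one gets $\deg\mu^{[n]}_{*}=2$ at once from multiplicativity of degrees along $\mathrm{Blow}_{\Delta_{K*\mu}\cup T_{*\mu}}K^{n}_{*\mu}\to K^{n}_{*\mu}\to E^{n}_{*}\to E^{(n)}_{*}$ together with $|H|=2^{n-1}\cdot n!$ and $\deg\pi_{E}|_{E^{[n]}_{*}}=1$.)

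The only non-formal point is the finiteness claim in the first paragraph: one has to check carefully that the blow-up upstairs is precisely the pullback of the blow-up downstairs along the \'etale map $\mu^{n}$, and that the resulting finite morphism descends compatibly with $H\to\mathcal S_{n}$, so that $\mu^{[n]}_{*}$ is genuinely finite and the constant-number-of-sheets principle applies. Once that is in place, the rest is the short group-order computation above.
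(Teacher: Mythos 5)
Your argument is correct, but it globalizes in the opposite direction from the paper's. The paper computes the fibre directly over the delicate points, namely those lying over the big diagonal: it fixes $(x_{i})\in \Delta _{E\ast }^{n}$ with $x_{1}=x_{2}$, lists the $2^{n}$ lifts $\{y_{1},\sigma (y_{1})\}\times \cdots \times \{y_{n},\sigma (y_{n})\}$, and shows they fall into exactly two $H$-orbits, distinguished by the parity of the number of coordinates at which $\sigma $ has been applied (this parity is an $H$-invariant because $\sigma _{i_{1}\ldots i_{k}}\in H$ exactly when $k$ is even, and it separates the two classes because $\sigma _{i}\not\in H$). You instead first argue that $\mu ^{[n]}_{\ast }$ is a finite covering --- via $(\mu ^{n})^{-1}(\Delta _{E\ast }^{n})=\Delta _{K\ast \mu }\cup T_{\ast \mu }$, compatibility of blow-up with \'etale base change, and descent along the surjection $H\rightarrow {\mathcal S}_{n}$ --- and then count the fibre at a generic point off $D_{E}$, where $H$ acts freely on the $2^{n}\cdot n!$ lifts and Proposition \ref{dfn:4} gives $2^{n}\cdot n!/(2^{n-1}\cdot n!)=2$. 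Both proofs rest on the same group-theoretic input ($[G:H]=2$ and $\sigma _{i}\not\in H$). What your route buys is an explicit justification that the fibre cardinality is constant over all of $E^{[n]}_{\ast }$, including on the exceptional divisor, a point the paper leaves implicit (it exhibits the two orbits only over diagonal points and then asserts the statement for every $x$); the price is the base-change and descent step that you correctly single out as the one non-formal ingredient. The paper's route, by contrast, gives a hands-on verification precisely at the points where the blow-up intervenes. Both are sound, and your free-action count at a generic point is the cleaner of the two local computations.
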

\begin{proof}
For $(x_{i})_{i=1}^{n}\in \Delta _{E\ast }^{n}$ with $x_{1}=x_{2}$, there are $n$ elements $y_{1},\ldots ,y_{n}$ of $K$ such that $y_{1}=y_{2}$ and $\mu (y_{i})=x_{i}$ for $1\leq i\leq n$.
Then 
\[ ({\mu ^{n}})^{-1}((x_{i})_{i=1}^{n})\cap K^{n}_{\ast \mu }=\{y_{1},\sigma (y_{1})\}\times \cdots \times \{y_{n},\sigma (y_{n})\}. \]
For $\sigma _{i_{1}\ldots i_{k}}\in G$, since $H$ is generated by ${\mathcal S}_{n}$ and $\sigma _{i_{1}\ldots i_{k}}$, if $k$ is even we get $\sigma _{i_{1}\ldots i_{k}}\in H$, if $k$ is odd $\sigma _{i_{1}\ldots i_{k}}\not\in H$.
For $\{z_{i}\}_{i=1}^{n}\in ({\mu ^{n}})^{-1}((x_{i})_{i=1}^{n})\cap K^{n}_{\ast \mu }$, if the number of $i$ with $z_{i}=y_{i}$ is even then 
\[ \{z_{i}\}_{i=1}^{n}=\{\sigma (y_{1}),\sigma (y_{2}),y_{3}\ldots ,y_{n}\}\ {\rm on}\ K^{n}_{\ast \mu }/H,\ {\rm and} \]
if the number of $i$ with $z_{i}=y_{i}$ is odd then 
\[ \{z_{i}\}_{i=1}^{n}=\{\sigma (y_{1}),y_{2},y_{3}\ldots ,y_{n}\}\ {\rm on}\ K^{n}_{\ast \mu }/H. \]
Furthermore since $\sigma _{i}\not\in H$ for $1\leq i\leq n$,
\[ \{\sigma (y_{1}),\sigma (y_{2}),y_{3}\ldots ,y_{n}\}\not=\{\sigma (y_{1}),y_{2},y_{3}\ldots ,y_{n}\}. \]
Thus for every $x\in E^{[n]}_{\ast }$, $|({\mu ^{[n]}_{\ast }})^{-1}(x)|=2$.
\end{proof}
\begin{pro}\label{dfn:30}
$\mu ^{[n]}_{\ast }:{\rm Blow}_{\Delta _{K\ast \mu }\cup T_{\ast \mu }}K^{n}_{\ast \mu }/H\rightarrow {\rm Blow}_{\Delta _{E\ast }^{n}}E^{n}_{\ast }/{\mathcal S}_{n}$ is the universal covering space, and $X\setminus \pi ^{-1}(F_{E})\simeq {\rm Blow}_{\Delta _{K\ast \mu }\cup T_{\ast \mu }}K^{n}_{\ast \mu }/H$.
\end{pro}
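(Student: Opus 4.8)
The plan is to identify $\mu ^{[n]}_{\ast }$ as a connected double cover of $E^{[n]}_{\ast }={\rm Blow}_{\Delta _{E\ast }^{n}}E^{n}_{\ast }/{\mathcal S}_{n}$, to observe that $\pi _{1}(E^{[n]}_{\ast })\cong {\mathbb Z}/2{\mathbb Z}$, and then to invoke uniqueness of universal coverings, comparing $\mu ^{[n]}_{\ast }$ with the restriction of $\pi \colon X\to E^{[n]}$ over $E^{[n]}_{\ast }$.

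First I would check that $\mu ^{[n]}_{\ast }$ is a $2$-sheeted covering map. It is a local isomorphism: this is the displayed statement deduced from Lemmas \ref{dfn:20} and \ref{dfn:21} just before the proposition. By Lemma \ref{dfn:22}, each of its fibres consists of exactly two points. Now a local homeomorphism of Hausdorff spaces all of whose fibres have one fixed finite cardinality $d$ is automatically a $d$-sheeted covering: over a point $z_{0}$ with fibre $\{y_{1},\dots ,y_{d}\}$, choose pairwise disjoint open sets $V_{i}\ni y_{i}$ mapped isomorphically onto open sets $U_{i}\ni z_{0}$; then over $U:=U_{1}\cap \dots \cap U_{d}$ the $d$ points $V_{i}\cap (\mu ^{[n]}_{\ast })^{-1}(z)$ already exhaust the fibre of every $z\in U$, so $(\mu ^{[n]}_{\ast })^{-1}(U)=\bigsqcup _{i}\bigl(V_{i}\cap (\mu ^{[n]}_{\ast })^{-1}(U)\bigr)$ is a disjoint union of $d$ copies of $U$. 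Here ${\rm Blow}_{\Delta _{K\ast \mu }\cup T_{\ast \mu }}K^{n}_{\ast \mu }/H$ is Hausdorff, being the quotient of a complex manifold by the finite group $H$. Hence $\mu ^{[n]}_{\ast }$ is a covering of degree $2$.

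Next I would establish the connectedness inputs. For the source: $K^{n}$ is connected, and $K^{n}_{\ast \mu }=({\mu ^{n}})^{-1}(E^{n}_{\ast })$ is the complement in $K^{n}$ of a proper closed analytic subset, hence connected; blowing up along the closed analytic subset $\Delta _{K\ast \mu }\cup T_{\ast \mu }$ and passing to the quotient by $H$ both preserve connectedness, so the source of $\mu ^{[n]}_{\ast }$ is connected. For the base: $\pi _{1}(E^{[n]})\cong {\mathbb Z}/2{\mathbb Z}$ since $\pi \colon X\to E^{[n]}$ is a degree-$2$ universal covering, and $F_{E}=E^{[n]}\setminus E^{[n]}_{\ast }$ is analytic of codimension $2$ in $E^{[n]}$, so removing it does not change the fundamental group; thus $\pi _{1}(E^{[n]}_{\ast })\cong {\mathbb Z}/2{\mathbb Z}$. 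As ${\mathbb Z}/2{\mathbb Z}$ has no proper nontrivial subgroup, every connected double cover of $E^{[n]}_{\ast }$ is its universal covering; in particular so is $\mu ^{[n]}_{\ast }$, which proves the first assertion.

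Finally I would restrict $\pi $ over $E^{[n]}_{\ast }=E^{[n]}\setminus F_{E}$. Since $\pi $ is an unramified double cover, $\pi ^{-1}(F_{E})$ is analytic of codimension $2$ in the connected manifold $X$, so $X\setminus \pi ^{-1}(F_{E})=\pi ^{-1}(E^{[n]}_{\ast })$ is connected, and $\pi \colon X\setminus \pi ^{-1}(F_{E})\to E^{[n]}_{\ast }$ is a connected double cover, hence again a universal covering of $E^{[n]}_{\ast }$. Uniqueness of the universal covering then provides an isomorphism $X\setminus \pi ^{-1}(F_{E})\simeq {\rm Blow}_{\Delta _{K\ast \mu }\cup T_{\ast \mu }}K^{n}_{\ast \mu }/H$ over $E^{[n]}_{\ast }$, in particular an isomorphism of complex manifolds, which is the second assertion. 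The substantive input is already in hand in Lemmas \ref{dfn:20}--\ref{dfn:22} (the local isomorphism together with the fibre count); what remains is routine, the only point requiring attention being the codimension-$2$ bookkeeping that yields $\pi _{1}(E^{[n]}_{\ast })\cong \pi _{1}(E^{[n]})$ and the analogous statement for $\pi^{-1}(F_E)\subset X$, after which one only appeals to the standard fact that a connected double cover of a space with fundamental group ${\mathbb Z}/2{\mathbb Z}$ is its universal cover.
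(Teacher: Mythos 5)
Your argument is correct and follows essentially the same route as the paper's proof: the local isomorphism together with the constant fibre count of Lemma \ref{dfn:22} makes $\mu^{[n]}_{\ast}$ a degree-$2$ covering, while the codimension-$2$ bound on $F_{E}$ (resp.\ $\pi^{-1}(F_{E})$) keeps $\pi$ restricted over $E^{[n]}_{\ast}$ a universal covering of degree $2$, so uniqueness of the universal cover gives both assertions. You simply make explicit the details the paper leaves implicit (the covering criterion for a local homeomorphism with constant finite fibres, connectedness of the source, and $\pi_{1}(E^{[n]}_{\ast})\cong {\mathbb Z}/2{\mathbb Z}$).
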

\begin{proof}
Since $\mu ^{[n]}_{\ast }$ is a local isomorphism and the number of fiber is constant, so $\mu ^{[n]}_{\ast }$ is a covering map. Furthermore $\pi :X\setminus \pi ^{-1}(F_{E})\rightarrow E^{[n]}_{\ast }$ is the universal covering space and number of fiber is $2$, so  $\mu ^{[n]}_{\ast }:{\rm Blow}_{\Delta _{K\ast \mu }\cup T_{\ast \mu }}K^{n}_{\ast \mu }/H\rightarrow {\rm Blow}_{\Delta _{E\ast }^{n}}E^{n}_{\ast }/{\mathcal S}_{n}$ is the universal covering space, and by the uniqueness of the universal covering space, we have $X\setminus \pi ^{-1}(F_{E})\simeq {\rm Blow}_{\Delta _{K\ast \mu }\cup T_{\ast \mu }}K^{n}_{\ast \mu }/H$.
\end{proof}
Recall that $H$ is generated by ${\mathcal S}_{n}$ and $\{\sigma _{ij}\}_{1\leq i<j\leq n}$.
\begin{thm}\label{thm:10}
Let $E$ be an Enriques surface, $E^{[n]}$ the Hilbert scheme of $n$ points of $E$, $\pi :X\rightarrow E^{[n]}$ the universal covering space of $E^{[n]}$, and $n\geq 2$.
Then there is a resolution $\varphi  _{X}:X\rightarrow K^{n}/H$ such that $\varphi _{X}^{-1}(\Gamma _{K}/H)=\pi ^{-1}(D_{E})$.
\end{thm}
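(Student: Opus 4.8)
The plan is to construct $X$ as a resolution of the quotient $K^n/H$ by first understanding what happens away from the "bad" locus (the deep diagonal), and then showing the resolution extends over the bad locus because that locus has codimension $\geq 2$. Concretely, set $Y := K^n/H$ and let $\Gamma_K \subset K^n$ be the pullback of $S_K$ as in the preliminaries. First I would recall from Proposition 2.7 the identification $X \setminus \pi^{-1}(F_E) \simeq \mathrm{Blow}_{\Delta_{K\ast\mu}\cup T_{\ast\mu}}K^n_{\ast\mu}/H$. On the open set $K^n_{\ast\mu}$ the group $H$ acts with stabilizers only of the form $\mathrm{id}$, a transposition $(i,j)$ (fixing points of $\Delta_{K\ast\mu\,ij}$), or $\sigma_{ij}\circ(i,j)$ (fixing points of $T_{\ast\mu\,ij}$), by Lemmas 2.3 and 2.4; in each case the stabilizer is generated by a reflection (a pseudo-reflection of order $2$ whose fixed locus is a divisor in $K^n_{\ast\mu}$). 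Hence by Chevalley–Shephard–Todd the quotient $K^n_{\ast\mu}/H$ is smooth, and the blow-up along $\Delta_{K\ast\mu}\cup T_{\ast\mu}$ is precisely the map that resolves... wait, no: since the quotient is already smooth, what one actually wants is to compare $\mathrm{Blow}_{\Delta_{K\ast\mu}\cup T_{\ast\mu}}K^n_{\ast\mu}/H$ with $K^n_{\ast\mu}/H$ directly. The map $\varphi$ restricted here is the natural blow-down $\mathrm{Blow}_{\cdots}K^n_{\ast\mu}/H \to K^n_{\ast\mu}/H$, and $K^n_{\ast\mu}/H$ is an open subset of $Y = K^n/H$; its complement in $Y$ corresponds to $\Gamma_K/H$ together with the image of the deep diagonal. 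So on $X\setminus\pi^{-1}(F_E)$ the map $\varphi_X$ is defined and $\varphi_X^{-1}(\Gamma_K/H) \cap (X\setminus\pi^{-1}(F_E)) = \pi^{-1}(D_E)\cap(X\setminus\pi^{-1}(F_E))$, because $D_E = \pi_E^{-1}(\Delta_E^{(n)})$ pulls back under $\mu^{[n]}_\ast$ to the exceptional/diagonal locus lying over $\Gamma_K/H$.

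Next I would extend $\varphi_X$ across $\pi^{-1}(F_E)$. The key geometric input is that $F_E \subset E^{[n]}$ has codimension $2$ (Beauville, quoted in the preliminaries), hence $\pi^{-1}(F_E)\subset X$ has codimension $2$, so $X\setminus\pi^{-1}(F_E)$ is the complement of a codimension-$2$ analytic subset in the smooth (in fact Calabi–Yau, hence normal) variety $X$. On the target side, $\Gamma_K$ has codimension $2$ in $K^n$ and the image of the deep diagonal also has codimension $\geq 2$, so $Y = K^n/H$ is normal and the locus we have removed is codimension $\geq 2$ in $Y$. The partially-defined map $\varphi_X: X\setminus\pi^{-1}(F_E)\to Y$ is a proper birational morphism onto its image; I would argue it extends to a morphism $X\to Y$ either by invoking that $K^n\setminus\Gamma_K \to E^{[n]}\setminus D_E$ factors through $X$ (the map $\omega$ of the preliminaries), composing with $\omega$ and the quotient $K^n\to Y$ to get a rational map $X\dashrightarrow Y$ which is a morphism in codimension $1$, and then using Hartogs/normality of $X$ together with properness of $Y$ (it is projective, being a quotient of a projective variety) to conclude the indeterminacy locus is empty — a rational map to a projective variety from a normal variety is a morphism off a codimension-$\geq 2$ set, and here one checks there is no indeterminacy by a local computation near $\pi^{-1}(F_E)$.

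Then I would check that the extended $\varphi_X: X \to Y = K^n/H$ is a resolution, i.e. proper and birational with $X$ smooth. Properness follows since both $X$ and $Y$ are projective; birationality holds because it is an isomorphism over the big open set $Y \setminus (\text{codim }\geq 2)$ by the Chevalley–Shephard–Todd analysis above; and $X$ is smooth by hypothesis (it is a Calabi–Yau manifold, by Oguiso–Schröer). Finally, for the scheme-theoretic statement $\varphi_X^{-1}(\Gamma_K/H) = \pi^{-1}(D_E)$: both sides are reduced divisors (or at least we only need equality as sets, then note both are reduced), they agree on the big open set $X\setminus\pi^{-1}(F_E)$ by Step 1, and each is the closure of its intersection with that open set — for $\pi^{-1}(D_E)$ because $D_E$ is irreducible... (actually $D_E$ is irreducible for $n\geq 2$, being the image of the irreducible diagonal) and $F_E$ has codimension $2$, so $\pi^{-1}(D_E)\cap(X\setminus\pi^{-1}(F_E))$ is dense in $\pi^{-1}(D_E)$; similarly for $\varphi_X^{-1}(\Gamma_K/H)$ using that $\Gamma_K/H$ has no components inside the removed codimension-$2$ locus. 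Taking closures gives the equality.

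The main obstacle I anticipate is Step 2: showing the morphism $\varphi_X$ genuinely extends over $\pi^{-1}(F_E)$ as a \emph{morphism} (not merely a rational map), and that the extension still lands in $K^n/H$ rather than some other birational model. The subtlety is that $K^n/H$ is singular along the image of the deep diagonal, so one cannot simply invoke "smooth target + codimension $2$" — instead one must either produce the map by a direct local model near the deep diagonal (describing $X$ near $\pi^{-1}(F_E)$ via the universal property of the Hilbert scheme and matching it with the quotient singularity of $K^n/H$), or use a valuative/Hartogs argument exploiting normality of $X$ and projectivity of $K^n/H$ to fill in the indeterminacy. Getting this extension clean, and verifying it is compatible with $\pi$ and with the covering $\omega$, is where the real work lies.
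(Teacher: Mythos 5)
Your overall skeleton (define the map on $X\setminus\pi^{-1}(F_{E})$ via Proposition 2.6, extend it across the codimension-two set $\pi^{-1}(F_{E})$, then read off surjectivity, birationality and the identification of $\varphi_X^{-1}(\Gamma_K/H)$ with $\pi^{-1}(D_E)$) matches the paper's, but the step you yourself single out as ``where the real work lies'' is exactly the step you have not supplied, and it is the actual content of the theorem. Normality of $X$ plus projectivity of $K^{n}/H$ only gives that the rational map is defined outside a set of codimension at least $2$; it does not rule out genuine indeterminacy along $\pi^{-1}(F_{E})$, which is itself of codimension $2$. The paper closes this gap with a concrete line-bundle argument: take $\mathcal{L}$ ample on $E^{(n)}$; then $p_{H}^{\ast}\mathcal{L}$ is ample on $K^{n}/H$ because $p_{H}$ is finite, while $f^{\ast}(p_{H}^{\ast}\mathcal{L})$ is the restriction to $X\setminus\pi^{-1}(F_{E})$ of the line bundle $\mathbb{L}=\pi^{\ast}\pi_{E}^{\ast}\mathcal{L}$, which is defined and globally generated on all of $X$ (the sections pulled back from $E^{(n)}$ already have no base points). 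Since sections of powers of $p_{H}^{\ast}\mathcal{L}$ embed $K^{n}/H$ and pull back under $f$ to sections of powers of $\mathbb{L}$ that extend across the codimension-two set and generate $\mathbb{L}$ everywhere, the associated map to projective space is a morphism on all of $X$ with image $K^{n}/H$; this is the extension $\varphi_{X}$. Some such mechanism (or an explicit local model near the deep diagonal) is indispensable; without it your proof is incomplete.

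A secondary error: your appeal to Chevalley--Shephard--Todd is wrong. The stabilizers $(i,j)$ and $\sigma_{ij}\circ(i,j)$ identified in Lemmas 2.3 and 2.4 fix $\Delta_{K\ast\mu\,ij}$ and $T_{\ast\mu\,ij}$, which have codimension $2$ in $K^{n}$, not $1$; they are not pseudo-reflections, and $K^{n}_{\ast\mu}/H$ is \emph{not} smooth --- it has transversal $A_{1}$-singularities along the images of $\Delta_{K\ast\mu}$ and $T_{\ast\mu}$, and the blow-up-then-quotient of Proposition 2.6 is precisely its resolution. This does not by itself sink your argument, because the birational statement you actually need is $X\setminus\pi^{-1}(D_{E})\cong(K^{n}\setminus\Gamma_{K})/H$, which follows from $\omega$ being the universal cover with deck group $H$ together with the fact that $\Gamma_{K}/H$ has codimension $2$; but the justification you gave for it is not valid, and the claim that $\varphi_X$ is an isomorphism over the smooth locus of $K^n_{\ast\mu}/H$ is false, since it contracts the exceptional divisors over $\Delta_{K\ast\mu}/H$ and $T_{\ast\mu}/H$.
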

\begin{proof}
Let $E$ be an Enriques surface, $E^{[n]}$ the Hilbert scheme of $n$ points of $E$, $\pi :X\rightarrow E^{[n]}$ the universal covering space of $E^{[n]}$ where $X$ is a Calabi-Yau manifold, and $\rho $ the covering involution of $\pi $.
From ${\rm Proposition}2.6$, we have $X\setminus \pi ^{-1}(F_{E})\simeq {\rm Blow}_{\Delta _{K\ast \mu }\cup T_{\ast \mu }}K^{n}_{\ast \mu }/H$. Thus there is a meromorphim $f$ of $X$ to $K^{n}/H$ with satisfying the following  commutative diagram:
$$
\xymatrix{
E^{[n]}\setminus F_{E} \ar[r]^{\pi _{E}}&E^{(n)} \\
X\setminus \pi ^{-1}(F_{E}) \ar[u]^{\pi } \ar[r]^{f} &K^{n}/H \ar[u]^{p_{H}}
}
$$
where $\pi _{E}:E^{[n]}\rightarrow E^{(n)}$ is the Hilbert-Chow morphism, and $p_{H}:K^{n}/H\rightarrow E^{(n)}$ is the natural projection.
For any ample line bundle ${\mathcal L}$ on $E^{(n)}$, since the natural projection $p_{H}:K^{n}/H\rightarrow E^{(n)}$ is finite, and $E^{(n)}$ and $K^{n}/H$ are projective, $p_{H}^{\ast }{\mathcal L}$ is ample.
%From $[\ref{bio:4}\,{\rm Theorem}4,1]$,
%there is an ample line bundle $L$ on K such that $p^{\ast }{\mathcal L}\cong \bigotimes _{i=1}^{n}p_{i}^{\ast }L$ where $p_{i}:K^{n}\rightarrow K$ is the natural projection.
Since $\pi ^{-1}(F_{E})$ is an analytic closed subset of codimension $2$ in $X$, there is a line bundle ${\mathbb L}$ on $X$ such that 
$f^{\ast }(p_{H}^{\ast }{\mathcal L})={\mathbb L}\mid _{X\setminus \pi ^{-1}(F_{E})}$.
From the above diagram, we have
\[ {\mathbb L}=\pi ^{\ast }(\pi _{E}^{\ast }{\mathcal L}).\]
Since ${\mathcal L}$ is ample on $E^{(n)}$, $\pi _{E}^{\ast }{\mathcal L}$ is a globally generated line bundle on $E^{[n]}$. 
Moreover $\pi ^{\ast }(\pi _{E}^{\ast }{\mathcal L})$ is also a globally generated line bundle on $X$.
Since $p_{H}^{\ast }{\mathcal L}$ is ample on $K^{n}/H$ and ${\mathbb L}$ is globally generated, 
there is a holomorphism $\varphi _{X}$ of $X$ to $K^{n}/H$ such that $\varphi _{X}\mid _{X\setminus \pi ^{-1}(F_{E})}=f\mid _{X\setminus \pi ^{-1}(F_{E})}$. 
Since $X$ is a proper and the image of $f$ contains a Zariski open subset, $\varphi _{X}:X\rightarrow K^{n}/H$ is surjective.
Moreover $f:X\setminus \pi ^{-1}(D_{E})\cong (K^{n}\setminus \Gamma _{K})/H$, that is a resolution.
\end{proof}

\section{Proof of Theorem $1.1$}
Let $S$ be a smooth projective surface and $P(n)$ the set of partitions of $n$. 
We write $\alpha \in P(n)$ as $\alpha =(\alpha _{1},\ldots ,\alpha _{n})$ with $1\cdot {\alpha _{1}}+\cdots+n\cdot {\alpha _{n}}=n$, 
and put $|\alpha |:= \sum _{i}\alpha _{i}$.
We put $S^{\alpha }:=S^{\alpha _{1}}\times \cdots \times S^{\alpha _{n}}$, $S^{(\alpha )}:=S^{(\alpha _{1})}\times \cdots \times S^{(\alpha _{n})}$ and $S^{[n]}$ the Hilbert scheme of $n$ points of $S$.
The cycle type $\alpha (g)$ of $g\in {\mathcal S}_{n}$ is the partition $(1^{\alpha _{1}(g)},\ldots  ,n^{\alpha _{n}(g)})$ where $\alpha _{i}(g)$ is the number of cycles with length $i$ as the 
representation of $g$ in a product of disjoint cycles. As usual, we denote by $(n_{1},\ldots ,n_{r})$ the cycle defined by mapping $n_{i}$ to $n_{i+1}$ for $i < r$ and $n_{r}$ to $n_{1}$.
By Steenbrink $[\ref{bio:21},\,{\rm page}\,526$-$530]$, $S^{(\alpha )}\,(\alpha \in P(n))$ have the Hodge decomposition.
By G\"ottsche and Soergel $[\ref{bio:2},\,{\rm Theorem}\,2]$, we have an isomorphism of Hodge structures:
\[ H^{i+2n}(S^{[n]},{\mathbb C})(n) = \sum _{\alpha \in P(n)}H^{i+2|\alpha |}(S^{(\alpha )},{\mathbb C})(|\alpha |) \]
where $H^{i+2|\alpha |}(S^{(\alpha )},{\mathbb C})(|\alpha |)$ is the Tate twist of $H^{i+2|\alpha |}(S^{(\alpha )},{\mathbb C})$, \\
and $H^{i+2n}(S^{[n]},{\mathbb C})(n)$ is the Tate twist of $H^{i+2n}(S^{[n]},{\mathbb C})$.
Since $H^{i+2n}(S^{[n]},{\mathbb C})(n)$ is a Hodge structure of weight $i+2n-2n=i$, 
we have $H^{i+2n}(S^{[n]},{\mathbb C})(n)^{p,q}=H^{i+2n}(S^{[n]},{\mathbb Q})^{p+n,q+n}$ for $p,q\in {\mathbb Z}$ with $p+q=i$,
and $H^{i+2|\alpha |}(S^{(\alpha )},{\mathbb C})(|\alpha |)$ is a Hodge structure of weight $i+2|\alpha |-2|\alpha |=i$,
we have $H^{i+2|\alpha |}(S^{(\alpha )},{\mathbb C})(|\alpha |)^{p,q}=H^{i+2|\alpha |}(S^{(\alpha )},{\mathbb C})^{p+|\alpha |,q+|\alpha |}$ for $p,q\in {\mathbb Z}$ with $p+q=i$.
Thus we have 
\begin{equation}
\label{eq:1012}
{\rm dim}_{{\mathbb C}}H^{2n}(S^{[n]},{\mathbb C})^{1,2n-1} = \sum _{\alpha \in P(n)}{\rm dim}_{{\mathbb C}}H^{2|\alpha |}(S^{(\alpha )},{\mathbb C})^{1-n+|\alpha |,n-1+|\alpha |}.
\end{equation}
Let $E$ be an Enriques surface, $E^{[n]}$ the Hilbert scheme of $n$ points of $E$, and $\pi :X\rightarrow E^{[n]}$ the universal covering space of $E^{[n]}$ where $X$ is a Calabi-Yau manifold.
\begin{pro}\label{dfn:9}
${\rm dim}_{{\mathbb C}}H^{1}(E^{[n]}, \Omega ^{2n-1}_{E^{[n]}})=0$.
\end{pro}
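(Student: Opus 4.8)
The plan is to identify $H^{1}(E^{[n]},\Omega^{2n-1}_{E^{[n]}})$ with a single Hodge component of $H^{2n}(E^{[n]},\mathbb{C})$ and then kill that component using the G\"ottsche--Soergel isomorphism recalled above. Since $E^{[n]}$ is smooth and projective, $H^{1}(E^{[n]},\Omega^{2n-1}_{E^{[n]}})$ is the $(2n-1,1)$-part of $H^{2n}(E^{[n]},\mathbb{C})$, and by complex conjugation this has the same dimension as the $(1,2n-1)$-part. Hence, applying $(\ref{eq:1012})$ to $S=E$,
\[
{\rm dim}_{\mathbb{C}}\,H^{1}(E^{[n]},\Omega^{2n-1}_{E^{[n]}})=\sum_{\alpha\in P(n)}{\rm dim}_{\mathbb{C}}\,H^{2|\alpha|}(E^{(\alpha)},\mathbb{C})^{1-n+|\alpha|,\,n-1+|\alpha|},
\]
so it is enough to prove that each summand on the right vanishes.

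Next I would pin down the Hodge type of the pieces $H^{2|\alpha|}(E^{(\alpha)},\mathbb{C})$, where $E^{(\alpha)}=E^{(\alpha_{1})}\times\cdots\times E^{(\alpha_{n})}$. The point is that the rational cohomology of an Enriques surface $E$ is of Hodge--Tate type: $b_{1}(E)=0$ because $H^{1}(E,\mathcal{O}_{E})=0$, hence $b_{3}(E)=0$ by Poincar\'e duality, while $h^{2,0}(E)=h^{0}(E,K_{E})=0$ and $h^{0,2}(E)=h^{2}(E,\mathcal{O}_{E})=0$; thus $H^{k}(E,\mathbb{C})=0$ for odd $k$ and $H^{2k}(E,\mathbb{C})$ is pure of Hodge type $(k,k)$ for $k\in\{0,1,2\}$. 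Since $H^{\ast}(E^{(m)},\mathbb{Q})=H^{\ast}(E^{m},\mathbb{Q})^{{\mathcal S}_{m}}$ sits inside $H^{\ast}(E^{m},\mathbb{Q})\cong H^{\ast}(E,\mathbb{Q})^{\otimes m}$ compatibly with the Hodge structures (the one on $E^{(\alpha)}$ being that of Steenbrink $[\ref{bio:21}]$), the cohomology $H^{\ast}(E^{(m)},\mathbb{C})$ is again concentrated in even degrees and pure of Hodge--Tate type, and then so is $H^{\ast}(E^{(\alpha)},\mathbb{C})$ by the K\"unneth formula. In particular $H^{2|\alpha|}(E^{(\alpha)},\mathbb{C})$ is pure of Hodge type $(|\alpha|,|\alpha|)$.

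To finish, observe that since $n\geq 2$ one has $1-n+|\alpha|\neq|\alpha|$, so the bidegree $(1-n+|\alpha|,\,n-1+|\alpha|)$ is not $(|\alpha|,|\alpha|)$; therefore $H^{2|\alpha|}(E^{(\alpha)},\mathbb{C})^{1-n+|\alpha|,\,n-1+|\alpha|}=0$ for every $\alpha\in P(n)$, the displayed sum is $0$, and the proposition follows. I do not anticipate a genuine obstacle: the only care needed is in tracking the Hodge weights and Tate twists when passing from $(\ref{eq:1012})$ to the components above, and in recording that $E$ being projective is what legitimizes invoking G\"ottsche--Soergel and Steenbrink's Hodge decompositions — which is automatic for Enriques surfaces.
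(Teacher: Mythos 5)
Your argument is correct, and it shares the paper's starting point -- the G\"ottsche--Soergel decomposition $(\ref{eq:1012})$ applied to $S=E$, together with Hodge symmetry to pass from $H^{1}(E^{[n]},\Omega^{2n-1}_{E^{[n]}})=H^{2n}(E^{[n]},{\mathbb C})^{2n-1,1}$ to the $(1,2n-1)$-component -- but it finishes differently. The paper first uses the degree bounds $0\leq 1-n+|\alpha|$ and $n-1+|\alpha|\leq 2|\alpha|$ to reduce to the two partitions with $|\alpha|=n$ and $|\alpha|=n-1$, and then kills each of the two surviving summands by a separate K\"unneth computation using $H^{2}(E,{\mathbb C})^{0,2}=H^{1}(E,{\mathbb C})^{0,1}=0$. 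You instead observe that $H^{\ast}(E,{\mathbb C})$ is concentrated in even degrees and of Hodge--Tate type, so that $H^{2|\alpha|}(E^{(\alpha)},{\mathbb C})$ is pure of type $(|\alpha|,|\alpha|)$, and the off-diagonal bidegree $(1-n+|\alpha|,\,n-1+|\alpha|)$ with $n\geq 2$ forces every summand to vanish at once. This is more uniform and avoids the case analysis entirely; the one point you rightly take care of (and which the paper leaves implicit) is that the Hodge structure on $H^{\ast}(E^{(m)},{\mathbb C})$ in the sense of Steenbrink is the ${\mathcal S}_{m}$-invariant sub-Hodge-structure of $H^{\ast}(E^{m},{\mathbb C})$, so the Hodge--Tate property descends to the symmetric products and then to $E^{(\alpha)}$ by K\"unneth. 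In exchange, the paper's longer route exhibits explicitly which partitions could in principle contribute, which is reused nowhere else, so nothing is lost by your shortcut.
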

\begin{proof}
From $[\ref{bio:21},\,{\rm page}\,526$-$530]$, $E^{(n)}$ have the Hodge decomposition, we have 
\[ H^{2n}(E^{[n]},{\mathbb C})^{1,2n-1}\simeq H^{2n-1}(E^{[n]}, \Omega ^{1}_{E^{[n]}}),\ {\rm and} \]
\[ H^{2n}(E^{(n)},{\mathbb C})^{1,2n-1}\simeq H^{2n-1}(E^{(n)}, \Omega ^{1}_{E^{(n)}}).\]
Similarly since $E^{(\alpha )}$ $(\alpha \in P(n))$ has the Hodge decomposition, 
if $1-n+|\alpha |<0$ or $n-1+|\alpha |>2n$ for $\alpha \in P(n)$, then 
\[ H^{2|\alpha |}(E^{(\alpha )},{\mathbb C})(|\alpha |)^{1-n+|\alpha |,n-1+|\alpha |}=0. \]
For $\alpha \in P(n)$ with $1-n+|\alpha |\geq 0$ and $n-1+|\alpha |\leq 2n$, then $|\alpha |=n-1$, $|\alpha |=n$ or $|\alpha |=n+1$. 
By the definition of $\alpha \in P(n)$ and $|\alpha |$,
we obtain $\alpha =\{(n,0,\ldots ,0),(n-2,1,0,\ldots ,0) \}$.
Thus, by the above equation $(\ref{eq:1012})$, we have 
\[ {\rm dim}_{{\mathbb C}}H^{2n}(E^{[n]}, {\mathbb C})^{1,2n-1}={\rm dim}_{{\mathbb C}}H^{2n}(E^{(n)},{\mathbb C})^{1,2n-1}\oplus H^{2n-2}(E^{(n-2)}\times E^{(2)},{\mathbb C})^{0,2n-2}.
 \]
 From the K\"unneth Theorem, we obtain
 \[ H^{2n-2}(E^{(n-2)}\times E^{(2)},{\mathbb C})^{0,2n-2}\simeq \bigoplus _{s+t=2n-2}H^{s}(E^{(n-2)},{\mathbb C})^{0,s}\otimes H^{t}(E^{(2)},{\mathbb C})^{0,t}. \]
 Since $H^{1}(E,{\mathbb C})^{0,1}=H^{2}(E,{\mathbb C})^{0,2}=0$, we have 
\[ H^{2n-2}(E^{(n-2)}\times E^{(2)},{\mathbb C})^{0,2n-2}=0. \]
Let $\Lambda $ be a subset of ${\mathbb Z}_{\geq 0}^{2n}$
\[ \Lambda :=\{ (s_{1},\cdots ,s_{n},t_{1},\cdots ,t_{n})\in {\mathbb Z}_{\geq 0}^{2n}:\Sigma _{i=1}^{n}s_{i}=1,\,\Sigma _{j=1}^{n}t_{j}=2n-1 \}. \] 
From the K\"unneth Theorem, we have
$$
H^{2n}(E^{n},{\mathbb C})^{1,2n-1}\simeq \bigoplus _{(s_{1},\cdots ,s_{n},t_{1},\cdots ,t_{n})\in \Lambda }\biggl{(}\bigotimes _{i=1}^{n}H^{2}(E,{\mathbb C})^{s_{i},t_{i}}\biggl{)}.
$$ 
Since $n\geq 2$,
for each $(s_{1},\cdots ,s_{n},t_{1},\cdots ,t_{n})\in \Lambda $, there is a number $i\in \{1,\cdots ,n\}$ such that $s_{i}=0$.
Thus since $H^{2}(E,{\mathbb C})^{0,2}=0$, we have $H^{2n-1}(E^{n},{\mathbb C})^{1,2n-1}=0$, so $H^{2n-1}(E^{(n)},{\mathbb C})^{1,2n-1}=0$.
Hence $H^{1}(E^{[n]}, \Omega ^{2n-1}_{E^{[n]}})=0$.
\end{proof}
\begin{thm}\label{thm:1} 
Let $E$ be an Enriques surface, $E^{[n]}$ the Hilbert scheme of $n$ points of $E$, and $X$ the universal covering space of $E^{[n]}$. 
Then all small deformations of $X$ is induced by that of $E^{[n]}$. 
\end{thm}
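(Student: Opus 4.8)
The plan is to reduce the statement to the vanishing already recorded in Proposition \ref{dfn:9}, by computing the pushforward of the tangent sheaf of $X$ along the covering $\pi\colon X\to E^{[n]}$ and then comparing Kuranishi spaces.

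First I would use that $\pi$ is \'etale of degree $2$ and is exactly the canonical cover of $E^{[n]}$: it is the cyclic double cover attached to the $2$-torsion line bundle $L:=K_{E^{[n]}}$ (indeed $K_{X}=\pi^{\ast}K_{E^{[n]}}=\mathcal{O}_{X}$ forces the defining bundle to be $K_{E^{[n]}}$, which is nontrivial since $E^{[n]}$ is not Calabi-Yau). Hence $\pi_{\ast}\mathcal{O}_{X}\cong\mathcal{O}_{E^{[n]}}\oplus L$ (using $L^{\otimes 2}\cong\mathcal{O}_{E^{[n]}}$) and, since $\pi$ is \'etale, $\pi^{\ast}T_{E^{[n]}}\cong T_{X}$. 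The projection formula then gives $\pi_{\ast}T_{X}\cong T_{E^{[n]}}\oplus\bigl(T_{E^{[n]}}\otimes L\bigr)$, and finiteness of $\pi$ yields
\[
H^{1}(X,T_{X})\;\cong\;H^{1}(E^{[n]},T_{E^{[n]}})\;\oplus\;H^{1}\bigl(E^{[n]},T_{E^{[n]}}\otimes K_{E^{[n]}}\bigr),
\]
in which the first summand is the image of the pullback map $\pi^{\ast}$.

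Next I would identify the second summand. Since $E^{[n]}$ is smooth of dimension $2n$, one has $T_{E^{[n]}}\otimes K_{E^{[n]}}\cong\bigwedge^{2n-1}\Omega^{1}_{E^{[n]}}=\Omega^{2n-1}_{E^{[n]}}$, so that summand equals $H^{1}(E^{[n]},\Omega^{2n-1}_{E^{[n]}})$, which is zero by Proposition \ref{dfn:9}. Therefore $\pi^{\ast}\colon H^{1}(E^{[n]},T_{E^{[n]}})\to H^{1}(X,T_{X})$ is an isomorphism.

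Finally I would pass to Kuranishi spaces. Taking the relative canonical double cover turns a small deformation of $E^{[n]}$ into one of $X$, and on Kodaira--Spencer classes this is precisely the map $\pi^{\ast}$ above. The Kuranishi space of $X$ is smooth since $X$ is Calabi--Yau (Bogomolov--Tian--Todorov), and that of $E^{[n]}$ is smooth by Fantechi's theorem (its deformations all come from those of $E$, which are unobstructed). A morphism of smooth germs inducing an isomorphism on Zariski tangent spaces is an isomorphism of germs; in particular it is surjective, which is exactly the assertion that every small deformation of $X$ is induced by one of $E^{[n]}$. The step requiring the most care is the identification of the part of $H^{1}(X,T_{X})$ anti-invariant under the covering involution with $H^{1}(E^{[n]},\Omega^{2n-1}_{E^{[n]}})$ --- this hinges on $\pi$ being the canonical cover together with the isomorphism $T_{E^{[n]}}\otimes K_{E^{[n]}}\cong\Omega^{2n-1}_{E^{[n]}}$ --- after which the needed vanishing is precisely Proposition \ref{dfn:9}; the only other non-vacuous point is checking that the canonical-cover construction is compatible with base change, so that the tangent-space isomorphism upgrades to an isomorphism of Kuranishi families.
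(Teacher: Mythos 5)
Your proposal is correct and follows essentially the same route as the paper: both decompose $H^{1}(X,T_{X})$ via $\pi_{\ast}\mathcal{O}_{X}\cong\mathcal{O}_{E^{[n]}}\oplus\mathcal{O}_{E^{[n]}}(K_{E^{[n]}})$ into $H^{1}(E^{[n]},T_{E^{[n]}})\oplus H^{1}(E^{[n]},\Omega^{2n-1}_{E^{[n]}})$, kill the second summand by Proposition \ref{dfn:9}, and conclude by comparing the smooth Kuranishi germs through the relative double cover of the universal family. The only cosmetic difference is that the paper routes the computation through $T_{X}\cong\Omega^{2n-1}_{X}$ and the explicit dimension count $\dim H^{1}(E^{[n]},T_{E^{[n]}})=10$ from Fantechi, whereas you apply the projection formula to $T_{X}=\pi^{\ast}T_{E^{[n]}}$ directly; the substance is identical.
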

\begin{proof} 
Since each canonical bundle of $E$ and $E^{[n]}$ is torsion, and from Ran $[\ref{bio:20},\,{\rm Corollary}\,2]$, they have unobstructed deformations.
The Kuranishi family of $E$ has a $10$-dimensional smooth base, so the Kuranishi family of $E^{[n]}$ has a 
$10$-dimensional smooth base 
by $[\ref{bio:1},\,{\rm Theorems}\,0.1\,{\rm and}\,0.3]$. Thus we have dim$_{{\mathbb C}}H^{1}(E^{{n}},T_{E^{[n]}})=10$. 

Since $K_{E^{[n]}}$ is not trivial and $2K_{E^{[n]}}$ is trivial, we have 
\[ T_{E^{[n]}}\simeq \Omega ^{2n-1}_{E^{[n]}}\otimes K_{E^{[n]}}. \]
Therefore we have dim$_{{\mathbb C}}H^{1}(E^{{n}},\Omega ^{2n-1}_{E^{[n]}}\otimes K_{E^{[n]}})=10$.
Since $K_{X}$ is trivial, then we have $T_{X}\simeq \Omega ^{2n-1}_{X}$. 
Since $\pi :X\rightarrow E^{[n]}$ is the covering map and 
\[ X\simeq {\mathcal Spec}\,{\mathcal O}_{E^{[n]}}\oplus {\mathcal O}_{E^{[n]}}(K_{E^{[n]}}) \]
 by $[\ref{bio:3},\,{\rm Theorem}\,3.1]$, we have
\begin{equation*}
\begin{split}
H^{k}(X,\Omega ^{2n-1}_{X})&\simeq H^{k}(E^{[n]},\Omega ^{2n-1}_{E^{[n]}}\oplus (\Omega ^{2n-1}_{E^{[n]}}\otimes K_{E^{[n]}}))\\
&\simeq H^{k}(E^{[n]},\Omega ^{2n-1}_{E^{[n]}})\oplus H^{k}(E^{[n]},\Omega ^{2n-1}_{E^{[n]}}\otimes K_{E^{[n]}}).
\end{split}
\end{equation*}
Combining this with Proposition $3.1$, we obtain
\[ {\rm dim}_{\mathbb C}H^{1}(X,\Omega ^{2n-1}_{X})={\rm dim}_{\mathbb C}H^{1}(E^{[n]},\Omega ^{2n-1}_{E^{[n]}}\otimes K_{E^{[n]}}). \]
Since $\pi :X\rightarrow E^{[n]}$ is a covering map, $\pi ^{\ast }:H^{1}(E^{[n]},T_{E^{[n]}})\hookrightarrow H^{1}(X,T_{X})$ is injective.
Thus we have dim$_{\mathbb C}H^{1}(X,T_{X})=10$.

Let $p:{\mathcal Y}\rightarrow U$ be the universal family of $E^{[n]}$ and $f:{\mathcal X}\rightarrow {\mathcal Y}$ is the universal covering space.
Then $q:{\mathcal X}\rightarrow U$ is a flat family of $X$ where $q:=p\circ f$.
Then we have a commutative diagram:
$$
\xymatrix{
T_{U,0} \ar[dr]_{\rho _{q}} \ar[r]^{\rho_{p}} &{\rm H}^{1}({\mathcal Y}_{0},T_{{\mathcal Y}_{0}}) \ar[d]^{\tau } \ar@{=}[r]  &H^{1}(E^{[n]},T_{E^{[n]}}) \ar[d]^{\pi ^{\ast }} \\
&{\rm H}^{1}({\mathcal X}_{0},T_{{\mathcal X}_{0}}) \ar@{=}[r] &H^{1}(X,T_{X}). 
}
$$
Since $H^{1}(E^{[n]},T_{E^{[n]}})\simeq H^{1}(X,T_{X})$ by $\pi ^{\ast }$, the vertical arrow $\tau $ is an isomorphism and 
\[ {\rm dim}_{{\mathbb C}}H^{1}({\mathcal X}_{u},T_{{\mathcal X}_{u}})={\rm dim}_{{\mathbb C}}H^{1}({\mathcal X}_{u},\Omega ^{2n-1}_{{\mathcal X}_{u}}) \]
is a constant for some neighborhood of $0\in U$,
it follows that $q:{\mathcal X}\rightarrow U$ is the complete family of ${\mathcal X}_{0}=X$, therefore $q:{\mathcal X}\rightarrow U$
is the versal family of ${\mathcal X}_{0}=X$. Thus every fibers of any small deformation of $X$ is the universal covering of some the Hilbert scheme of $n$ points of some Enriques surface.
\end{proof}

\section{Proof of Theorem $1.2$}
Let $E$ be an Enriques surface, $E^{[n]}$ the Hilbert scheme of $n$ points of $E$, and $\pi :X\rightarrow E^{[n]}$ the universal covering space of $E^{[n]}$ where $X$ is a Calabi-Yau manifold.
At first, we show that for an automorphism $f$ of $E^{[n]}$, $f(D_{E})=D_{E}\Leftrightarrow f^{\ast }({\mathcal O}_{E^{[n]}}(D_{E}))={\mathcal O}_{E^{[n]}}(D_{E})$. Next, we show Theorem $1.2$.
\begin{pro}\label{dfn:80}
${\rm dim}_{{\mathbb C}}H^{0}(E^{[n]},{\mathcal O}_{E^{[n]}}(D_{E}))=1$.
\end{pro}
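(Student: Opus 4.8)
The plan is to get the bound $\dim_{\mathbb C}H^{0}(E^{[n]},\mathcal{O}_{E^{[n]}}(D_{E}))\geq 1$ for free and to obtain the reverse inequality by reducing the question to the triviality of global functions on (a blow up of) $E^{n}$. Since $D_{E}$ is an effective divisor, $\mathcal{O}_{E^{[n]}}(D_{E})$ carries its tautological nonzero section, so $\dim_{\mathbb C}H^{0}\geq 1$ and the real content is $\leq 1$. First I would restrict to the open set $E^{[n]}_{\ast}$: since $F_{E}=E^{[n]}\setminus E^{[n]}_{\ast}$ is analytic closed of codimension $2$ and $E^{[n]}$ is smooth, Hartogs' theorem gives $H^{0}(E^{[n]},\mathcal{O}(D_{E}))=H^{0}(E^{[n]}_{\ast},\mathcal{O}(D_{E}))$.

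Next I would use the description $E^{[n]}_{\ast}=W/\mathcal{S}_{n}$ with $W:={\rm Blow}_{\Delta_{E\ast}^{n}}E^{n}_{\ast}$, where $b:W\to E^{n}_{\ast}$ is the blow up with exceptional divisor $\tilde{D}$, and $q:W\to E^{[n]}_{\ast}$ is the quotient map. I claim $q^{\ast}\bigl(\mathcal{O}(D_{E})|_{E^{[n]}_{\ast}}\bigr)=\mathcal{O}_{W}(2\tilde{D})$: the map $q$ is étale away from $\tilde{D}$, while the stabiliser in $\mathcal{S}_{n}$ of a generic point of $\tilde{D}$ is a single transposition $(i,j)$, which acts by $-1$ on the normal bundle of $\{x_{i}=x_{j}\}$ and hence trivially on its projectivisation, so it fixes $\tilde{D}$ pointwise; thus $q$ is ramified of order $2$ along $\tilde{D}$ and $q^{\ast}D_{E}=2\tilde{D}$ as divisors (the two divisors agree outside a codimension $2$ subset of $W$, hence everywhere). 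Consequently $q^{\ast}$ embeds $H^{0}(E^{[n]}_{\ast},\mathcal{O}(D_{E}))$ into $H^{0}(W,\mathcal{O}_{W}(2\tilde{D}))$.

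It then remains to show $H^{0}(W,\mathcal{O}_{W}(2\tilde{D}))=\mathbb C$. Over $W\setminus\tilde{D}\cong E^{n}_{\ast}\setminus\Delta_{E\ast}^{n}=E^{n}\setminus\Delta_{E}^{n}$ the line bundle $\mathcal{O}_{W}(2\tilde{D})$ is trivialised by the square $s_{\tilde{D}}^{2}$ of its tautological section, so a global section $t$ of $\mathcal{O}_{W}(2\tilde{D})$ restricts there to a regular function on $E^{n}\setminus\Delta_{E}^{n}$; since $\Delta_{E}^{n}$ has codimension $2$ in the smooth, compact, connected variety $E^{n}$, this function extends to $E^{n}$ and is therefore a constant $c$. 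Then $t-c\,s_{\tilde{D}}^{2}$ is a section of a line bundle on the irreducible variety $W$ vanishing on the dense open set $W\setminus\tilde{D}$, hence identically zero, so $t=c\,s_{\tilde{D}}^{2}$. Combined with the injection of the previous paragraph and the existence of the nonzero tautological section of $\mathcal{O}(D_{E})$, this gives $\dim_{\mathbb C}H^{0}(E^{[n]},\mathcal{O}_{E^{[n]}}(D_{E}))=1$.

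The step I expect to be the main obstacle is the identification $q^{\ast}D_{E}=2\tilde{D}$, i.e.\ pinning down that $q$ is ramified of order exactly $2$ along the exceptional divisor; everything else is Hartogs-type extension together with the triviality of $H^{0}(\mathcal{O})$ on compact connected varieties. Making the ramification statement precise requires the local description of the $\mathcal{S}_{n}$-action on $W$ near $\tilde{D}$ — concretely, that a transposition acts trivially on the exceptional $\mathbb{P}^{1}$ over the small diagonal — which is the one place where some care with the geometry of the Hilbert--Chow morphism is needed.
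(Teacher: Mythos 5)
Your argument is correct, and it rests on the same basic mechanism as the paper's proof: a global section of ${\mathcal O}(D_{E})$ is determined by its restriction to the locus where the bundle is trivial, where it becomes a holomorphic function that extends across a codimension-two set on a compact model and is therefore constant. The difference is in which compact model you use and how much geometry you invoke to get there. The paper restricts directly to $E^{[n]}\setminus D_{E}$, identifies this with $E^{(n)}\setminus\Delta_{E}^{(n)}$ via the Hilbert--Chow morphism, and uses the normality and compactness of the symmetric product $E^{(n)}$ to conclude $\Gamma(E^{[n]}\setminus D_{E},{\mathcal O}(D_{E}))\simeq H^{0}(E^{(n)},{\mathcal O}_{E^{(n)}})\simeq{\mathbb C}$; no blow-up and no ramification computation appear. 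You instead pass through $E^{[n]}_{\ast}\simeq {\rm Blow}_{\Delta_{E\ast}^{n}}E^{n}_{\ast}/{\mathcal S}_{n}$, identify $q^{\ast}D_{E}=2\tilde{D}$ via the order-two ramification of the quotient map along the exceptional divisor, and then run the Hartogs argument upstairs on the smooth compact $E^{n}$. Your ramification computation is correct (the transposition acts by $-1$ on the two-dimensional normal bundle of the pairwise diagonal, hence trivially on its projectivisation, so it fixes $\tilde{D}$ pointwise and the quotient is ramified to order exactly $2$ there), but it is exactly the step the paper's route renders unnecessary: working on the normal variety $E^{(n)}$ rather than on a resolution of its quotient singularities lets one avoid the blow-up entirely. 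What your approach buys in exchange is that it never appeals to the normality of $E^{(n)}$ and records the useful fact that $q^{\ast}{\mathcal O}(D_{E})={\mathcal O}_{W}(2\tilde{D})$, i.e.\ the $2$-divisibility of $D_{E}$ upstairs, which is of independent interest elsewhere in this circle of ideas.
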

\begin{proof}
Since $D_{E}$ is effective,
we obtain dim$_{{\mathbb C}}H^{0}(E^{[n]},{\mathcal O}_{E^{[n]}}(D_{E}))\geq 1$.
Since the codimension of $\Delta _{E}^{(n)}$ is $2$ in $E^{(n)}$, and $E^{(n)}$ is normal,
we have 
\[ H^{0}(E^{(n)},{\mathcal O}_{E^{(n)}})=\Gamma (E^{(n)}\setminus \Delta _{E}^{(n)},{\mathcal O}_{E^{(n)}}).\]
Since $\pi _{E}|_{E^{[n]}\backslash D_{E}}:E^{[n]}\backslash D_{E}\simeq E^{(n)}\backslash \Delta _{E}^{(n)}$, and ${\mathcal O}_{E^{[n]}}(D_{E})\simeq {\mathcal O}_{E^{[n]}}$ on $E^{[n]}\setminus D_{E}$,
we have
\[ (\pi _{E})_{\ast }({\mathcal O}_{E^{[n]}}(D_{E}))\simeq {\mathcal O}_{E^{(n)}}\ {\rm on}\ E^{(n)}\setminus \Delta _{E}^{(n)}. \]
Hence 
\[ \Gamma (E^{[n]}\setminus D_{E},{\mathcal O}_{E^{[n]}}(D_{E}))\simeq H^{0}(E^{(n)},{\mathcal O}_{E^{(n)}}). \]
Since $E^{(n)}$ is compact, we have $H^{0}(E^{(n)},{\mathcal O}_{E^{(n)}})\simeq {\mathbb C}$. Therefore we have
\[ {\rm dim}_{\mathbb C}\Gamma (E^{[n]}\setminus D_{E},{\mathcal O}_{E^{[n]}}(D_{E}))=1. \]
Thus we obtain ${\rm dim}_{{\mathbb C}}H^{0}(E^{[n]},{\mathcal O}_{E^{[n]}}(\pi ^{\ast }(D_{E})))=1$.
\end{proof}

\begin{mar}\label{dfn:1001}
{\rm Then\ by} ${\rm Proposition}\,\ref{dfn:80}$, {\rm for\ an\ automorphism} $\varphi \in$ {\rm Aut}($E^{[n]}$), {\rm the\ condition} $\varphi ^{\ast }({\mathcal O}_{E^{[n]}}(D_{E}))={\mathcal O}_{E^{[n]}}(D_{E})$ {\rm is\ equivalent\ to\ the\ condition} $\varphi (D_{E})=D_{E}$.
\end{mar}

Recall that $\pi \circ \omega :K^{n}\setminus \Gamma _{K}\rightarrow E^{[n]}\setminus D_{E}$ is the universal covering space.
\begin{thm}\label{thm:2}
Let $E$ be an Enriques surface, $D_{E}$ the exceptional divisor of the Hilbert-Chow morphism $\pi _{E}:E^{[n]}\rightarrow E^{(n)}$. An automorphism $f$ of $E^{[n]}$ is natural if and only if $f(D_{E})=D_{E}$, i.e. $f^{\ast }({\mathcal O}_{E^{[n]}}(D_{E}))={\mathcal O}_{E^{[n]}}(D_{E})$. 
\end{thm}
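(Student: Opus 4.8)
The statement has two directions; the forward one is immediate, and the content is the converse. If $f=g^{[n]}$ for some $g\in\mathrm{Aut}(E)$, then $f$ carries a reduced length-$n$ subscheme to a reduced one, hence preserves the locus of non-reduced subschemes, which is exactly $D_{E}$; so $f(D_{E})=D_{E}$, and by Remark~\ref{dfn:1001} this is equivalent to $f^{*}\mathcal O_{E^{[n]}}(D_{E})\cong\mathcal O_{E^{[n]}}(D_{E})$.

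For the converse, suppose $f(D_{E})=D_{E}$, so that $f$ restricts to an automorphism of $U:=E^{[n]}\setminus D_{E}=E^{(n)}\setminus\Delta_{E}^{(n)}$. The plan is to lift this automorphism along the universal covering $\pi\circ\omega\colon K^{n}\setminus\Gamma_{K}\to U$ of Section~2, whose covering transformation group $G$ has order $2^{n}n!$ (Proposition~\ref{dfn:3}), and which, under the identification of $\mathrm{Aut}(K^{n})$ with the wreath product $\mathrm{Aut}(K)\wr\mathcal S_{n}$ (the factors of $K^{n}$ being intrinsic), coincides with $\langle\sigma\rangle\wr\mathcal S_{n}$, where $\sigma$ is the covering involution of $\mu\colon K\to E$. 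Since $K^{n}\setminus\Gamma_{K}$ is simply connected, $f|_{U}$ admits a lift $\widetilde f\in\mathrm{Aut}(K^{n}\setminus\Gamma_{K})$ with $\pi\circ\omega\circ\widetilde f=f\circ(\pi\circ\omega)$; and $\widetilde f$ normalizes $G$, since the conjugate of a deck transformation by a lift of an automorphism is again a deck transformation.

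The next step, which I expect to be the main obstacle, is to extend $\widetilde f$ to an automorphism of $K^{n}$ — equivalently, to show that $\widetilde f$ is not merely a pseudo-automorphism of the Calabi--Yau variety $K^{n}$. Here one uses that $\Gamma_{K}$ has codimension $2$ and $K^{n}$ is smooth, so that a line bundle on $K^{n}$ has the same global sections as its restriction to $K^{n}\setminus\Gamma_{K}$: taking $\mathcal A$ ample on $E^{(n)}=K^{n}/G$, so that $p_{G}^{*}\mathcal A$ is ample ($p_{G}$ being finite), and running the global-generation argument in the proof of Theorem~\ref{thm:10} with $\widetilde f^{*}(p_{G}^{*}\mathcal A)$ in place of the line bundle used there, one obtains a morphism $K^{n}\to K^{n}$ restricting to $\widetilde f$ off $\Gamma_{K}$; applying the same to $\widetilde f^{-1}$ gives $\widetilde f\in\mathrm{Aut}(K^{n})$, still normalizing $G$. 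It is precisely at this point that the argument avoids appealing to a Torelli theorem for Enriques surfaces, by invoking the result of Oguiso~[\ref{bio:4},\,{\rm Proposition}\,4.4].

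Granting the extension, write $\widetilde f=(a_{1},\dots,a_{n};\tau)$ in the wreath product $\mathrm{Aut}(K)\wr\mathcal S_{n}$. Imposing $\widetilde f\,\sigma_{i}\,\widetilde f^{-1}\in G$ for each $i$ forces every $a_{i}$ to commute with $\sigma$, and imposing $\widetilde f\,(i\,j)\,\widetilde f^{-1}\in G$ for transpositions $(i\,j)\in\mathcal S_{n}\subset G$ forces $a_{i}a_{j}^{-1}\in\langle\sigma\rangle$ for all $i,j$. Composing $\widetilde f$ on either side with a suitable element of $G$ — which changes neither the induced automorphism of $U=(K^{n}\setminus\Gamma_{K})/G$ nor the property of being a lift of $f|_{U}$ — we may therefore assume $\widetilde f=a^{\times n}$ acts diagonally by a single $a\in\mathrm{Aut}(K)$ with $a\sigma=\sigma a$. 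Such an $a$ descends to $\bar a\in\mathrm{Aut}(E)$, the diagonal automorphism $a^{\times n}$ of $K^{n}$ descends through $K^{n}\to E^{n}\to E^{(n)}$ to $\bar a^{(n)}$, and the natural automorphism $\bar a^{[n]}$ of $E^{[n]}$ restricts on $U$ to the same automorphism of $E^{(n)}\setminus\Delta_{E}^{(n)}$ as $f$, since both lift it along $\pi\circ\omega$. As $U$ is dense in $E^{[n]}$ and $E^{[n]}$ is separated, $f=\bar a^{[n]}$, so $f$ is natural.
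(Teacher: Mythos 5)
Your proposal is correct and follows essentially the same route as the paper: restrict $f$ to $E^{[n]}\setminus D_{E}$, lift it to $K^{n}\setminus\Gamma_{K}$, use Oguiso's structure result to extend the lift to an automorphism of $K^{n}$ of wreath-product form, reduce modulo the deck group $G$ to a diagonal automorphism $a^{\times n}$ commuting with $\sigma$, and conclude by density that $f=\bar a^{[n]}$. The only divergences are minor: you obtain $a_{i}a_{j}^{-1}\in\langle\sigma\rangle$ purely group-theoretically from the condition that the lift normalizes $G$, whereas the paper's Proposition~\ref{dfn:12} proves this by a geometric argument (producing a point of $K^{n}\setminus\Gamma_{K}$ that would be sent into $\Gamma_{K}$) and only uses the normalizer condition for the commutation $g_{1}\circ\sigma=\sigma\circ g_{1}$; and your global-generation detour in the extension step is unnecessary (and not fully justified as sketched, since global generation of the extended line bundle is not established) because the cited result of Oguiso already upgrades the birational self-map of $K^{n}$ to a biregular one of the required form, which is exactly how the paper proceeds.
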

\begin{proof}
Let $f$ be an automorphism of $E^{[n]}$ with $f(D_{E})=D_{E}$.
Then $f$ induces an automorphism of $E^{[n]}\backslash D_{E}$.
Since the uniqueness of the universal covering space, there is an automorphism $g$ of $K^{n}\backslash \Gamma _{K}$ such that $\pi \circ \omega \circ g=f\circ \pi \circ \omega $:
$$
\xymatrix{
K^{n}\setminus \Gamma _{K} \ar[d]^{\pi \circ \omega } \ar[r]^{g} &K^{n}\setminus \Gamma _{K} \ar[d]^{\pi \circ \omega } \\
E^{[n]}\setminus D_{E} \ar[r]^{f} &E^{[n]}\setminus D_{E}.
}
$$
Since $\Gamma _{K}$ is an analytic set of codimension $2$, and $K^{n}$ is projective, $g$ can be extended to a birational automorphism of $K^{n}$. By Oguiso $[\ref{bio:4},\,{\rm Proposition}\,4.1]$,
$g$ is an automorphism of $K^{n}$, and 
there are some automorphisms $g_{1},\ldots,g_{n}\in $ Aut$(K)$ and $s\in {\mathcal S}_{n}$ such that $g=s\circ g_{1}\times \cdots\times g_{n}$. Since ${\mathcal S}_{n}\subset G$, we can assume that $g=g_{1}\times \cdots\times g_{n}$.
\\

Recall that 
we denote the covering transformation group of $\pi \circ \omega $ by:  
\[ G:=\{g\in {\rm Aut}(K^{n}\setminus \Gamma _{K}):\pi \circ \omega \circ g=\pi \circ \omega \}. \] 
By Proposition $\ref{dfn:12}$ below,
we have $g_{i}=g_{1}$ or $g_{1}\circ \sigma $ for $1\leq i\leq n$ and $g_{1}\circ \sigma =\sigma \circ g_{1}$.
We denote $g_{1}^{[n]}$ the induced automorphism of $E^{[n]}$ given by $g_{1}$. Then ${g_{1}}^{[n]}|_{E^{[n]}\backslash D_{E}}=f|_{E^{[n]}\backslash D_{E}}$. Thus $g_{1}^{[n]}=f$, so $f$ is natural. The other implication is obvious.
\end{proof}
\begin{pro}\label{dfn:12}
In the proof of Theorem $4.3$, we have $g_{i}=g_{1}$ or $g_{i}=g_{1}\circ \sigma $ for each $1\leq i\leq n$. Moreover $g_{1}\circ \sigma =\sigma \circ g_{1}$.
\end{pro}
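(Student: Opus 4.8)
The plan is to analyze the automorphism $g = g_1 \times \cdots \times g_n$ of $K^n$ obtained in the proof of Theorem~4.3 by exploiting the fact that $g$, together with the covering group $G$, must preserve the ramification/incidence structure of the covering $\pi\circ\omega$, equivalently (via Theorem~2.7 and the diagram there) the structure of $K^n/H \to E^{(n)}$. Concretely, the condition $f(D_E)=D_E$ means $f$ restricts to an automorphism of $E^{[n]}\setminus D_E = E^{(n)}\setminus \Delta_E^{(n)}$, and the lifted $g$ must be compatible with the Galois action of $G$ on $K^n\setminus\Gamma_K$; that is, $g$ normalizes $G$ inside $\mathrm{Aut}(K^n\setminus\Gamma_K)$, and hence (by taking closures, as in the proof of Theorem~4.3) $g$ normalizes $G$ inside $\mathrm{Aut}(K^n)$. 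I would first record this normalization property carefully.

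Next I would extract the two assertions from normalization. Conjugating the transpositions $(i,j)\in \mathcal{S}_n\subset G$ by $g=g_1\times\cdots\times g_n$ gives $g\circ(i,j)\circ g^{-1}$, which swaps the $i$-th and $j$-th factors but through the maps $g_i, g_j$; for this to lie in $G$ — which, by Proposition~2.1, consists of elements $s\circ t$ with $s\in\mathcal{S}_n$ and $t$ a product of some $\sigma$'s on individual factors — one is forced to have $g_j^{-1}g_i \in \{\mathrm{id},\sigma\}$ for all $i,j$. Fixing $i=1$ this yields $g_i = g_1$ or $g_i = g_1\circ\sigma$ for every $i$, the first claim. (One should double-check that $g$ itself need not lie in $G$, only normalize it; the computation still pins down the $g_i$ up to post-composition with $\sigma$ because $G$ contains all the $\sigma_{i_1\ldots i_k}$ by Proposition~2.1.)

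For the second claim, $g_1\circ\sigma = \sigma\circ g_1$, I would use that $g$ conjugates the covering involution-type elements $\sigma_{ij}\in G$ into $G$ again: $g\circ\sigma_{ij}\circ g^{-1}$ acts on the $i$-th factor by $g_i\circ\sigma\circ g_i^{-1}$ and on the $j$-th factor by $g_j\circ\sigma\circ g_j^{-1}$, identity elsewhere, and this must equal some $\sigma_{i_1\ldots i_k}$, i.e.\ an honest product of copies of $\sigma$ on single factors. Hence $g_i\circ\sigma\circ g_i^{-1} = \sigma$ as automorphisms of $K$, which is exactly $g_i\circ\sigma = \sigma\circ g_i$; combined with the first claim this gives $g_1\circ\sigma=\sigma\circ g_1$. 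Alternatively, and perhaps more cleanly, one notes that $\sigma$ is the deck transformation of the universal cover $\mu\colon K\to E$ and is therefore central in the normalizer of $\langle\sigma\rangle$ in $\mathrm{Aut}(K)$ among elements descending to $E$; since $g_1$ descends to the automorphism $f$ of $E^{[n]}\setminus D_E$ on the relevant locus, it commutes with $\sigma$.

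I expect the main obstacle to be the bookkeeping in the conjugation computations: one must track precisely how an element $g_1\times\cdots\times g_n$ conjugates a composite $s\circ t\in G$ and verify that membership in $G$ (characterized by Proposition~2.1 as $\mathcal{S}_n$ composed with products of factor-wise $\sigma$'s) forces each local discrepancy $g_j^{-1}g_i$ to be a deck transformation of $\mu$. A secondary subtlety is justifying that $g$ normalizes $G$ in the first place — this comes from $g$ descending to an automorphism $f$ of the base $E^{[n]}\setminus D_E$ and uniqueness of lifts along the Galois cover $\pi\circ\omega$ — and that this normalization persists after extending $g$ to all of $K^n$ via Oguiso's Proposition~4.1, which is the step already carried out in the proof of Theorem~4.3.
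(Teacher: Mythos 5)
Your proposal is correct, and for the second assertion it is essentially the paper's own argument: the paper uses that the lifts of $f$ form a single coset $\{g\circ u:u\in G\}=\{u\circ g:u\in G\}$, conjugates $\sigma_{1}$ by $g$, and identifies the result inside $G$ component by component, which is the same normalization principle you invoke when you conjugate $\sigma_{ij}$. For the first assertion, however, you take a genuinely different route. The paper argues by contradiction and purely geometrically: assuming $g_{2}\neq g_{1}$ and $g_{2}\neq g_{1}\circ\sigma$, it chooses $x'\in K$ so that $x_{1}=h_{1}(x')$ and $x_{2}=h_{2}(x')$ (with $h_{i}=g_{i}^{-1}$) can be completed to a point of $K^{n}\setminus\Gamma_{K}$, and then observes that the image of this point under $g$ has its first two coordinates both equal to $x'$, hence lies in $\Gamma_{K}$ --- contradicting that $g$ is an automorphism of $K^{n}\setminus\Gamma_{K}$. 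You instead conjugate the transpositions $(i,j)\in G$ by $g$ and read off from Proposition 2.1 that the local discrepancy between $g_{i}$ and $g_{j}$ must be a deck transformation of $\mu$; this is valid (a minor bookkeeping point: conjugating by $g$ pins down $g_{i}\circ g_{j}^{-1}\in\{\mathrm{id},\sigma\}$ while conjugating by $g^{-1}$ pins down $g_{j}^{-1}\circ g_{i}$, and the two resulting forms of the conclusion only coincide once commutativity with $\sigma$ is established, which you do). Your approach has the merit of deriving both assertions from the single fact that $g$ normalizes $G$, while the paper's first step uses only the weaker fact that $g$ preserves $K^{n}\setminus\Gamma_{K}$. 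One caveat: your parenthetical alternative for the commutativity (that $g_{1}$ ``descends to $E$'' and hence commutes with $\sigma$) is circular as stated, since descending to $E$ is equivalent to normalizing $\langle\sigma\rangle$; but your main conjugation argument does not rely on it.
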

\begin{proof}
We show the first assertion by contradiction.
Without loss of generality, we may assume that $g_{2}\not=g_{1}$ and $g_{2}\not=g_{1}\circ \sigma $.
Let $h_{1}$ and $h_{2}$ be two morphisms of $K$ where $g_{i}\circ h_{i}={\rm id}_{K}$ and $h_{i}\circ g_{i}={\rm id}_{K}$ for $i=1$, $2$. 
We define two morphisms $H_{1,2}$ and $H_{1,2,\sigma }$ from $K$ to $K^{2}$ by following.
\[ H_{1,2}:K\ni x\mapsto (h_{1}(x),h_{2}(x))\in K^{2} \]
\[ H_{1,2,\sigma }:K\ni x\mapsto (h_{1}(x),\sigma \circ h_{2}(x))\in K^{2}. \]
Let $S_{\sigma }:=\{ (x,y) |\,y=\sigma (x)\}$ be the subset of $K^{2}$.
Since $h_{1}\not=h_{2}$ and $h_{1}\not=\sigma \circ h_{2}$, $H_{1,2}^{-1}(\Delta _{K}^{2})\cup H_{1,2,\sigma }^{-1}(S_{\sigma })$ do not coincide $K$. Thus there is $x'\in K$ such that $H_{1,2}(x')\not\in \Delta _{K}^{2}$ and $H_{1,2,\sigma }(x')\not\in S_{\sigma }$. 
For $x'\in K$, we put $x_{i}:=h_{i}(x')\in K$ for $i=1$, $2$. Then there are some elements $x_{3},\ldots ,x_{n}\in K$ such that $(x_{1},\ldots ,x_{n})\in K^{n}\setminus \Gamma _{K}$.  
We have $g((x_{1},\ldots ,x_{n}))\not\in K^{n}\backslash \Gamma _{K}$ by the assumption of $x_{1}$ and $x_{2}$. It is contradiction, because $g$ is an automorphism of $K^{n}\backslash \Gamma _{K}$. Thus we have $g_{i}=g_{1}$ or $g_{i}=g_{1}\circ \sigma $ for $1\leq i\leq n$.

We show the second assertion.
Since the covering transformation group of $\pi \circ \omega $ is $G$,
the liftings of $f$ are given by 
\[ \{ g\circ u: u\in G \}=\{ u\circ g: u\in G \}. \]
Thus for $\sigma _{1}\circ g$, there is an element $\sigma _{i_{1}\cdots i_{k}}\circ s$ of $G$ where $s\in {\mathcal S}_{n}$ and $t\in \{\sigma _{i_{1}\ldots i_{k}}\}_{1\leq k\leq n,\ 1\leq i_{1}<\ldots <i_{k}\leq n}$ such that $\sigma _{1}\circ g=g\circ \sigma _{i_{1}\cdots i_{k}}\circ s$.
If we think about the first component of $\sigma _{1}\circ g$ and $[\ref{bio:9},\,{\rm Lemma}\,1.2]$, we have $s={\rm id}$ and $t=\sigma _{1}$.
Therefore $g\circ \sigma _{1}\circ g^{-1}=\sigma _{1}$, we have $\sigma \circ g_{1}=g_{1}\circ \sigma $.
\end{proof}

\section{Proof of Theorem $1.3$}
Let $E$ be an Enriques surface, $E^{[n]}$ the Hilbert scheme of $n$ points of $E$, and $\pi :X\rightarrow E^{[n]}$ the universal covering space of $E^{[n]}$ where $X$ is a Calabi-Yau manifold.
First, for $n=2$, we compute the Hode number of $X$.
Next, for $n\geq 3$, we show that the covering involution of $\pi :X\rightarrow E^{[n]}$ acts on $H^{2}(X,{\mathbb C})$ as identity, and by using Theorem $1.2$, we classify automorphisms of $X$ acting on $H^{2}(X,{\mathbb C})$ identically and its order is $2$.
Finally, we show Theorem $1.3$.
\\

We suppose $n=2$. 
Since $E^{2}_{\ast }=E^{2}$, we have
$E^{[2]}=E^{[2]}_{\ast }=$ Blow$_{\Delta _{E}^{2}}E^{2}/{\mathcal S}_{2}$.
Let $\pi :X\rightarrow E^{[2]}$ be the universal covering space of $E^{[2]}$.
Since $K^{2}_{\ast \mu }=K^{2}$ and Proposition $\ref{dfn:30}$, we have 
\[ X\simeq {\rm Blow}_{\Delta _{K}^{2}\cup T}K^{2}/H, \]
where $T:=\{(x,y)\in K^{2}:y=\sigma (x)\}$.
% and ${\rm Blow}_{\Delta _{K}^{2}\cup T}K^{2}/H$ is the blow up of $K^{2}/H$ along $\Delta _{K}^{2}\cup T$.
Let $\eta:{\rm Blow}_{\Delta _{K}^{2}\cup T}K^{2}/H\rightarrow K^{2}/H$ be the natural map.
We put 
\[ D_{\Delta }:=\eta ^{-1}(\Delta _{K}^{2}/H)\ {\rm and}\]  
\[ D_{T}:=\eta ^{-1}(T/H). \] 
For two inclusions 
\[ j_{D_\Delta }:D_{\Delta }\hookrightarrow {\rm Blow}_{\Delta _{K}^{2}\cup T}K^{2}/H,\ {\rm and} \] 
\[ j_{D_{T}}:D_{T}\hookrightarrow {\rm Blow}_{\Delta _{K}^{2}\cup T}K^{2}/H, \]
let $j_{\ast D_{\Delta }}$ be the Gysin morphism
\[ j_{\ast D_{\Delta }}:H^{p}(D_{\Delta },{\mathbb C})\rightarrow H^{p+2}({\rm Blow}_{\Delta _{K}^{2}\cup T}K^{2}/H,{\mathbb C}), \]
$j_{\ast D_{T}}$ the Gysin morphism
\[ j_{\ast D_{T}}:H^{p}(D_{T},{\mathbb C})\rightarrow H^{p+2}({\rm Blow}_{\Delta _{K}^{2}\cup T}K^{2}/H,{\mathbb C}), {\rm and} \]
\[ \psi:=\eta ^{\ast }+j_{\ast D_{\Delta }}\circ \eta |_{D_{\Delta }}^{\ast }+j_{\ast D_{T}}\circ \eta |_{D_{T}}^{\ast } \]
morphisms from $H^{p}(K^{2}/H,{\mathbb C})\oplus H^{p-2}(\Delta _{K}^{2}/H,{\mathbb C})\oplus H^{p-2}(T/H,{\mathbb C})$ to\\
$H^{p}({\rm Blow}_{\Delta _{K}^{2}\cup T}K^{2}/H,{\mathbb C}).$
%Since $K^{2}$ is a compact K\"ahler manifold and each codimension of $\widetilde{\Delta }_{K\ast \mu }$ and $T_{\ast \mu }$ is $2$, 
From $[\ref{bio:30},\,{\rm Theorem}\,7.31],$ we have isomorphisms of Hodge structure on
$H^{k}({\rm Blow}_{\Delta _{K}^{2}\cup T}K^{2}/H,{\mathbb C})$ by $\psi $:
\begin{equation}
\label{eq:20}
H^{k}(K^{2}/H,{\mathbb C})\oplus H^{k-2}(\Delta _{K}^{2}/H,{\mathbb C})\oplus H^{k-2}(T/H,{\mathbb C})\simeq H^{k}({\rm Blow}_{\Delta _{K}^{2}\cup T}K^{2}/H,{\mathbb C}).
\end{equation}
For algebraic variety $Y$,
let $h^{p,q}(Y)$ be the number $h^{p,q}(Y)=$ dim$_{{\mathbb C}}H^{p+q}(Y,{\mathbb C})^{p,q}$.
\begin{thm}\label{dfn:91}
For the universal covering space $\pi :X\rightarrow E^{[2]}$, we have $h^{0,0}(X)=1$, $h^{1,0}(X)=0$, $h^{2,0}(X)=0$, $h^{1,1}(X)=12$,
$h^{3,0}(X)=0$, $h^{2,1}(X)=0$, $h^{4,0}(X)=1$, $h^{3,1}(X)=10$, and $h^{2,2}(X)=131$.
\end{thm}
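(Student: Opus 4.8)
The goal is to compute all Hodge numbers $h^{p,q}(X)$ for $n=2$ using the blow-up formula $(\ref{eq:20})$, so the whole computation reduces to knowing the Hodge structures of the three pieces $K^{2}/H$, $\Delta_{K}^{2}/H$ and $T/H$. First I would record the Hodge numbers of $K$ itself ($h^{0,0}=h^{2,0}=h^{0,2}=1$, $h^{1,1}=20$, $h^{1,0}=h^{0,1}=0$, $h^{2,2}=1$) and of $E$ ($h^{0,0}=h^{2,2}=h^{1,1}=1$, with $h^{1,0}=h^{2,0}=0$; note $h^{1,1}(E)=10$). Since $H$ acts on $K^{2}$ and $H^{k}(K^{2}/H,{\mathbb C})=H^{k}(K^{2},{\mathbb C})^{H}$, I would compute $H^{k}(K^{2},{\mathbb C})$ via the K\"unneth formula and then take the $H$-invariant part. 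Here $H\subset G$ is generated by the transposition $(1,2)\in{\mathcal S}_{2}$ and by $\sigma_{12}=\sigma\times\sigma$; so a class is $H$-invariant iff it is symmetric under swapping the two $K$-factors and invariant under the diagonal action of $\sigma$. Because $\sigma$ acts on $H^{0}(K)$ and $H^{4}(K)$ trivially, on $H^{2,0}(K)\oplus H^{0,2}(K)$ by $-1$ (the anti-invariant holomorphic $2$-form is what descends to $E$), and on $H^{1,1}(K)$ with a $10$-dimensional invariant part (that is $H^{1,1}(E)$) and a $10$-dimensional anti-invariant part, one gets all the $(\sigma\times\sigma)$-invariant, ${\mathcal S}_{2}$-symmetric pieces by bookkeeping. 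I would tabulate $h^{p,q}(K^{2}/H)$ degree by degree this way.

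Next I would identify $\Delta_{K}^{2}/H$ and $T/H$. The diagonal $\Delta_{K}^{2}\cong K$ and the graph $T=\{(x,\sigma(x))\}\cong K$ as well; the residual action of $H$ on each is by $\sigma$ (the transposition $(1,2)$ fixes $\Delta_{K}^{2}$ pointwise and swaps $T$'s coordinates which on the graph is again $\sigma$, while $\sigma_{12}$ restricted to $\Delta_{K}^{2}$ is $\sigma$ and restricted to $T$ is also $\sigma$). Hence $\Delta_{K}^{2}/H\cong K/\sigma=E$ and likewise $T/H\cong E$, so $H^{k-2}(\Delta_{K}^{2}/H,{\mathbb C})\oplus H^{k-2}(T/H,{\mathbb C})\cong H^{k-2}(E,{\mathbb C})^{\oplus 2}$ with the Hodge numbers of $E$ doubled. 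Assembling $(\ref{eq:20})$ in each degree $k=0,\dots,8$ then yields all the $h^{p,q}(X)$; I would double-check the answer against the known constraints for a Calabi-Yau fourfold (simply connected so $h^{1,0}=h^{3,0}=0$, Serre duality $h^{p,q}=h^{4-p,4-q}$, $h^{4,0}=1$, and the Euler characteristic) and against dimension counts like $h^{3,1}(X)=10$, which must equal $h^{1}(X,T_{X})=10$ from Theorem $\ref{thm:1}$.

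The main obstacle I anticipate is pinning down the $\sigma$-action on $H^{1,1}(K)$ — specifically that the invariant part has dimension exactly $10$ and the anti-invariant part exactly $10$, and correctly threading this through the K\"unneth decomposition of $H^{2}(K^{2})$, $H^{3}(K^{2})$ and $H^{4}(K^{2})$ together with the ${\mathcal S}_{2}$-symmetrization. The bookkeeping for $h^{2,2}(X)=131$ in particular requires combining: the ${\mathcal S}_{2}$-symmetric $(\sigma\times\sigma)$-invariant part of $H^{2,2}(K^{2})=\big(H^{2,0}\otimes H^{0,2}\big)\oplus\big(H^{1,1}\otimes H^{1,1}\big)\oplus\big(H^{0,2}\otimes H^{2,0}\big)\oplus\big(H^{2,2}\otimes H^{0,0}\big)\oplus\big(H^{0,0}\otimes H^{2,2}\big)$, plus $h^{1,1}(E)=10$ from each of the two exceptional divisors, and I would be careful with signs since the invariant part of $H^{2,0}(K)\otimes H^{0,2}(K)$ under $\sigma\times\sigma$ survives (each factor picks up $-1$, product picks up $+1$) while its symmetrized contribution needs the ${\mathcal S}_{2}$-swap analyzed too. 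Once these local linear-algebra facts are fixed, the rest is a routine, if lengthy, sum over degrees.
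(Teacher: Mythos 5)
Your proposal follows essentially the same route as the paper: identify $\Delta_{K}^{2}/H\cong E$ and $T/H\cong E$, compute $H^{k}(K^{2}/H,{\mathbb C})$ as the $H$-invariant part of the K\"unneth decomposition using the $\sigma$-eigenspace dimensions $h^{1,1}_{\pm}(K)=10$ and $h^{2,0}_{-}(K)=1$, and then assemble everything via the blow-up formula $(\ref{eq:20})$. The bookkeeping is correct (your parenthetical ``$h^{1,1}(E)=1$'' is just a slip, immediately corrected to $10$), so this matches the paper's proof.
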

\begin{proof}
Let $\sigma $ be the covering involution of $\mu :K\rightarrow E$. Put
\[ H^{k}_{\pm }(K,{\mathbb C})^{p,q}:=\{ \alpha \in H^{k}(K,{\mathbb C})^{p,q}:\sigma ^{\ast }(\alpha )=\pm \alpha  \}\,{\rm and}  \] 
\[ h^{p,q}_{\pm }(K):={\rm dim}_{\mathbb C}H^{k}_{\pm }(K,{\mathbb C})^{p,q}.  \]
Then for an Enriques surface $E\simeq K/\langle\sigma \rangle$, we have
\[ H^{k}(E,{\mathbb C})^{p,q}\simeq H^{k}_{+}(K,{\mathbb C})^{p,q}. \]
Since $K$ is a $K3$ surface, we have 
\[ h^{0,0}(K)=1,\ h^{1,0}(K)=0,\ h^{2,0}(K)=1,\ {\rm and}\ h^{1,1}(K)=20,\,{\rm and} \]
\[ h_{+}^{0,0}(K)=1,\,h_{+}^{1,0}(K)=0,\,h_{+}^{2,0}(K)=0,\,{\rm and}\,h_{+}^{1,1}(K)=10,\,{\rm and} \]
\[ h^{0,0}_{-}(K)=0,\,h^{1,0}_{-}(K)=0,\,h^{2,0}_{-}(K)=1,\,{\rm and}\,h^{2,0}_{-}(K)=10. \]
Since $n=2$,
we obtain $\Delta _{K}^{2}/H\simeq E$ and $T/H\simeq E$.
Thus we have
\[ h^{0,0}(\Delta _{K}^{2}/H)=1,\,h^{1,0}(\Delta _{K}^{2}/H)=0,\,h^{2,0}(\Delta _{K}^{2}/H)=0,\,{\rm and}\,h^{1,1}(\Delta _{K}^{2}/H)=10, \]
and we have
\[ h^{0,0}(T/H)=1,\,h^{1,0}(T/H)=0,\,h^{2,0}(T/H)=0,\,{\rm and}\,h^{1,1}(T/H)=10. \]
By the definition of $H$, we obtain $H=\langle{\mathcal S}_{2},\sigma _{1,2}\rangle$.
From the K\"unneth Theorem, we have 
\[ H^{p+q}(K^{2},{\mathbb C})^{p,q}\simeq \bigoplus _{s+u=p,t+v=q}H^{s+t}(K,{\mathbb C})^{s,t}\otimes H^{u+v}(K,{\mathbb C})^{u,v},\,{\rm and} \]
%Thus we obtain
%\[ h^{0,0}(K^{2})=1,\,h^{1,0}(K^{2})=0,\,h^{2,0}(K^{2})=2,\,h^{1,1}(K^{2})=40,\,h^{3,0}(K^{2})=0,\]
%\[ h^{2,1}(K^{2})=0,\,h^{4,0}(K^{2})=1,\,h^{3,1}(K^{2})=40,\,{\rm and}\,h^{2,2}(K^{2})=404. \]
\[ H^{k}(K^{2}/H,{\mathbb C})^{p,q}\simeq \{ \alpha \in H^{k}(K^{2},{\mathbb C})^{p,q}:s^{\ast }(\alpha )=\alpha \,{\rm for}\,s\in {\mathcal S}_{2}\,{\rm and}\,\sigma ^{\ast }_{1,2}(\alpha )=\alpha \}. \]
Thus we obtain
\[ h^{0,0}(K^{2}/H)=1,\,h^{1,0}(K^{2}/H)=0,\,h^{2,0}(K^{2}/H)=0,\,h^{1,1}(K^{2}/H)=10,\]
\[ h^{3,0}(K^{2}/H)=0,\,h^{2,1}(K^{2}/H)=0,\,h^{4,0}(K^{2}/H)=1, \]
\[ h^{3,1}(K^{2}/H)=10,\, {\rm and}\,h^{2,2}(K^{2}/H)^{2,2}=111. \]
Specially, we fix a basis $\beta $ of $H^{2}(K,{\mathbb C})^{2,0}$ and a basis $\{\gamma _{i}\}_{i=1}^{10}$ of $H^{2}_{-}(K,{\mathbb C})^{1,1}$,
then we have 
\begin{equation}
\label{eq:30}
H^{4}(K^{2}/H,{\mathbb C})^{3,1}\simeq \bigoplus _{i=1}^{10}{\mathbb C}(\beta \otimes \gamma _{i}+\gamma _{i}\otimes \beta ).
\end{equation}
By the above equation $(\ref{eq:20})$, we have
\[ h^{0,0}({\rm Blow}_{\Delta _{K}^{2}\cup T}K^{2}/H)=1,\,h^{1,0}({\rm Blow}_{\Delta _{K}^{2}\cup T}K^{2}/H)=0,\]
\[ h^{2,0}({\rm Blow}_{\Delta _{K}^{2}\cup T}K^{2}/H)=0,\,h^{1,1}({\rm Blow}_{\Delta _{K}^{2}\cup T}K^{2}/H)=12,\]
\[ h^{3,0}({\rm Blow}_{\Delta _{K}^{2}\cup T}K^{2}/H)=0,\,h^{2,1}({\rm Blow}_{\Delta _{K}^{2}\cup T}K^{2}/H)=0, \]
\[ h^{4,0}({\rm Blow}_{\Delta _{K}^{2}\cup T}K^{2}/H)=1,\,h^{3,1}({\rm Blow}_{\Delta _{K}^{2}\cup T}K^{2}/H)=10,\,{\rm and}\]
\[ h^{2,2}({\rm Blow}_{\Delta _{K}^{2}\cup T}K^{2}/H)=131. \]
Thus we obtain
$h^{0,0}(X)=1$, $h^{1,0}(X)=0$, $h^{2,0}(X)=0$, $h^{1,1}(X)=12$,
$h^{3,0}(X)=0$, $h^{2,1}(X)=0$, $h^{4,0}(X)=1$, $h^{3,1}(X)=10$, and $h^{2,2}(X)=131$.
\end{proof}

We show that for $n\geq 3$, the covering involution of $\pi :X\rightarrow E^{[n]}$ acts on $H^{2}(X,{\mathbb C})$ as identity, by using Theorem $1.2$ we classify automorphisms of $X$ acting on $H^{2}(X,{\mathbb C})$ identically and its order is $2$, and Theorem $1.3$ from here.
\begin{lem}\label{dfn:90}
Let $X$ be a smooth complex manifold, $Z\subset X$ a closed submanifold with codimension is $2$, $\tau :X_{Z}\rightarrow X$ the blow up of $X$ along $Z$, $E=\tau ^{-1}(Z)$ the exceptional divisor, and $h$ the first Chern class of the line bundle ${\mathcal O}_{X_{Z}}(E)$.\\
Then $\tau ^{\ast }:H^{2}(X,{\mathbb C})\rightarrow H^{2}(X_{Z},{\mathbb C})$ is injective, and 
\[ H^{2}(X_{Z},{\mathbb C})\simeq H^{2}(X,{\mathbb C})\oplus {\mathbb C}h. \]
\end{lem}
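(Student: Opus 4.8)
The plan is to invoke the standard description of the cohomology of a blow-up along a smooth center, specialized to codimension $2$. Recall the general formula (Voisin, \emph{Hodge Theory and Complex Algebraic Geometry I}, Theorem 7.31, which is exactly the reference $[\ref{bio:30}]$ already cited in the $n=2$ computation): if $\tau : X_{Z}\to X$ is the blow-up of a smooth variety $X$ along a smooth closed subvariety $Z$ of codimension $c$, with exceptional divisor $E$ and $\tau|_{E}: E\to Z$ the projectivized normal bundle, then there is an isomorphism of Hodge structures
\[
H^{k}(X_{Z},{\mathbb C})\simeq H^{k}(X,{\mathbb C})\oplus\bigoplus_{i=1}^{c-1}H^{k-2i}(Z,{\mathbb C}),
\]
where the map is $\tau^{\ast}$ on the first factor and, on the $i$-th summand, $\xi\mapsto j_{\ast}\bigl(h_{E}^{i-1}\cdot (\tau|_{E})^{\ast}\xi\bigr)$ with $j: E\hookrightarrow X_{Z}$ the inclusion and $h_{E}$ the first Chern class of ${\mathcal O}_{E}(E)$ (the tautological class on the projective bundle). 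In particular $\tau^{\ast}$ is injective, being a direct summand inclusion. First I would state this general fact, citing $[\ref{bio:30},\,{\rm Theorem}\,7.31]$.

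Next I would set $c=2$. Then the sum $\bigoplus_{i=1}^{c-1}$ has a single term, $i=1$, contributing $H^{k-2}(Z,{\mathbb C})$, and the Gysin map on that term is simply $\xi\mapsto j_{\ast}((\tau|_{E})^{\ast}\xi)$. Taking $k=2$, one gets
\[
H^{2}(X_{Z},{\mathbb C})\simeq H^{2}(X,{\mathbb C})\oplus H^{0}(Z,{\mathbb C}).
\]
Since $Z$ is connected (a closed submanifold here; if $Z$ were disconnected the statement would need a component count, but in the application $Z$ is irreducible), $H^{0}(Z,{\mathbb C})\simeq{\mathbb C}$, and the generator $1\in H^{0}(Z,{\mathbb C})$ maps to $j_{\ast}(1) = [E]\in H^{2}(X_{Z},{\mathbb C})$, which is the first Chern class $h$ of ${\mathcal O}_{X_{Z}}(E)$ by definition of the fundamental class of a divisor. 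Hence the second summand is exactly ${\mathbb C}h$, giving $H^{2}(X_{Z},{\mathbb C})\simeq H^{2}(X,{\mathbb C})\oplus{\mathbb C}h$ and the injectivity of $\tau^{\ast}$.

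The argument is essentially a bookkeeping specialization of a cited theorem, so there is no serious obstacle; the only points requiring a word of care are the identification of the $i=1$ Gysin contribution with the class $[E]=c_{1}({\mathcal O}_{X_{Z}}(E))=h$ (which follows from $j_{\ast}1 = [E]$ and the projection formula / self-intersection description of the exceptional divisor), and the implicit connectedness of $Z$ needed for $h^{0}(Z)=1$. If one prefers to avoid appealing to Theorem 7.31 in full strength, an alternative is a direct Mayer--Vietoris or Leray argument: cover $X_{Z}$ by $X_{Z}\setminus E$ (isomorphic to $X\setminus Z$) and a tubular neighborhood of $E$, use that $Z$ has codimension $2$ so that $H^{1}, H^{2}$ of $X$ and $X\setminus Z$ agree up to the class of $Z$, and compute $H^{2}$ of the ${\mathbb P}^{1}$-bundle $E$ over $Z$ by the projective bundle formula $H^{2}(E)\simeq H^{2}(Z)\oplus H^{0}(Z)h_{E}$; but citing the known theorem is cleaner and is consistent with the rest of the paper.
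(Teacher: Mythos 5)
Your proof is correct, but it takes a different route from the paper. You deduce the lemma as a specialization of the general blow-up decomposition $H^{k}(X_{Z},{\mathbb C})\simeq H^{k}(X,{\mathbb C})\oplus\bigoplus_{i=1}^{c-1}H^{k-2i}(Z,{\mathbb C})$ from $[\ref{bio:30},\,{\rm Theorem}\,7.31]$, setting $c=2$, $k=2$ and identifying the single Gysin summand $H^{0}(Z,{\mathbb C})$ with ${\mathbb C}h$ via $j_{\ast}1=[E]=c_{1}({\mathcal O}_{X_{Z}}(E))$. The paper instead argues from scratch: it compares the long exact sequences of the pairs $(X,X\setminus Z)$ and $(X_{Z},X_{Z}\setminus E)$, uses the Thom isomorphism together with the tubular neighborhood and excision theorems to identify $H^{q}(X,X\setminus Z,{\mathbb C})\simeq H^{q-4}(Z,{\mathbb C})$ and $H^{q}(X_{Z},X_{Z}\setminus E,{\mathbb C})\simeq H^{q-2}(E,{\mathbb C})$, and then reads off the injectivity of $\tau^{\ast}$ and the one-dimensional excess from the resulting diagram. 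Your approach is shorter and is consistent with how the paper already uses $[\ref{bio:30},\,{\rm Theorem}\,7.31]$ for the $n=2$ Hodge number computation; its one caveat is that Theorem 7.31 is stated in the compact K\"ahler setting (as an isomorphism of Hodge structures), whereas the lemma is asserted for an arbitrary smooth complex manifold and is applied in Proposition 5.4 to the non-compact $K^{n}_{\ast \mu }$ --- so you should either note that the purely cohomological decomposition in Voisin's proof is topological and needs no compactness, or fall back on the elementary argument, which is presumably why the paper proves the lemma directly. Both arguments also tacitly need $Z$ (equivalently $E$) connected to get a single copy of ${\mathbb C}$; you flag this explicitly, which the paper does not, and in the application the lemma is indeed invoked one irreducible component at a time.
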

\begin{proof}
Let $U:=X\setminus Z$ be an open set of $X$. Then $U$ is isomorphic to an open set $U'=X_{Z}\setminus E$ of $X_{Z}$.
As $\tau $ gives a morphism between the pair $(X_{Z},U')$ and the pair $(X,U)$, we have a morphism $\tau ^{\ast }$ between the long exact sequence of cohomology relative to these pairs:
$$
\xymatrix{
H^{k}(X,U,{\mathbb C}) \ar[r] \ar[d]^{\tau _{X,U}^{\ast }} &H^{k}(X,{\mathbb C}) \ar[r] \ar[d]^{\tau ^{\ast }_{X}} &H^{k}(U,{\mathbb C}) \ar[d]^{\tau _{U}^{\ast }} \ar[r] &H^{k+1}(X,U,{\mathbb C}) \ar[d]^{\tau _{X,U}^{\ast }} \\
H^{k}(X_{Z},U',{\mathbb C}) \ar[r] &H^{k}(X_{Z},{\mathbb C}) \ar[r] &H^{k}(U',{\mathbb C}) \ar[r] &H^{k+1}(X_{Z},U',{\mathbb C}).  
}
$$
By Thom isomorphism, the tubular neighborhood Theorem, and Excision theorem,
we have  
\[ H^{q}(Z,{\mathbb C})\simeq H^{q+4}(X,U,{\mathbb C}),\ {\rm and}\]
\[ H^{q}(E,{\mathbb C})\simeq H^{q+2}(X_{Z},U',{\mathbb C}). \]
In particular, we have 
\[ H^{l}(X,U,{\mathbb C})=0\ {\rm for}\ l=0,1,2,3,\ {\rm and} \]
\[ H^{j}(X_{Z},U',{\mathbb C})=0\ {\rm for}\ l=0,1. \]
Thus we have
$$
\xymatrix{
0 \ar[r] \ar[d]^{\tau _{X,U}^{\ast }} &H^{1}(X,{\mathbb C}) \ar[r] \ar[d]^{\tau ^{\ast }_{X}} &H^{1}(U,{\mathbb C}) \ar[d]^{\tau _{U}^{\ast }} \ar[r] &0 \ar[d]^{\tau _{X,U}^{\ast }} \\
0 \ar[r] &H^{1}(X_{Z},{\mathbb C}) \ar[r] &H^{1}(U',{\mathbb C}) \ar[r] &H^{0}(E,{\mathbb C}),
}
$$
and 
$$
\xymatrix{
0 \ar[r] \ar[d]^{\tau _{X,U}^{\ast }} &H^{2}(X,{\mathbb C}) \ar[r] \ar[d]^{\tau ^{\ast }_{X}} &H^{2}(U,{\mathbb C}) \ar[d]^{\tau _{U}^{\ast }} \ar[r] &0 \ar[d]^{\tau _{X,U}^{\ast }} \\
H^{0}(E,{\mathbb C}) \ar[r] &H^{2}(X_{Z},{\mathbb C}) \ar[r] &H^{2}(U',{\mathbb C}) \ar[r] &H^{3}(X_{Z},U',{\mathbb C}).
}
$$

Since $\tau \mid _{U'}:U'\xrightarrow{\sim}U$, we have isomorphisms $\tau _{U}^{\ast }:H^{k}(U,{\mathbb C})\simeq H^{k}(U',{\mathbb C})$.
Thus we have
\[ {\rm dim}_{{\mathbb C}}H^{2}(X_{Z},{\mathbb C})={\rm dim}_{{\mathbb C}}H^{2}(X,{\mathbb C})+1,\ {\rm and} \] 
\[ \tau ^{\ast }:H^{2}(X,{\mathbb C})\rightarrow H^{2}(X_{Z},{\mathbb C})\ {\rm is\ injective},\]
and therefore we obtain
\[ H^{2}(X_{Z},{\mathbb C})\simeq H^{2}(X,{\mathbb C})\oplus {\mathbb C}h. \]
\end{proof}
\begin{pro}\label{pro:70}
Suppose $n\geq 3$. For the universal covering space $\pi :X\rightarrow E^{[n]}$,  ${\rm dim}_{{\mathbb C}}H^{2}(X,{\mathbb C})=11$.
\end{pro}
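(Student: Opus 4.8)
The plan is to compute $\dim_{\mathbb C} H^2(X,{\mathbb C})$ by combining Theorem~\ref{thm:10} with the cohomological description of blow-ups, exactly parallel to the $n=2$ case treated in Theorem~\ref{dfn:91}, but now tracking only the second cohomology. By Theorem~\ref{thm:10}, $X$ is a resolution $\varphi_X:X\to K^n/H$ with $\varphi_X^{-1}(\Gamma_K/H)=\pi^{-1}(D_E)$; more precisely, working over $E^{[n]}_\ast$ and using Proposition~\ref{dfn:30}, $X\setminus\pi^{-1}(F_E)\simeq {\rm Blow}_{\Delta_{K\ast\mu}\cup T_{\ast\mu}}K^n_{\ast\mu}/H$, and since $F_E$ and $\pi^{-1}(F_E)$ have codimension $\geq 2$, restriction gives an isomorphism on $H^2$. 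So it suffices to compute $H^2$ of this blow-up of $K^n_{\ast\mu}/H$ along two smooth (in the orbifold sense) centers of codimension $2$.

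First I would apply Lemma~\ref{dfn:90} (in its orbifold/quotient form, as was done implicitly for $n=2$): the blow-up contributes one extra class $h$ for each connected component of the exceptional divisor, so
\[
\dim_{\mathbb C} H^2(X,{\mathbb C}) = \dim_{\mathbb C} H^2(K^n/H,{\mathbb C}) + (\text{number of components of the exceptional divisor}).
\]
The two centers $\Delta_{K\ast\mu}/H$ and $T_{\ast\mu}/H$ are each irreducible (the ${\mathcal S}_n$-action, contained in $H$, identifies the various indexed pieces $\Delta_{K\ast\mu\,ij}$ among themselves, and likewise the $T_{\ast\mu\,ij}$), so the exceptional divisor has exactly two components, giving a contribution of $2$.

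Next I would compute $\dim_{\mathbb C} H^2(K^n/H,{\mathbb C}) = \dim_{\mathbb C} H^2(K^n,{\mathbb C})^H$, the $H$-invariants. By the K\"unneth formula $H^2(K^n,{\mathbb C}) \simeq \bigoplus_{i} H^2(K,{\mathbb C})_{(i)} \oplus \bigoplus_{i<j} H^1(K,{\mathbb C})_{(i)}\otimes H^1(K,{\mathbb C})_{(j)}$, and since $H^1(K,{\mathbb C})=0$ for a $K3$ surface, only the first summand survives: $H^2(K^n,{\mathbb C})\simeq \bigoplus_{i=1}^n H^2(K,{\mathbb C})_{(i)}$, a copy of $H^2(K,{\mathbb C})$ in each factor. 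Now I take invariants under $H=\langle {\mathcal S}_n,\{\sigma_{ij}\}\rangle$. The ${\mathcal S}_n$-action permutes the $n$ copies, so an invariant element must be the "diagonal" sum $\sum_{i=1}^n \alpha_{(i)}$ of a single class $\alpha\in H^2(K,{\mathbb C})$. Imposing invariance under $\sigma_{ij}$ forces $\alpha + \sigma^\ast(\alpha)$-type relations, and since $\sigma_{ij}$ acts on the $i$-th and $j$-th factors by $\sigma^\ast$ and fixes the rest, one checks that the diagonal class $\sum_i\alpha_{(i)}$ is $\sigma_{ij}$-invariant if and only if $\sigma^\ast(\alpha)=\alpha$, i.e. $\alpha\in H^2_+(K,{\mathbb C})\simeq H^2(E,{\mathbb C})$. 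Hence $H^2(K^n,{\mathbb C})^H \simeq H^2(E,{\mathbb C})$, which has dimension $10$ (as $h^{1,1}(E)=10$, $h^{2,0}(E)=h^{0,2}(E)=0$). Therefore $\dim_{\mathbb C} H^2(X,{\mathbb C}) = 10 + 1 = 11$.

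The main obstacle I anticipate is the bookkeeping needed to justify applying the blow-up formula on the singular quotient $K^n_{\ast\mu}/H$ rather than on a smooth manifold: one must either argue that $H$ acts freely enough in codimension $\leq 1$ near the blow-up centers (so that $K^n_{\ast\mu}/H$ is smooth along them, which is essentially the content of Lemmas~\ref{dfn:20} and~\ref{dfn:21}, and why the restriction to $E^{[n]}_\ast$ was set up), or invoke the Leray/invariants argument for rational cohomology of quotients together with Lemma~\ref{dfn:90} applied equivariantly upstairs. A secondary point to get right is confirming that passing from $X\setminus\pi^{-1}(F_E)$ to $X$ does not change $H^2$, which follows since $\pi^{-1}(F_E)$ has complex codimension $2$, hence real codimension $4$, in the smooth manifold $X$.
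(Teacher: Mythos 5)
There is a genuine gap, and it sits exactly at the one place where the hypothesis $n\geq 3$ matters. You assert that $\Delta_{K\ast\mu}/H$ and $T_{\ast\mu}/H$ are two distinct irreducible centers, so that the exceptional divisor downstairs has two components and contributes $2$ to $H^{2}$; but then your final line computes $10+1=11$, contradicting your own count ($10+2=12$). The resolution is that your count of components is wrong for $n\geq 3$: the group $H$ does not preserve the $\Delta$-type and $T$-type diagonals separately. The element $(j,j+1)\circ \sigma _{i,j}\circ (j,j+1)$ lies in $H$ (it is a word in ${\mathcal S}_{n}$ and the $\sigma _{ij}$) and maps $\Delta _{K\ast \mu \,ij}$ isomorphically onto $T_{\ast \mu \,ij}$; this is precisely the isomorphism the paper invokes with the words ``Since $n\geq 3$.'' Consequently all $2\binom{n}{2}$ exceptional components upstairs form a single $H$-orbit, the invariant part of $\bigl(\bigoplus _{i<j}{\mathbb C}h_{ij}\bigr)\oplus \bigl(\bigoplus _{i<j}{\mathbb C}k_{ij}\bigr)$ is one-dimensional, and the exceptional divisor in the quotient is irreducible, contributing $1$, not $2$. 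With that correction your computation $\dim _{\mathbb C}H^{2}(K^{n},{\mathbb C})^{H}=\dim _{\mathbb C}H^{2}_{+}(K,{\mathbb C})=10$ plus $1$ gives $11$ as claimed.

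Note that your argument as literally written never uses $n\geq 3$, and the count it actually justifies, $10+2=12$, is the correct answer for $n=2$ (compare Theorem 5.1, where $h^{1,1}(X)=12$): for $n=2$ the transposition $(j,j+1)=(2,3)$ is not available, no element of $H$ interchanges $\Delta _{K}^{2}$ with $T$, and the two centers genuinely survive as distinct components. So the missing identification is not a technicality but the entire content of the proposition. The remaining structure of your argument (restriction to $X\setminus \pi ^{-1}(F_{E})$ via codimension $2$, Proposition 2.6, the blow-up formula of Lemma 5.2 applied upstairs followed by taking $H$-invariants, and the K\"unneth computation of $H^{2}(K^{n},{\mathbb C})^{H}$) matches the paper's proof and is fine.
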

\begin{proof}
Since the codimension of $\pi ^{-1}(F_{E})$ is $2$, $H^{2}(X,{\mathbb C})\cong H^{2}(X\setminus \pi ^{-1}(F_{E}),{\mathbb C})$.
By Proposition $2.6$, $X\setminus \pi ^{-1}(F_{E})\simeq {\rm Blow}_{\Delta _{K\ast \mu }\cup T_{\ast \mu }}K^{n}_{\ast \mu }/H$.
\\
Let $\tau :{\rm Blow}_{\Delta _{K\ast \mu }\cup T_{\ast \mu }}K^{n}_{\ast \mu }\rightarrow K^{n}_{\ast \mu }$ be the blow up of $K^{n}_{\ast \mu }$ along $\Delta _{K\ast \mu }\cup T_{\ast \mu }$, \\
\[ h_{ij}\ {\rm the\ first\ Chern\ class\ of\ the\ line\ bundle}\ {\mathcal O}_{{\rm Blow}_{\Delta _{K\ast \mu }\cup T_{\ast \mu }}K^{n}_{\ast \mu }}(\tau ^{-1}(\Delta _{K\ast \mu \,i,})), \]
and
\[ k_{ij}\ {\rm the\ first\ Chern\ class\ of\ the\ line\ bundle}\ {\mathcal O}_{{\rm Blow}_{\Delta _{K\ast \mu }\cup T_{\ast \mu }}K^{n}_{\ast \mu }}(\tau ^{-1}(T _{K\ast \mu \,ij})).\]
By Lemma $\ref{dfn:90}$, we have
\[ H^{2}({\rm Blow}_{\Delta _{K\ast \mu }\cup T_{\ast \mu }}K^{n}_{\ast \mu },{\mathbb C})\cong H^{2}(K^{n},{\mathbb C})\oplus \biggl{(}\bigoplus _{1\leq i<j\leq n}{\mathbb C}h_{ij}\biggl{)} \oplus \biggl{(}\bigoplus _{1\leq i<j\leq n}{\mathbb C}k_{ij}\biggl{)}. \]
Since $n\geq 3$, there is an isomorphism 
\[ (j,j+1)\circ \sigma _{ij}\circ (j,j+1):\bigtriangleup _{K\ast \mu \,ij}\xrightarrow{\sim}T_{\ast \mu \,ij}. \]
%and by Lemma $\ref{dfn:20}$ and $\ref{dfn:21}$,
Thus we have dim$_{{\mathbb C}}H^{2}({\rm Blow}_{\Delta _{K\ast \mu }\cup T_{\ast \mu }}K^{n}_{\ast \mu }/H,{\mathbb C})=11$, 
i.e.\,${\rm dim}_{{\mathbb C}}H^{2}(X,{\mathbb C})=11$.
\end{proof}
\begin{pro}\label{dfn:13}
${\rm dim}_{{\mathbb C}}H^{0}(X,{\mathcal O}_{X}(\pi ^{\ast }(D_{E})))=1$.
\end{pro}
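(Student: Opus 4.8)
The plan is to mimic the proof of Proposition~\ref{dfn:80}, transporting the computation of global sections of $\mathcal{O}_{E^{[n]}}(D_E)$ up along the covering $\pi\colon X\to E^{[n]}$ to a computation on $X$. The key observation is that $\pi$ is a finite \'etale double cover, so pushing forward is transparent, and that $\pi^{\ast}(D_E)$ restricted to the complement of the codimension-$2$ locus $\pi^{-1}(F_E)$ becomes trivial on the universal covering $(K^n\setminus\Gamma_K)/H$ of $E^{[n]}\setminus D_E$, exactly as in the base case.

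First I would record that $\pi^{-1}(D_E)$ is the exceptional divisor of the resolution $\varphi_X\colon X\to K^n/H$ of Theorem~\ref{thm:10}, so $\mathcal{O}_X(\pi^{\ast}(D_E))\simeq\mathcal{O}_X$ on $X\setminus\pi^{-1}(D_E)\cong(K^n\setminus\Gamma_K)/H$. Next, since $\pi^{\ast}(D_E)$ is effective (it is $\pi^{\ast}$ of an effective divisor), we get $\dim_{\mathbb C}H^0(X,\mathcal{O}_X(\pi^{\ast}(D_E)))\ge 1$. For the reverse inequality I would argue that a global section of $\mathcal{O}_X(\pi^{\ast}(D_E))$ restricts to a holomorphic function on $X\setminus\pi^{-1}(D_E)$; pulling back to the universal cover $K^n\setminus\Gamma_K$ and using that $\Gamma_K\subset K^n$ has codimension $2$ with $K^n$ normal (indeed smooth), such a function extends to $K^n$ and hence descends to a holomorphic function on the compact normal variety $K^n/H$, which is constant. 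Therefore $\Gamma(X\setminus\pi^{-1}(D_E),\mathcal{O}_X(\pi^{\ast}(D_E)))\simeq H^0(K^n/H,\mathcal{O}_{K^n/H})\simeq\mathbb{C}$, and since $X$ is normal (smooth) and $\pi^{-1}(D_E)\supset\pi^{-1}(F_E)$ does not help directly—rather, one uses that the section, being bounded near $\pi^{-1}(D_E)$ by the effectivity, actually lies in $H^0(X,\mathcal{O}_X(\pi^{\ast}(D_E)))$ with the same one-dimensional space of global sections. Alternatively, and more cleanly, I would use $\pi_\ast\mathcal{O}_X(\pi^{\ast}(D_E))\simeq\mathcal{O}_{E^{[n]}}(D_E)\oplus\mathcal{O}_{E^{[n]}}(D_E+K_{E^{[n]}})$ by the projection formula together with $\pi_\ast\mathcal{O}_X\simeq\mathcal{O}_{E^{[n]}}\oplus\mathcal{O}_{E^{[n]}}(K_{E^{[n]}})$ from $[\ref{bio:3}]$, so that $\dim H^0(X,\mathcal{O}_X(\pi^{\ast}(D_E)))=\dim H^0(E^{[n]},\mathcal{O}_{E^{[n]}}(D_E))+\dim H^0(E^{[n]},\mathcal{O}_{E^{[n]}}(D_E+K_{E^{[n]}}))=1+\dim H^0(E^{[n]},\mathcal{O}_{E^{[n]}}(D_E+K_{E^{[n]}}))$, the first term by Proposition~\ref{dfn:80}.

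The main obstacle is then to show the second term vanishes, i.e.\ $H^0(E^{[n]},\mathcal{O}_{E^{[n]}}(D_E+K_{E^{[n]}}))=0$. For this I would repeat the normality argument: a section gives an element of $\Gamma(E^{[n]}\setminus D_E,\mathcal{O}(K_{E^{[n]}}))\simeq\Gamma(E^{(n)}\setminus\Delta_E^{(n)},\mathcal{O}(K_{E^{(n)}}))$ by the isomorphism $\pi_E|_{E^{[n]}\setminus D_E}$ and triviality of $\mathcal{O}(D_E)$ there; since $\Delta_E^{(n)}$ has codimension $2$ in the normal variety $E^{(n)}$, this extends to $H^0(E^{(n)},\mathcal{O}(K_{E^{(n)}}))$, which is $H^0(E^n,\omega_{E^n})^{\mathcal{S}_n}\hookrightarrow H^0(E^n,\omega_{E^n})$; but $\omega_{E^n}\simeq\boxtimes_{i=1}^n\omega_E$ with $H^0(E,\omega_E)=0$ since $K_E\not\sim 0$ is torsion, so by K\"unneth $H^0(E^n,\omega_{E^n})=0$. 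Hence $\dim_{\mathbb C}H^0(X,\mathcal{O}_X(\pi^{\ast}(D_E)))=1$. I expect the only delicate point is matching $\mathcal{O}_X(K_X)$-twisted data correctly through the degree-two cover and confirming $\varphi_X^{-1}(\Gamma_K/H)=\pi^{-1}(D_E)$ meshes with the divisor class $K_{E^{[n]}}$ being $2$-torsion and trivial away from $D_E$; the rest is the by-now-familiar Hartogs-type extension across codimension $2$.
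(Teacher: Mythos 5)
Your ``alternatively, and more cleanly'' route is precisely the paper's own proof: decompose $\pi_{\ast }{\mathcal O}_{X}(\pi ^{\ast }D_{E})\simeq {\mathcal O}_{E^{[n]}}(D_{E})\oplus {\mathcal O}_{E^{[n]}}(D_{E}+K_{E^{[n]}})$ via the projection formula, handle the first summand by Proposition~$\ref{dfn:80}$, and kill the second by restricting to $E^{(n)}\setminus \Delta _{E}^{(n)}$, extending across codimension $2$, and using K\"unneth with $H^{0}(E,\omega _{E})=0$. Your first, more direct sketch (sections of ${\mathcal O}_{X}(\pi ^{\ast }D_{E})$ restrict injectively to holomorphic functions on $X\setminus \pi ^{-1}(D_{E})$, which pull back to $K^{n}\setminus \Gamma _{K}$, extend by Hartogs, and are constant) in fact also closes, despite your hedging, but the argument you actually commit to is the same as the paper's.
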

\begin{proof}
Since $\pi $ is finite, we obtain ${\rm dim}_{{\mathbb C}}H^{0}(X,{\mathcal O}_{X}(\pi ^{\ast }(D_{E})))={\rm dim}_{{\mathbb C}}H^{0}(E^{[n]},\pi _{\ast }{\mathcal O}_{X}(\pi ^{\ast }(D_{E})))$.
From the projective formula and $X\simeq {\mathcal Spec}\,{\mathcal O}_{E^{[n]}}\oplus {\mathcal O}_{E^{[n]}}(K_{E^{[n]}})$, we have $\pi _{\ast }{\mathcal O}_{X}(\pi ^{\ast }(D_{E}))\simeq {\mathcal O}_{E^{[n]}}(D_{E})\oplus {\mathcal O}_{E^{[n]}}(D_{E}\otimes K_{E^{[n]}})$.
By Proposition $\ref{dfn:80}$, dim$_{{\mathbb C}}H^{0}(E^{[n]},{\mathcal O}_{E^{[n]}}(D_{E}))=1$.\\
We show that 
\[ {\rm dim}_{{\mathbb C}}H^{0}(E^{[n]},{\mathcal O}_{E^{[n]}}(D_{E}\otimes K_{E^{[n]}}))=0. \]
Since $\pi _{E}|_{E^{[n]}\setminus D_{E}}:E^{[n]}\setminus D_{E}\simeq E^{(n)}\setminus \Delta _{E}^{(n)}$, we have 
\[ (\pi _{E})_{\ast }({\mathcal O}_{E^{[n]}}(D_{E}\otimes K_{E^{[n]}}))\simeq \Omega ^{2n}_{E^{(n)}}\ {\rm on}\ E^{(n)}\setminus \Delta _{E}^{(n)}.\]
Hence we have 
\[ \Gamma (E^{[n]}\setminus D_{E},{\mathcal O}_{E^{[n]}}(D_{E}\otimes K_{E^{[n]}}))\simeq \Gamma (E^{(n)}\setminus \Delta _{E}^{(n)},\Omega ^{2n}_{E^{(n)}}). \]
Since $H^{2}(E,{\mathbb C})^{2,0}=0$, 
and from the K\"unneth Theorem,
\[ H^{2n}(E^{n},{\mathbb C})^{2n,0}=H^{0}(E^{n},\Omega ^{2n}_{E^{n}})=0. \]
Since the codimension of $\Delta _{E}^{n}$ is $2$, and $\Omega ^{2n}_{E^{n}}$ is a locally free sheaf, we have 
\[ \Gamma (E^{n}\setminus \Delta _{E}^{n},\Omega ^{2n}_{E^{n}})=H^{0}(E^{n},\Omega ^{2n}_{E^{n}}). \]
Thus we have 
\[ \Gamma (E^{(n)}\setminus \Delta _{E}^{(n)},\Omega ^{2n}_{E^{(n)}})=0, \]
and therefore
\[ {\rm dim}_{{\mathbb C}}H^{0}(E^{[n]}\setminus D_{E},{\mathcal O}_{E^{[n]}}(D_{E}\otimes K_{E^{[n]}}))=0. \]
Hence 
\[ {\rm dim}_{{\mathbb C}}H^{0}(E^{[n]},{\mathcal O}_{E^{[n]}}(D_{E}\otimes K_{E^{[n]}}))=0. \]
Thus we obtain ${\rm dim}_{{\mathbb C}}H^{0}(X,{\mathcal O}_{X}(\pi ^{\ast }(D_{E})))=1$.
\end{proof}
\begin{mar}\label{dfn:1002}
{\rm Then\ by} {\rm Proposition}\,$\ref{dfn:13}$, {\rm for\ an\ automorphism} $\varphi \in $ {\rm Aut}(X), {\rm the\ condition} $\varphi ^{\ast }({\mathcal O}_{X}(\pi ^{\ast }D_{E}))={\mathcal O}_{X}(\pi ^{\ast }D_{E})$ {\rm is\ equivalent\ to\ the condition} $\varphi (\pi ^{-1}(D_{E}))=\pi ^{-1}(D_{E})$.
\end{mar}
Let $\rho $ be the covering involution of $\pi :X\rightarrow E^{[n]}$. 
\begin{pro}\label{pro:3}
For $n\geq 3$, the induced map $\rho ^{\ast }:H^{2}(X,{\mathbb C})\rightarrow H^{2}(X,{\mathbb C})$ is identity.
\end{pro}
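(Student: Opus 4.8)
The plan is to identify the $\rho^{\ast}$-invariant part of $H^{2}(X,\mathbb{C})$ with the image of $\pi^{\ast}$ and then compare dimensions via Proposition~\ref{pro:70}.

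Since $E^{[n]}=X/\langle\rho\rangle$ and $\pi$ is the quotient map by the group $\langle\rho\rangle\cong\mathbb{Z}/2\mathbb{Z}$, the transfer (averaging) argument applies: $\pi_{\ast}\pi^{\ast}=\deg(\pi)\cdot\mathrm{id}=2\,\mathrm{id}$ shows that $\pi^{\ast}\colon H^{2}(E^{[n]},\mathbb{C})\to H^{2}(X,\mathbb{C})$ is injective, and since $\pi^{\ast}\pi_{\ast}=\mathrm{id}+\rho^{\ast}$ and $\rho^{\ast}\circ\pi^{\ast}=\pi^{\ast}$, the image of $\pi^{\ast}$ is exactly $H^{2}(X,\mathbb{C})^{\rho^{\ast}}$. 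Thus $\dim_{\mathbb{C}}H^{2}(X,\mathbb{C})^{\rho^{\ast}}=\dim_{\mathbb{C}}H^{2}(E^{[n]},\mathbb{C})$.

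I would then compute $\dim_{\mathbb{C}}H^{2}(E^{[n]},\mathbb{C})=11$ from the isomorphism of G\"ottsche and Soergel recalled in Section~3, applied with $i=2-2n$: a partition $\alpha\in P(n)$ contributes a summand in degree $2+2(|\alpha|-n)$, which is $\geq 0$ only for $|\alpha|\in\{n-1,n\}$, i.e.\ only for $\alpha=(n,0,\dots,0)$ and $\alpha=(n-2,1,0,\dots,0)$, giving the summands $H^{2}(E^{(n)},\mathbb{C})$ and $H^{0}(E^{(n-2)}\times E^{(2)},\mathbb{C})$. Since $H^{1}(E,\mathbb{C})=0$, the K\"unneth formula yields $H^{2}(E^{(n)},\mathbb{C})\cong H^{2}(E^{n},\mathbb{C})^{\mathcal{S}_{n}}\cong H^{2}(E,\mathbb{C})$, of dimension $10$, while the other summand is one-dimensional; hence $\dim_{\mathbb{C}}H^{2}(E^{[n]},\mathbb{C})=11$. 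Now Proposition~\ref{pro:70} gives $\dim_{\mathbb{C}}H^{2}(X,\mathbb{C})=11$ as well, so $H^{2}(X,\mathbb{C})^{\rho^{\ast}}=H^{2}(X,\mathbb{C})$, i.e.\ $\rho^{\ast}=\mathrm{id}$.

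The hypothesis $n\geq 3$ enters only through Proposition~\ref{pro:70}; for $n=2$ one has $\dim_{\mathbb{C}}H^{2}(X,\mathbb{C})=12$ by Theorem~\ref{dfn:91}, and the conclusion fails. The argument is essentially formal once the two dimensions are in hand, so there is no real obstacle; the one point deserving care is the dimension count for $H^{2}(E^{[n]},\mathbb{C})$. Alternatively one can avoid Proposition~\ref{pro:70} and argue directly: by Proposition~\ref{dfn:30}, $\rho$ corresponds on $X\setminus\pi^{-1}(F_{E})\cong\mathrm{Blow}_{\Delta_{K\ast\mu}\cup T_{\ast\mu}}K^{n}_{\ast\mu}/H$ to the automorphism induced by $\sigma_{1}$, and $\sigma_{1}$ fixes the diagonal classes $\sum_{i}\mathrm{pr}_{i}^{\ast}\alpha$ with $\sigma^{\ast}\alpha=\alpha$ (which span the $H$-invariant part of $H^{2}(K^{n},\mathbb{C})$, since $H^{1}(K,\mathbb{C})=0$) and merely permutes the exceptional classes $h_{ij},k_{ij}$, whose unique $H$-invariant combination $\sum_{i<j}(h_{ij}+k_{ij})$ is again $\sigma_{1}$-invariant.
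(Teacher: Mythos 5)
Your proof is correct and follows essentially the same route as the paper: identify $H^{2}(X,\mathbb{C})^{\rho^{\ast}}$ with $H^{2}(E^{[n]},\mathbb{C})$ and observe that both this invariant subspace and $H^{2}(X,\mathbb{C})$ have dimension $11$ (the latter by Proposition~\ref{pro:70}), forcing $\rho^{\ast}=\mathrm{id}$. The only cosmetic differences are that you make the transfer argument explicit and derive $\dim_{\mathbb{C}}H^{2}(E^{[n]},\mathbb{C})=11$ from the G\"ottsche--Soergel decomposition, whereas the paper simply cites Beauville for that dimension.
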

\begin{proof}
Since $E^{[n]}\simeq X/\langle \rho \rangle$ , we have $H^{2}(E^{[n]},{\mathbb C})\simeq H^{2}(X,{\mathbb C})^{\rho ^{\ast }}$.
By Proposition $\ref{pro:70}$, for $n\geq 3$, we have dim$_{\mathbb C}H^{2}(X,{\mathbb C})=11$. 
By $[\ref{bio:6},\,{\rm page}\,767]$, dim$_{\mathbb C}H^{2}(E^{[n]},{\mathbb C})=11$.
Thus the induced map $\rho ^{\ast }:H^{2}(X,{\mathbb C})\rightarrow H^{2}(X,{\mathbb C})$ is identity for $n\geq 3$. 
\end{proof}
Recall that $\mu :K\rightarrow E$ is the universal covering of $E$ where $K$ is a $K3$ surface, and $\sigma $ the covering involution of $\mu $.
\begin{pro}\label{dfn:130}
Let $E$ be an Enriques surface which does not have numerically trivial involutions, $E^{[n]}$ the Hilbert scheme of $n$ points of $E$, $\pi :X\rightarrow E^{[n]}$ the universal covering space of $E^{[n]}$, $\rho $ the covering involution of $\pi $, and $n\geq 3$. 
Let $\iota $ be an involution of $X$ which acts on $H^{2}(X,{\mathbb C})$ as ${\rm id}$, then $\iota =\rho $.
\end{pro}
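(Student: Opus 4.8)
The plan is to reduce the statement to a comparison of covering involutions downstairs. Suppose $\iota$ is an involution of $X$ acting trivially on $H^2(X,\mathbb C)$. First I would show that $\iota$ preserves the branch locus of $\pi$, i.e.\ $\iota(\pi^{-1}(D_E)) = \pi^{-1}(D_E)$. For this, note that $\mathcal O_X(\pi^\ast D_E) = \pi^\ast \mathcal O_{E^{[n]}}(D_E)$ is a line bundle whose class in $H^2(X,\mathbb C)$ is fixed by $\iota^\ast$ by hypothesis; combined with Proposition \ref{dfn:13}, which gives $\dim_{\mathbb C} H^0(X,\mathcal O_X(\pi^\ast D_E)) = 1$, and Remark \ref{dfn:1002}, this forces $\iota(\pi^{-1}(D_E)) = \pi^{-1}(D_E)$. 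Next, since $\rho$ also preserves $\pi^{-1}(D_E)$, the composition $\iota\circ\rho$ does too, and both $\iota\circ\rho$ and $\iota$ descend to automorphisms of the open part; via the resolution $\varphi_X\colon X\to K^n/H$ of Theorem \ref{thm:10} and the universal covering $\pi\circ\omega\colon K^n\setminus\Gamma_K \to E^{[n]}\setminus D_E$, any such automorphism lifts to an automorphism of $K^n\setminus\Gamma_K$, hence (by Oguiso's Proposition 4.1, exactly as in the proof of Theorem \ref{thm:2}) to an automorphism of $K^n$ of the form $s\circ(g_1\times\cdots\times g_n)$ with $s\in\mathcal S_n$ and $g_i\in\mathrm{Aut}(K)$.

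The second step is to pin down which such lifted automorphisms can act trivially on $H^2(X,\mathbb C)$. Via the isomorphism $H^2(X,\mathbb C)\cong H^2(K^n,\mathbb C)^H \oplus \big(\bigoplus_{i<j}\mathbb C h_{ij}\big)$ of the proof of Proposition \ref{pro:70} (noting that the $h_{ij}$ and $k_{ij}$ get identified under $H$), an automorphism of $X$ lifting $s\circ(g_1\times\cdots\times g_n)$ acts on the $H^2(K^n,\mathbb C)^H = (\,\text{Sym-invariant part of}\ \bigotimes)$ summand through the $g_i^\ast$ on $H^2(K,\mathbb C)$. Triviality of this action on $H^2(X,\mathbb C)$ therefore forces each $g_i^\ast$ to act on $H^2(K,\mathbb C)$ as $\pm\mathrm{id}$ on the appropriate $\sigma^\ast$-eigenspaces; arguing as in Proposition \ref{dfn:12}, we get $g_i = g_1$ or $g_1\circ\sigma$ and $g_1\sigma = \sigma g_1$, so $\iota$ is induced by a single $g_1\in\mathrm{Aut}(K)$ commuting with $\sigma$, hence descends to an automorphism $\bar g_1$ of $E$ with $\bar g_1^{[n]} \in \{\iota, \iota\circ\rho\}$ after adjusting by $\rho$. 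Now $\bar g_1^\ast$ acts trivially on $H^2(E^{[n]},\mathbb C) = H^2(X,\mathbb C)^{\rho^\ast} = H^2(X,\mathbb C)$ (using Proposition \ref{pro:3}), and since $H^2(E,\mathbb C)$ embeds $\mathrm{Aut}(E)$-equivariantly into $H^2(E^{[n]},\mathbb C)$, the induced involution $\bar g_1$ (or $\bar g_1^2 = \mathrm{id}$, so $\bar g_1$ itself an involution or the identity) is numerically trivial on $E$.

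The final step invokes the standing hypothesis: $E$ has no numerically trivial involution, so $\bar g_1 = \mathrm{id}_E$, whence $\bar g_1^{[n]} = \mathrm{id}_{E^{[n]}}$, and therefore the lift $\iota$ of $\bar g_1^{[n]}$ (respectively of $\mathrm{id}$) along the degree-$2$ cover $\pi$ is either $\mathrm{id}_X$ or $\rho$. Since $\iota$ is assumed to be an involution (and if $\iota = \mathrm{id}_X$ it acts trivially and is excluded, or one notes the statement really wants the nontrivial such involution), we conclude $\iota = \rho$. I expect the main obstacle to be the bookkeeping in the second step: carefully tracking how an automorphism of $X$ lifting $s\circ(g_1\times\cdots\times g_n)$ acts on the two summands of $H^2(X,\mathbb C)$ — in particular verifying that the permutation factor $s$ and the exceptional classes $h_{ij}$ contribute nothing new, and that triviality on cohomology genuinely forces $g_i\in\{g_1, g_1\sigma\}$ rather than merely that each $g_i^\ast = \pm\mathrm{id}$ on $H^2(K,\mathbb C)$ — together with the subtlety, when $n$ is such that $H^2_-(K,\mathbb C)\neq 0$, of transferring numerical triviality on $E^{[n]}$ back to numerical triviality on $E$.
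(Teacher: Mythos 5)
Your proposal follows essentially the same route as the paper's proof: restrict $\iota$ to the complement of $\pi^{-1}(D_E)$ via Proposition \ref{dfn:13} and Remark \ref{dfn:1002}, lift to an automorphism $g$ of $K^{n}\setminus \Gamma _{K}$, reduce to a product $g_{1}\times \cdots \times g_{n}$ with $g_{i}\in \{g_{1},g_{1}\circ \sigma \}$ as in Proposition \ref{dfn:12}, descend $g_{1}$ to an automorphism of $E$ acting trivially on $H^{2}(E,{\mathbb C})$ (using $\dim H^{2}(X,{\mathbb C})=\dim H^{2}(E^{[n]},{\mathbb C})=11$ for $n\geq 3$), and invoke the hypothesis that $E$ has no numerically trivial involution to get $\iota \in \{{\rm id}_{X},\rho \}$. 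Two small points you elide: the fact that $g_{i}\in \{g_{1},g_{1}\circ \sigma \}$ comes from the geometric argument that $g$ preserves $\Gamma _{K}$ (Proposition \ref{dfn:12}), not from the cohomology action, and the claim that the descended automorphism of $E$ has order dividing $2$ is obtained in the paper from $\iota ^{2}={\rm id}_{X}$ forcing $g^{2}\in H$ together with $[\ref{bio:9},\,{\rm Lemma}\,1.2]$.
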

\begin{proof}
Let $\iota $ be an involution of $X$ which acts on $H^{2}(X,{\mathbb C})$ as ${\rm id}$.
By Remark $\ref{dfn:1002}$, 
$\iota |_{X\setminus \pi ^{-1}(D_{E})}$ is automorphism of $X\setminus \pi ^{-1}(D_{E})$.
By the uniqueness of the universal covering space, there is an automorphism $g$ of $K^{n}\backslash \Gamma _{K}$ 
such that $\iota \circ \omega =\omega \circ g$:
$$
\xymatrix{
K^{n}\setminus \Gamma _{K} \ar[d]^{\omega } \ar[r]^{g} &K^{n}\setminus \Gamma _{K} \ar[d]^{\omega } \\
X\setminus \pi ^{-1}(D_{E}) \ar[r]^{\iota }&X\setminus \pi ^{-1}(D_{E}).
}
$$
Like the proof of Proposition $\ref{dfn:12}$, we can assume that there are some automorphisms $g_{i}$ of $K$ such that $g=g_{1}\times \cdots \times g_{n}$, for each $1\leq i\leq n$, $g_{i}=g_{1}$ or $g_{i}=g_{1}\circ \sigma $, and $g_{1}\circ \sigma =\sigma \circ g_{1}$.
Since $\iota ^{2}={\rm id}_{X}$, so we have $g^{2}\in H$. Thus we have $g^{2}={\rm id}_{K^{n}}$ or $\sigma _{i_{1}\ldots i_{k}}$. By $[\ref{bio:9},\,{\rm Lemma}\,1.2]$,  
we have $g^{2}={\rm id}_{K^{n}}$.
We put $g':=g_{1}$.
Let $g'_{E}$ be the induced automorphism of $E$ by $g'$, and $g'^{[n]}_{E}$ the induced automorphism of $E^{[n]}$ by $g'_{E}$.
Since $g'^{[n]}_{E}\circ \pi =\pi \circ \iota $ and $n\geq 3$,
$g'^{[n]\ast }_{E}$ acts on $H^{2}(E^{[n]},{\mathbb C})$ as ${\rm id}$, and therefore $g'^{\ast }_{E}$ acts on $H^{2}(E,{\mathbb C})$ as ${\rm id}$.
Since $E$ does not have numerically trivial involutions, $g'_{E}={\rm id}_{E}$, and therefore we have $g'=\sigma $ or $g'={\rm id}_{K}$.
Thus we have $\pi \circ \omega \circ g=\pi \circ \omega $:
$$
\xymatrix{
K^{n}\setminus \Gamma _{K} \ar[d]^{\pi \circ \omega } \ar[r]^{g} &K^{n}\setminus \Gamma _{K} \ar[d]^{\pi \circ \omega } \\
E^{[n]}\setminus D_{E} \ar[r]^{{\rm id}}&E^{[n]}\setminus D_{E}.
}
$$
Since $\iota \circ \omega =\omega \circ g$, we have we have $\pi =\pi \circ \iota $:
$$
\xymatrix{
X\setminus \pi ^{-1}(D_{E}) \ar[d]^{\pi } \ar[r]^{\iota } &X\setminus \pi ^{-1}(D_{E}) \ar[d]^{\pi } \\
E^{[n]}\setminus D_{E} \ar[r]^{{\rm id}}&E^{[n]}\setminus D_{E}.
}
$$
Since the degree of $\pi$ is $2$, we have $\iota =\rho $. 
\end{proof}
We suppose that $E$ has numerically trivial involutions.
By $[\ref{bio:9},\,{\rm Proposition}\,1.1]$, 
there is just one automorphism of $E$, denoted $\upsilon $, such that its order is $2$, and $\upsilon ^{\ast }$ acts on $H^{2}(E,{\mathbb C})$ as ${\rm id}$.
For $\upsilon $, there are just two involutions of $K$ which are liftings of $\upsilon $,
one acts on $H^{0}(K,\Omega ^{2}_{K})$ as ${\rm id}$, 
and another acts on $H^{0}(K,\Omega ^{2}_{K})$ as $-{\rm id}$, we denote by $\upsilon _{+}$ and $\upsilon _{-}$, respectively. 
Then they satisfies $\upsilon _{+}=\upsilon _{-}\circ \sigma $.
Let $\upsilon ^{[n]}$ be the automorphism of $E^{[n]}$ which is induced by $\upsilon $.
For $\upsilon ^{[n]}$, there are just two automorphisms of $X$ which are liftings of $\upsilon ^{[n]}$, denoted $\varsigma $ and $\varsigma '$, respectively:
$$
\xymatrix{
X \ar[d]^{\pi } \ar[r]^{\varsigma \,(\varsigma ')} &X \ar[d]^{\pi }\\
E^{[n]} \ar[r]^{\upsilon ^{[n]}} &E^{[n]}.
}
$$
Then they satisfies $\varsigma =\varsigma '\circ \sigma $.
Since $n\geq 3$ and like the proof of Proposition $\ref{dfn:130}$, each order of $\varsigma $ and $\varsigma '$ is $2$ .
\begin{lem}\label{dfn:100}
For $\varsigma $ and $\varsigma '$, one acts on $H^{0}(X,\Omega ^{2n}_{X})$ as ${\rm id}$, and another act on $H^{0}(X,\Omega ^{2n}_{X})$ as $-{\rm id}$.
\end{lem}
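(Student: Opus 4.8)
The plan is to determine how the covering involution $\rho $ of $\pi :X\rightarrow E^{[n]}$ acts on the one-dimensional space $H^{0}(X,\Omega ^{2n}_{X})$, and then to deduce the lemma from the fact that $\varsigma $ and $\varsigma '$ differ by $\rho $.

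First I would note that $H^{0}(X,\Omega ^{2n}_{X})=H^{0}(X,K_{X})\cong {\mathbb C}$, since $X$ is a Calabi--Yau manifold of dimension $2n$. To compute $\rho ^{\ast }$ on it, I would argue exactly as in the proof of Theorem $1.1$ and Proposition $\ref{dfn:13}$: from $X\simeq {\mathcal Spec}\,{\mathcal O}_{E^{[n]}}\oplus {\mathcal O}_{E^{[n]}}(K_{E^{[n]}})$ together with $\Omega ^{2n}_{X}\cong \pi ^{\ast }\Omega ^{2n}_{E^{[n]}}$, the projection formula gives
\[ H^{0}(X,\Omega ^{2n}_{X})\cong H^{0}(E^{[n]},\Omega ^{2n}_{E^{[n]}})\oplus H^{0}(E^{[n]},\Omega ^{2n}_{E^{[n]}}\otimes K_{E^{[n]}}), \]
and this splitting is precisely the decomposition of $H^{0}(X,\Omega ^{2n}_{X})$ into the $(+1)$- and $(-1)$-eigenspaces of $\rho ^{\ast }$: the first summand sits in the ${\mathcal O}_{E^{[n]}}$-part of $\pi _{\ast }{\mathcal O}_{X}$, on which $\rho $ acts trivially, and the second in the ${\mathcal O}_{E^{[n]}}(K_{E^{[n]}})$-part, on which $\rho $ acts by $-1$. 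Since $\Omega ^{2n}_{E^{[n]}}=K_{E^{[n]}}$ is a nontrivial $2$-torsion line bundle we have $H^{0}(E^{[n]},\Omega ^{2n}_{E^{[n]}})=0$, whereas $\Omega ^{2n}_{E^{[n]}}\otimes K_{E^{[n]}}\cong {\mathcal O}_{E^{[n]}}$ forces $H^{0}(E^{[n]},\Omega ^{2n}_{E^{[n]}}\otimes K_{E^{[n]}})\cong {\mathbb C}$. Hence the $(+1)$-eigenspace of $\rho ^{\ast }$ is zero, i.e. $\rho ^{\ast }$ acts on $H^{0}(X,\Omega ^{2n}_{X})$ as $-{\rm id}$.

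Next, since $\varsigma $ and $\varsigma '$ are the two liftings of $\upsilon ^{[n]}$ along the degree-two covering $\pi $, they differ by the covering involution $\rho $, say $\varsigma =\varsigma '\circ \rho $. Both $\varsigma ^{\ast }$ and $\varsigma '^{\ast }$ preserve the line $H^{0}(X,\Omega ^{2n}_{X})$, and since $\varsigma $ and $\varsigma '$ have order $2$ they act on it by scalars in $\{\pm 1\}$. From $\varsigma =\varsigma '\circ \rho $ we get $\varsigma ^{\ast }=\rho ^{\ast }\circ \varsigma '^{\ast }=-\varsigma '^{\ast }$ on $H^{0}(X,\Omega ^{2n}_{X})$, so exactly one of $\varsigma ^{\ast },\varsigma '^{\ast }$ is ${\rm id}$ and the other is $-{\rm id}$, which is the assertion.

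The only delicate point — more a bookkeeping matter than a real obstacle — is making sure the summand decomposition above really is the $\rho ^{\ast }$-eigenspace decomposition, i.e. that $\rho $ acts trivially on the structure-sheaf summand of $\pi _{\ast }{\mathcal O}_{X}$ and by $-1$ on the $K_{E^{[n]}}$-summand, together with the verification that the two lifts $\varsigma ,\varsigma '$ differ exactly by $\rho $. If one prefers to stay on the $K3$ side, an alternative is to pull a generator of $H^{0}(X,\Omega ^{2n}_{X})$ back along $\omega $ to the $H$-invariant form $\mathrm{pr}_{1}^{\ast }\omega _{K}\wedge \cdots \wedge \mathrm{pr}_{n}^{\ast }\omega _{K}$ on $K^{n}$, where $\omega _{K}$ spans $H^{0}(K,\Omega ^{2}_{K})$, and to compute the scalar by which a product lift $g_{1}\times \cdots \times g_{n}$ with $g_{i}\in \{\upsilon _{+},\upsilon _{-}\}$ scales it, using $\upsilon _{+}^{\ast }\omega _{K}=\omega _{K}$, $\upsilon _{-}^{\ast }\omega _{K}=-\omega _{K}$ and $\upsilon _{+}=\upsilon _{-}\circ \sigma $; this makes the two cases for $\varsigma $ and $\varsigma '$ fully explicit.
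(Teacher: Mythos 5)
Your argument is correct, but it is genuinely different from the paper's. The paper proves the lemma entirely on the $K^{n}$ side: it writes the lifts of $\upsilon ^{[n]}$ as products $u_{1}\times \cdots \times u_{n}$ with $u_{i}\in \{\upsilon _{+},\upsilon _{-}\}$, observes that since $\sigma _{ij}\in H$ only the parity of the number of $\upsilon _{+}$'s matters modulo $H$, identifies $\{\varsigma ,\varsigma '\}$ with $\{\widetilde{\upsilon _{+,\mathrm{odd}}},\widetilde{\upsilon _{+,\mathrm{even}}}\}$, and then computes the two signs on $\omega _{X}$ from $\upsilon _{\pm }^{\ast }\omega _{K}=\pm \omega _{K}$ --- essentially the ``alternative'' you sketch in your last paragraph. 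Your main route instead isolates the single fact that $\rho ^{\ast }=-\mathrm{id}$ on $H^{0}(X,\Omega ^{2n}_{X})$ (via the eigensheaf decomposition $\pi _{\ast }{\mathcal O}_{X}={\mathcal O}_{E^{[n]}}\oplus K_{E^{[n]}}$ and $H^{0}(E^{[n]},K_{E^{[n]}})=0$) and combines it with $\varsigma =\varsigma '\circ \rho $ and the order-$2$ property, which the paper establishes just before the lemma. Your version is shorter, avoids all the parity bookkeeping on $K^{n}$, and requires no case distinction on $n$; what the paper's computation buys in exchange is the explicit identification of which product lift is $\varsigma _{+}$ and which is $\varsigma _{-}$, and that explicit description is reused later (in the proof of Theorem 5.13, which cites ``the proof of Lemma 5.7''), so if you adopted your proof you would need to record that identification separately. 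The one point to state carefully in your write-up is the one you already flag: that the $\mathbb{Z}/2$-eigensheaf decomposition of $\pi _{\ast }{\mathcal O}_{X}$ has invariant part ${\mathcal O}_{E^{[n]}}$ and anti-invariant part $K_{E^{[n]}}$, which is exactly how the covering is constructed in Oguiso--Schr\"oer.
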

\begin{proof}
Since $\upsilon ^{[n]}|_{E^{[n]}\setminus D_{E}}$ is an automorphism of $E^{[n]}\setminus D_{E}$, and
from the uniqueness of the universal covering space,
there is an automorphism $g$ of $K^{n}\setminus \Gamma _{K}$ such that $\upsilon ^{[n]} \circ \pi \circ \omega =\pi \circ \omega \circ g$:
$$
\xymatrix{
K^{n}\setminus \Gamma _{K} \ar[d]^{\pi \circ \omega } \ar[r]^{g} &K^{n}\setminus \Gamma _{K} \ar[d]^{\pi \circ \omega } \\
E^{[n]}\setminus D_{E} \ar[r]^{\upsilon ^{[n]}} &E^{[n]}\setminus D_{E}.
}
$$
Like the proof of Proposition $\ref{dfn:12}$, we can assume that there are some automorphisms $g_{i}$ of $K$ such that $g=g_{1}\times \cdots\times g_{n}$ for each $1\leq i\leq n$, $g_{i}=g_{1}$ or $g_{i}=g_{1}\circ \sigma $, and $g_{1}\circ \sigma =\sigma \circ g_{1}$.
From Theorem $\ref{thm:10}$, we get $K^{n}\setminus \Gamma _{K}/H\simeq X\setminus \pi ^{-1}(D_{E}).$
Put 
\[ \upsilon _{+,even}:=u_{1}\times \cdots \times u_{n}  \]
where 
\[ u_{i}=\upsilon _{+}\ {\rm or}\ u_{i}=\upsilon _{-}\ {\rm and}\ {\rm the\ number\ of}\ i\ {\rm with}\ u_{i}=\upsilon _{+}\ {\rm is\ even} \]
which is an automorphism of $K^{n}$ and induces an automorphism $\widetilde{\upsilon _{+,even}}$ of $X\setminus \pi ^{-1}(D_{E}).$
We define automorphisms $\widetilde{\upsilon _{+,odd}}$, $\widetilde{\upsilon _{-,even}}$, and $\widetilde{ \upsilon _{-,odd}}$
of $K^{n}\setminus \Gamma _{K}/H$ in the same way.
Since $\sigma _{ij}\in H$ for $1\leq i<j\leq n$, and $\upsilon _{+}=\upsilon _{-}\circ \sigma $,
if $n$ is odd,
\[ \widetilde{\upsilon _{+,odd}}=\widetilde{\upsilon _{-,even}},\ \widetilde{\upsilon _{+,even}}=\widetilde{\upsilon _{-,odd}},\ {\rm and}\ \widetilde{\upsilon _{+,odd}}\not=\widetilde{\upsilon _{+,even}}, \]
and if $n$ is even, 
\[ \widetilde{\upsilon _{+,odd}}=\widetilde{\upsilon _{-,odd}},\ \widetilde{\upsilon _{+,even}}=\widetilde{\upsilon _{-,even}},\ {\rm and}\ \widetilde{\upsilon _{+,odd}}\not=\widetilde{\upsilon _{+,even}}. \]
Since $\upsilon ^{[n]}\circ \pi =\pi \circ \widetilde{\upsilon _{+,odd}}$ and $\upsilon ^{[n]}\circ \pi =\pi \circ \widetilde{\upsilon _{+,even}}$, and the degree of $\pi $ is $2$,
Thus we have $\{\varsigma ,\varsigma '\}=\{\widetilde{\upsilon _{+,odd}},\widetilde{\upsilon _{+,even}}\}$.
 
Let $\omega _{X}\in H^{0}(X,\Omega ^{2n}_{X})$ be a basis of $H^{0}(X,\Omega ^{2n}_{X})$ over ${\mathbb C}$. Since $X\setminus \pi ^{-1}(F_{E})\simeq {\rm Blow}_{\Delta _{K\ast \mu }\cup T_{\ast \mu }}K^{n}_{\ast \mu }/H$, and
by the definition of $\upsilon _{+}$ and $\upsilon _{-}$,
\[ \widetilde{\upsilon _{+,odd}}^{\ast }(\omega _{X})=-\omega _{X}\ {\rm and}\ \widetilde{\upsilon _{+,even}}^{\ast }(\omega _{X})=\omega _{X}. \]
Thus for $\{\varsigma ,\varsigma '\}$, one acts on $H^{0}(X,\Omega ^{2n}_{X})$ as ${\rm id}$, and another act on $H^{0}(X,\Omega ^{2n}_{X})$ as $-{\rm id}$.
\end{proof}
We put $\varsigma _{+}\in \{\varsigma ,\varsigma '\}$ as acts on $H^{0}(X,\Omega ^{2n}_{X})$ as ${\rm id}$ and $\varsigma _{-}\in \{\varsigma ,\varsigma '\}$ as acts on $H^{0}(X,\Omega ^{2n}_{X})$ as $-{\rm id}$.
\begin{pro}\label{dfn:50}
Suppose $E$ has numerically trivial involutions.
Let $E^{[n]}$ be the Hilbert scheme of $n$ points of $E$, $\pi :X\rightarrow E^{[n]}$ the universal covering space of $E^{[n]}$, $\rho $ the covering involution of $\pi $, and $n\geq 3$. 
Let $\iota $ be an involution of $X$ which $\iota ^{\ast }$ acts on $H^{2}(X,{\mathbb C})$ as ${\rm id}$ and on $H^{0}(X,\Omega ^{2n}_{X})$ as $-{\rm id}$, and $\iota \not=\rho $.
Then we have $\iota =\varsigma _{-}$.
\end{pro}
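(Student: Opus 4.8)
The plan is to run the same descent to $K^{n}$ as in the proof of Proposition~\ref{dfn:130}, but to keep track in addition of the action on the holomorphic $2n$-form, so as to single out $\varsigma_{-}$ among the finitely many surviving candidates. First, since $\iota^{\ast}$ is the identity on $H^{2}(X,{\mathbb C})$ and $X$ is simply connected---so that ${\rm Pic}(X)$ injects into the torsion-free group $H^{2}(X,{\mathbb Z})$---we get $\iota^{\ast}{\mathcal O}_{X}(\pi^{\ast}D_{E})={\mathcal O}_{X}(\pi^{\ast}D_{E})$, hence by Remark~\ref{dfn:1002} the involution $\iota$ restricts to an automorphism of $X\setminus\pi^{-1}(D_{E})$. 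By the uniqueness of the universal covering $\omega:K^{n}\setminus\Gamma_{K}\rightarrow X\setminus\pi^{-1}(D_{E})$ there is $g\in{\rm Aut}(K^{n}\setminus\Gamma_{K})$ with $\iota\circ\omega=\omega\circ g$; exactly as in Proposition~\ref{dfn:12} (extending $g$ to $K^{n}$ via Oguiso~[\ref{bio:4}, Proposition~4.1] and absorbing a permutation) we may assume $g=g_{1}\times\cdots\times g_{n}$ with $g_{i}\in\{g_{1},g_{1}\circ\sigma\}$ and $g_{1}\circ\sigma=\sigma\circ g_{1}$. Since $\iota^{2}={\rm id}_{X}$ we have $g^{2}\in H$, and by~[\ref{bio:9}, Lemma~1.2] $g^{2}={\rm id}_{K^{n}}$; thus $g':=g_{1}$ has order at most $2$ and descends to an automorphism $g'_{E}$ of $E$.

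Next I would check that $g$ descends, through the quotient by $G$, to $(g'_{E})^{[n]}|_{E^{[n]}\setminus D_{E}}$: one verifies that $g$ normalizes the whole group $G$ (conjugation by $g$ fixes each $\sigma_{i}$, and carries each $s\in{\mathcal S}_{n}$ to $\sigma_{I}\circ s$ with $|I|$ even), and that the induced map on $E^{(n)}\setminus\Delta_{E}^{(n)}$ sends $\{p_{i}\}$ to $\{\mu(g_{1}x_{i})\}$, where $p_{i}=\mu(x_{i})$. Hence $\pi\circ\iota=(g'_{E})^{[n]}\circ\pi$ on all of $X$. Since $n\geq 3$, Proposition~\ref{pro:3} together with ${\rm dim}_{\mathbb C}H^{2}(X,{\mathbb C})={\rm dim}_{\mathbb C}H^{2}(E^{[n]},{\mathbb C})=11$ shows that $\pi^{\ast}:H^{2}(E^{[n]},{\mathbb C})\rightarrow H^{2}(X,{\mathbb C})$ is an isomorphism, so $\iota^{\ast}={\rm id}$ forces $(g'_{E})^{[n]\ast}={\rm id}$ on $H^{2}(E^{[n]},{\mathbb C})$, and therefore $(g'_{E})^{\ast}={\rm id}$ on $H^{2}(E,{\mathbb C})$.

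Now the dichotomy. Since $E$ has numerically trivial involutions, [\ref{bio:9}, Proposition~1.1] forces $g'_{E}={\rm id}_{E}$ or $g'_{E}=\upsilon$, the unique numerically trivial involution of $E$. If $g'_{E}={\rm id}_{E}$, then $g'\in\{{\rm id}_{K},\sigma\}$, so $g$ is one of the $\sigma_{i_{1}\cdots i_{k}}\in G$ and $\iota|_{X\setminus\pi^{-1}(D_{E})}$ equals ${\rm id}$ when $g\in H$ and $\rho|_{X\setminus\pi^{-1}(D_{E})}$ when $g\notin H$; since $\iota$ and these automorphisms are morphisms of the compact manifold $X$ agreeing on a dense open set, $\iota\in\{{\rm id}_{X},\rho\}$, contradicting $\iota\neq\rho$ and, because ${\rm dim}_{\mathbb C}H^{0}(X,\Omega^{2n}_{X})=1$, the hypothesis that $\iota^{\ast}$ acts as $-{\rm id}$ on $H^{0}(X,\Omega^{2n}_{X})$. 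Hence $g'_{E}=\upsilon$, so $g_{1}\in\{\upsilon_{+},\upsilon_{-}\}$ and $g=u_{1}\times\cdots\times u_{n}$ with every $u_{i}\in\{\upsilon_{+},\upsilon_{-}\}$; therefore $\iota|_{X\setminus\pi^{-1}(D_{E})}$ is one of $\widetilde{\upsilon_{+,even}},\widetilde{\upsilon_{+,odd}}$, which by Lemma~\ref{dfn:100} coincides with the restriction of $\varsigma$ or of $\varsigma'$, so again by the density argument $\iota\in\{\varsigma,\varsigma'\}=\{\varsigma_{+},\varsigma_{-}\}$. Since $\iota^{\ast}=-{\rm id}$ on $H^{0}(X,\Omega^{2n}_{X})$ and $\varsigma_{-}$ is by definition the element of $\{\varsigma,\varsigma'\}$ with this property, we conclude $\iota=\varsigma_{-}$.

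The step I expect to be most delicate is the middle one: verifying that $g=g_{1}\times\cdots\times g_{n}$ with the stated $\sigma$-structure really normalizes the full group $G$, not merely $H$, and that the resulting automorphism of $E^{[n]}\setminus D_{E}$ is precisely $(g'_{E})^{[n]}$; one should also notice that discarding the case $g'_{E}={\rm id}_{E}$ genuinely uses \emph{both} hypotheses $\iota\neq\rho$ and ``$\iota^{\ast}=-{\rm id}$ on $\Omega^{2n}_{X}$''. Once this is settled, the remaining inputs---Propositions~\ref{pro:3} and~\ref{dfn:13}, Lemma~\ref{dfn:100}, and~[\ref{bio:9}, Proposition~1.1 and Lemma~1.2]---enter verbatim, and all cohomological computations are routine.
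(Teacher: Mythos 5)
Your argument is correct and follows essentially the same route as the paper's own proof: restrict $\iota$ to $X\setminus\pi^{-1}(D_{E})$ via Remark~\ref{dfn:1002}, lift to $g=g_{1}\times\cdots\times g_{n}$ on $K^{n}\setminus\Gamma_{K}$ as in Proposition~\ref{dfn:12}, force $g^{2}={\rm id}$ by $[\ref{bio:9},\,{\rm Lemma}\,1.2]$, rule out $g'_{E}={\rm id}_{E}$ using $\iota\neq\rho$, conclude $g'_{E}=\upsilon$ so $\iota\in\{\varsigma,\varsigma'\}$, and separate $\varsigma_{-}$ by the action on $H^{0}(X,\Omega^{2n}_{X})$. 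The extra care you take (verifying the descent to $(g'_{E})^{[n]}$ and noting that both hypotheses are needed to discard $g'_{E}={\rm id}_{E}$) only fills in details the paper leaves implicit.
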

\begin{proof}
Let $\iota $ be an involution of $X$ which acts on $H^{2}(X,{\mathbb C})$ as ${\rm id}$ and on $H^{0}(X,\Omega ^{2n}_{X})$ as $-{\rm id}$, and $\iota \not=\rho $.
By Remark $\ref{dfn:1002}$, 
$\iota |_{X\setminus \pi ^{-1}(D_{E})}$ is an automorphism of $X\setminus \pi ^{-1}(D_{E})$.
By the uniqueness of the universal covering space, there is an automorphism $g$ of $K^{n}\backslash \Gamma _{K}$ 
such that $\iota \circ \omega =\omega \circ g$:
$$
\xymatrix{
K^{n}\setminus \Gamma _{K} \ar[d]^{\omega } \ar[r]^{g} &K^{n}\setminus \Gamma _{K} \ar[d]^{\omega } \\
X\setminus \pi ^{-1}(D_{E}) \ar[r]^{\iota }&X\setminus \pi ^{-1}(D_{E}).
}
$$
Like the proof of Proposition $\ref{dfn:12}$, we can assume that there are some automorphisms $g_{i}$ of $K$ such that $g=g_{1}\times \cdots \times g_{n}$,  for each $1\leq i\leq n$, $g_{i}=g_{1}$ or $g_{i}=g_{1}\circ \sigma $, and $g_{1}\circ \sigma =\sigma \circ g_{1}$.
Since $\iota ^{2}={\rm id}_{X}$, so we have $g^{2}\in H$. Thus we have $g^{2}={\rm id}_{K^{n}}$ or $\sigma _{i_{1}\ldots i_{k}}$. By $[\ref{bio:9},\,{\rm Lemma}\,1.2]$,  
we have $g^{2}={\rm id}_{K^{n}}$.
We put $g':=g_{1}$.
Let $g'_{E}$ be the induced automorphism of $E$ by $g'$, and $g'^{[n]}_{E}$ the induced automorphism of $E^{[n]}$ by $g'_{E}$.
Since $g'^{[n]}_{E}\circ \pi =\pi \circ \iota $ and $n\geq 3$,
$g'^{[n]\ast }_{E}$ acts on $H^{2}(E^{[n]},{\mathbb C})$ as ${\rm id}$, and therefore $g'^{\ast }_{E}$ acts on $H^{2}(E,{\mathbb C})$ as $id$.
If $g'_{E}={\rm id}_{E}$, then we have $\iota =\rho $ or ${\rm id}_{X}$, a contradiction.
Since $g^{2}={\rm id}_{K^{n}}$
Thus the order of $g'_{E}$ is $2$. Since $g'^{\ast }_{E}$ acts on $H^{2}(E,{\mathbb C})$ as ${\rm id}$,
we have $g'_{E}=\upsilon $, and therefore $g'=\upsilon _{+}$ or $g'=\upsilon _{-}$. 
By the definition of $\varsigma $ and $\varsigma '$, we obtain $\iota =\varsigma $ or $\iota =\varsigma '$.
Since $\iota ^{\ast }$ acts on $H^{0}(X,\Omega ^{2n}_{X})$ as $-{\rm id}$, we obtain $\iota =\varsigma _{-}$.
\end{proof}
\begin{thm}\label{dfn:131}
Let $E$ be an Enriques surface, $E^{[n]}$ the Hilbert scheme of $n$ points of $E$, $\pi :X\rightarrow E^{[n]}$ the universal covering space of $E^{[n]}$, and $n\geq 3$.
If $X$ has a involution $\iota $ which $\iota ^{\ast }$ acts on $H^{2}(X,{\mathbb C})$ as ${\rm id}$, and $\iota \not=\rho $.
Then $E$ has a numerically trivial involution.
\end{thm}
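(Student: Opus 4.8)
The plan is to re-run the argument used in the proof of Proposition~\ref{dfn:130}, but now \emph{without} assuming that $E$ has no numerically trivial involution; the automorphism of $E$ that the argument produces will be exactly the desired numerically trivial involution. In fact Theorem~\ref{dfn:131} is, once the definitions are unwound, the contrapositive of Proposition~\ref{dfn:130}, so essentially nothing new has to be done — one only has to isolate the step in that proof where the hypothesis on $E$ was used.

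First I would note that, since $\iota ^{\ast }$ is the identity on $H^{2}(X,{\mathbb C})$, it fixes the class of ${\mathcal O}_{X}(\pi ^{\ast }D_{E})$, so by Remark~\ref{dfn:1002} the involution $\iota $ preserves $\pi ^{-1}(D_{E})$ and restricts to an automorphism of $X\setminus \pi ^{-1}(D_{E})$. Using that $\omega :K^{n}\setminus \Gamma _{K}\rightarrow X\setminus \pi ^{-1}(D_{E})$ is the universal covering space, I lift $\iota $ to an automorphism $g$ of $K^{n}\setminus \Gamma _{K}$; by Oguiso's result and the argument of Proposition~\ref{dfn:12}, after composing with a suitable permutation in ${\mathcal S}_{n}\subset G$ I may assume $g=g_{1}\times \cdots \times g_{n}$ with $g_{i}=g_{1}$ or $g_{i}=g_{1}\circ \sigma $ for each $i$, and $g_{1}\circ \sigma =\sigma \circ g_{1}$. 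Since $\iota ^{2}={\rm id}_{X}$ we get $g^{2}\in H$; as $g^{2}$ is again of product form, $[\ref{bio:9},\,{\rm Lemma}\,1.2]$ forces $g^{2}={\rm id}_{K^{n}}$, and in particular $g_{1}^{2}={\rm id}_{K}$.

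Next, put $g':=g_{1}$ and let $g'_{E}\in {\rm Aut}(E)$ be the automorphism it induces, with $g'^{[n]}_{E}\in {\rm Aut}(E^{[n]})$ the induced automorphism of $E^{[n]}$, so that $g'^{[n]}_{E}\circ \pi =\pi \circ \iota $. Since $n\geq 3$, Proposition~\ref{pro:3} gives that $\pi ^{\ast }$ identifies $H^{2}(E^{[n]},{\mathbb C})$ with $H^{2}(X,{\mathbb C})$, so from $\iota ^{\ast }={\rm id}$ we deduce that $g'^{[n]\ast }_{E}$ is the identity on $H^{2}(E^{[n]},{\mathbb C})$, hence $g'^{\ast }_{E}$ is the identity on $H^{2}(E,{\mathbb C})$ via the splitting $H^{2}(E^{[n]},{\mathbb C})\cong H^{2}(E,{\mathbb C})\oplus {\mathbb C}\delta $ of $[\ref{bio:6},\,{\rm page}\,767]$. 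It then remains to rule out $g'_{E}={\rm id}_{E}$: in that case $g'={\rm id}_{K}$ or $g'=\sigma $, so $g$ lies in $G$ and $\omega \circ g$ is either the identity or the covering involution of $X\setminus \pi ^{-1}(D_{E})\rightarrow E^{[n]}\setminus D_{E}$, whence $\iota ={\rm id}_{X}$ or $\iota =\rho $, contradicting that $\iota $ is an involution with $\iota \neq \rho $. Therefore $g'_{E}\neq {\rm id}_{E}$, and since $g'^{2}_{E}={\rm id}_{E}$ it has order exactly $2$ and acts trivially on $H^{2}(E,{\mathbb C})$: it is a numerically trivial involution of $E$.

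I do not expect a real obstacle here, since the bulk of the work is already contained in Propositions~\ref{dfn:12}, \ref{pro:3} and \ref{dfn:130}. The only point that requires a little care is the final dichotomy — verifying that $g'_{E}={\rm id}_{E}$ genuinely forces $\iota \in \{{\rm id}_{X},\rho \}$ — which relies on identifying the covering transformation group of $\omega $ with $H$ and that of $\pi \circ \omega $ with $G$, so that elements of $G$ act on $X\setminus \pi ^{-1}(D_{E})$ only through $G/H\cong \langle \rho \rangle $.
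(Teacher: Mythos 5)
Your proposal is correct and follows essentially the same route as the paper's own proof: lift $\iota$ through $\omega$ to a product automorphism $g=g_{1}\times\cdots\times g_{n}$ of $K^{n}\setminus\Gamma_{K}$, use $g^{2}\in H$ together with $[\ref{bio:9},\,{\rm Lemma}\,1.2]$ to get $g^{2}={\rm id}$, descend $g_{1}$ to an involution of $E$ acting trivially on $H^{2}(E,{\mathbb C})$, and rule out $g'_{E}={\rm id}_{E}$ by the argument of Proposition~\ref{dfn:130}. Your treatment of the final dichotomy (identifying the deck groups of $\omega$ and $\pi\circ\omega$ with $H$ and $G$) is in fact spelled out more carefully than in the paper, which simply refers back to Proposition~\ref{dfn:130}.
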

\begin{proof}
Let $\iota $ be an involution of $X$ which acts on $H^{2}(X,{\mathbb C})$ as ${\rm id}$, and $\iota \not=\rho $.
By {\rm Remark} $\ref{dfn:1002}$, 
$\iota |_{X\setminus \pi ^{-1}(D_{E})}$ is an automorphism of $X\setminus \pi ^{-1}(D_{E})$.
By the uniqueness of the universal covering space, there is an automorphism $g$ of $K^{n}\backslash \Gamma _{K}$ 
such that $\iota \circ \omega =\omega \circ g$:
$$
\xymatrix{
K^{n}\setminus \Gamma _{K} \ar[d]^{\omega } \ar[r]^{g} &K^{n}\setminus \Gamma _{K} \ar[d]^{\omega } \\
X\setminus \pi ^{-1}(D_{E}) \ar[r]^{\iota }&X\setminus \pi ^{-1}(D_{E}).
}
$$
Like the proof of Proposition $\ref{dfn:12}$, we can assume that there are some automorphisms $g_{i}$ of $K$ such that $g=g_{1}\times \cdots \times g_{n}$,  for each $1\leq i\leq n$, $g_{i}=g_{1}$ or $g_{i}=g_{1}\circ \sigma $, and $g_{1}\circ \sigma =\sigma \circ g_{1}$.
Since $\iota ^{2}={\rm id}_{X}$, we have $g^{2}\in H$. Thus we have $g^{2}={\rm id}_{K^{n}}$ or $\sigma _{i_{1}\ldots i_{k}}$. By $[\ref{bio:9},\,{\rm Lemma}\,1.2]$,  
we have $g^{2}={\rm id}_{K^{n}}$.
We put $g':=g_{1}$.
Let $g'_{E}$ be the induced automorphism of $E$ by $g'$, and $g'^{[n]}_{E}$ the induced automorphism of $E^{[n]}$ by $g'_{E}$.
Since $g'^{[n]}_{E}\circ \pi =\pi \circ \iota $ and $n\geq 3$,
$g'^{[n]\ast }_{E}$ acts on $H^{2}(E^{[n]},{\mathbb C})$ as ${\rm id}$, and therefore $g'^{\ast }_{E}$ acts on $H^{2}(E,{\mathbb C})$ as ${\rm id}$.
If $g'_{E}={\rm id}$, like the proof of Proposition $\ref{dfn:130}$ we have $\iota =\rho $ or $\iota ={\rm id}_{X}$, a contradiction.
Thus we have $g'_{E}\not={\rm id}$. Since $g^{2}={\rm id}_{K^{n}}$, $g'_{E}$ is an involution of $E$. 
Since $g'^{\ast }_{E}$ acts on $H^{2}(E,{\mathbb C})$ as ${\rm id}$,
$E$ has a numerically trivial involution.
\end{proof}
\begin{lem}\label{lem:1}
dim$_{\mathbb C}H^{2n-1,1}(K^{n}/H,{\mathbb C})=10.$
\end{lem}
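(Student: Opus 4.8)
The plan is to identify $H^{2n-1,1}(K^n/H,\mathbb{C})$ with the $H$-invariant part of the $(2n-1,1)$-Hodge component of $H^{2n}(K^n,\mathbb{C})$ and then carry out the invariant-theory computation in two stages, first for $\mathcal{S}_n$ and then for the involutions $\sigma_{ij}$, using the $\sigma^{\ast}$-eigenspace decomposition of $H^{2}(K,\mathbb{C})$. Since $H$ is a finite group acting on $K^n$ by automorphisms preserving the Hodge structure, one has $H^{2n-1,1}(K^n/H,\mathbb{C})=\bigl(H^{2n}(K^n,\mathbb{C})^{2n-1,1}\bigr)^{H}$, exactly as the paper already uses for $n=2$. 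By the K\"unneth theorem, $H^{2n}(K^n,\mathbb{C})^{2n-1,1}=\bigoplus H^{p_1,q_1}(K)\otimes\cdots\otimes H^{p_n,q_n}(K)$ over tuples with $\sum p_i=2n-1$ and $\sum q_i=1$. As $K$ is a $K3$ surface, the only nonzero $H^{p,q}(K)$ are those with $(p,q)\in\{(0,0),(2,0),(1,1),(0,2),(2,2)\}$; the condition $\sum q_i=1$ then forces exactly one tensor factor to be of type $(1,1)$ and every other factor to be of type $(p_i,0)$ with $p_i\in\{0,2\}$, whereupon $\sum p_i=2n-1$ forces each of the remaining $n-1$ factors to be of type $(2,0)$. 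Fixing a generator $\omega$ of $H^{2,0}(K)$ and a basis $\gamma_1,\dots,\gamma_{20}$ of $H^{1,1}(K)$, it follows that $H^{2n}(K^n,\mathbb{C})^{2n-1,1}$ has $\mathbb{C}$-basis $\{e_{i,k}\}_{1\le i\le n,\,1\le k\le 20}$, where $e_{i,k}$ carries $\gamma_k$ in the $i$-th tensor slot and $\omega$ in all the others; in particular it is $20n$-dimensional.

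Next I would take invariants. All of the $e_{i,k}$ lie in even-degree cohomology, so $\mathcal{S}_n$ acts on them by the naive permutation of tensor slots with no Koszul signs; hence the $\mathcal{S}_n$-invariant subspace is the $20$-dimensional span of the classes $f_k:=\sum_{i=1}^{n}e_{i,k}$, $k=1,\dots,20$. Since every $\sigma_{ij}$ is an $\mathcal{S}_n$-conjugate of $\sigma_{12}$, an $\mathcal{S}_n$-invariant class is $H$-invariant if and only if it is fixed by $\sigma_{12}$, so only that single condition remains. Choosing $\gamma_1,\dots,\gamma_{20}$ adapted to the decomposition $H^{1,1}(K)=H^{1,1}_{+}(K)\oplus H^{1,1}_{-}(K)$ into $\sigma^{\ast}$-eigenspaces (each of dimension $10$), and recalling $\sigma^{\ast}\omega=-\omega$, a direct computation shows $\sigma_{12}^{\ast}e_{i,k}=e_{i,k}$ for $i\ge 3$, while for $i\in\{1,2\}$ one gets $\sigma_{12}^{\ast}e_{i,k}=e_{i,k}$ when $\gamma_k\in H^{1,1}_{-}(K)$ and $\sigma_{12}^{\ast}e_{i,k}=-e_{i,k}$ when $\gamma_k\in H^{1,1}_{+}(K)$. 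Therefore $f_k$ is $\sigma_{12}$-invariant precisely when $\gamma_k\in H^{1,1}_{-}(K)$, and since $\dim_{\mathbb{C}}H^{1,1}_{-}(K)=10$ this gives $\dim_{\mathbb{C}}H^{2n-1,1}(K^n/H,\mathbb{C})=10$, reproducing for general $n$ the $n=2$ description of equation~(\ref{eq:30}).

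The only genuinely delicate point is the sign bookkeeping in the last step: on the mixed terms $e_{1,k}$ and $e_{2,k}$ the involution $\sigma_{12}$ acts on two slots, one carrying $\gamma_k$ and one carrying $\omega$, so the factor $-1$ from $\sigma^{\ast}\omega=-\omega$ combines with the eigenvalue $\pm 1$ of $\gamma_k$, and it is the \emph{product} of the two that must equal $+1$ for invariance; this is exactly what singles out the anti-invariant part $H^{1,1}_{-}(K)$ rather than the invariant part, and it must be checked carefully. One should also verify at the outset that no Koszul signs intervene in the $\mathcal{S}_n$-action, which holds because every class in play sits in the even-degree group $H^{2}(K,\mathbb{C})$; granted that, the reduction $H^{2n-1,1}(K^n/H,\mathbb{C})=\bigl(H^{2n}(K^n,\mathbb{C})^{2n-1,1}\bigr)^{H}$ and the K\"unneth decomposition are routine, and the whole argument is essentially the $n=2$ computation of the paper carried out in general.
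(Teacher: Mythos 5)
Your proof is correct and follows essentially the same route as the paper: the K\"unneth decomposition forces one $(1,1)$ factor and $n-1$ copies of the $(2,0)$ class, and then the $H$-invariants are computed via the $\sigma^{\ast}$-eigenspace decomposition of $H^{1,1}(K)$, singling out the $10$-dimensional anti-invariant part because $\sigma^{\ast}\omega=-\omega$. Your write-up is in fact somewhat more careful than the paper's, which merely exhibits the ten symmetrized classes $\gamma_i$ and asserts the isomorphism~(\ref{eq:10}) without spelling out the sign check that rules out $H^{1,1}_{+}(K)$.
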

\begin{proof}
Let $\sigma $ be the covering involution of $\mu :K\rightarrow E$. Put
\[ H^{k}_{\pm }(K,{\mathbb C})^{p,q}:=\{ \alpha \in H^{k}(K,{\mathbb C})^{p,q}:\sigma ^{\ast }(\alpha )=\pm \alpha  \}\,{\rm and}  \] 
\[ h^{p,q}_{\pm }(K):={\rm dim}_{\mathbb C}H^{k}_{\pm }(K,{\mathbb C})^{p,q}.  \]
Since $K$ is a $K3$ surface, we have 
\[ h^{0,0}(K)=1,\ h^{1,0}(K)=0,\ h^{2,0}(K)=1,\ {\rm and}\ h^{1,1}(K)=20,\,{\rm and} \]
\[ h_{+}^{0,0}(K)=1,\,h_{+}^{1,0}(K)=0,\,h_{+}^{2,0}(K)=0,\,{\rm and}\,h_{+}^{1,1}(K)=10,\,{\rm and} \]
\[ h^{0,0}_{-}(K)=0,\,h^{1,0}_{-}(K)=0,\,h^{2,0}_{-}(K)=1,\,{\rm and}\,h^{2,0}_{-}(K)=10. \]
Let $\Lambda $ be a subset of ${\mathbb Z}_{\geq 0}^{2n}$
\[ \Lambda :=\{ (s_{1},\cdots ,s_{n},t_{1},\cdots ,t_{n})\in {\mathbb Z}_{\geq 0}^{2n}:\Sigma _{i=1}^{n}s_{i}=2n-1,\,\Sigma _{j=1}^{n}t_{j}=1 \}. \] 
From the K\"unneth Theorem, we have
$$
H^{2n}(k^{n},{\mathbb C})^{2n-1,1}\simeq \bigoplus _{(s_{1},\cdots ,s_{n},t_{1},\cdots ,t_{n})\in \Lambda }\biggl{(}\bigotimes _{i=1}^{n}H^{2}(K,{\mathbb C})^{s_{i},t_{i}}\biggl{)}.
$$ 
We fix a basis $\alpha $ of $H^{2}(K,{\mathbb C})^{2,0}$ and a basis $\{\beta _{i}\}_{i=1}^{10}$ of $H^{2}_{-}(K,{\mathbb C})^{1,1}$, and let
\[ \tilde{\beta _{i}}:=\bigotimes _{j=1}^{n}\epsilon _{j}  \]
where $\epsilon _{j}=\alpha $ for $j\not=i$ and $\epsilon _{j}=\beta _{i}$ for $j=i$, and 
\[ \gamma _{i}:=\bigoplus _{j=1}^{n}\tilde{\beta _{j}}. \] 
then we have
\begin{equation}
\label{eq:10}
H^{2n}(K^{n}/H,{\mathbb C})^{2n-1,1}\simeq \bigoplus _{i=1}^{10}{\mathbb C}\gamma _{i},
\end{equation}
dim$_{\mathbb C}H^{2n-1,1}(K^{n}/H,{\mathbb C})=10$.
\end{proof}
Since $X$ and $K^{n}/H$ are projective, $K^{n}/H$ is a V-manifold, and $\pi $ is a surjective, $\pi ^{\ast }:H^{p,q}(K^{n}/H,{\mathbb C})\rightarrow H^{p,q}(X,{\mathbb C})$ is injective.
\begin{thm}\label{thm:91}
We suppose $n\geq 2$. Let $\pi :X\rightarrow E^{[n]}$ be the universal covering space.
For any automorphism $f$ of $X$, if $f^{\ast }$ is acts on $H^{\ast }(X,{\mathbb C}):=\bigoplus _{i=0}^{2n}H^{i}(X,{\mathbb C})$ as identity, then $f={\rm id}_{X}$.
\end{thm}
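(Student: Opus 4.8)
The plan is to reduce the assertion, via the geometry of Theorem~\ref{thm:10}, to a statement about the $K3$ surface $K$, and to settle that using the K\"unneth description of $H^*(K^n/H,\mathbb C)$ together with the Torelli theorem for $K3$ surfaces.

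First I would note that $X$ is Calabi--Yau, so $H^1(X,\mathcal O_X)=0$ and the first Chern class embeds $\operatorname{Pic}(X)\hookrightarrow H^2(X,\mathbb Z)$. Hence $f^*=\mathrm{id}$ on $H^2(X,\mathbb C)$ forces $f^*=\mathrm{id}$ on $\operatorname{Pic}(X)$, in particular $f^*\mathcal O_X(\pi^*D_E)=\mathcal O_X(\pi^*D_E)$, so by Remark~\ref{dfn:1002} we get $f(\pi^{-1}(D_E))=\pi^{-1}(D_E)$. Thus $f$ restricts to an automorphism of $X\setminus\pi^{-1}(D_E)\simeq (K^n\setminus\Gamma_K)/H$. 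Then, exactly as in the proof of Proposition~\ref{dfn:12} (uniqueness of the universal covering, Oguiso's result on birational automorphisms of $K^n$, and absorbing the ${\mathcal S}_n$-part via ${\mathcal S}_n\subset G$), there is $g=g_1\times\cdots\times g_n\in\operatorname{Aut}(K^n)$ with $\omega\circ g=f\circ\omega$, where $g_i\in\{g_1,\,g_1\circ\sigma\}$ for each $i$ and $g_1\circ\sigma=\sigma\circ g_1$. This $g$ descends to $\bar g\in\operatorname{Aut}(K^n/H)$ with $\varphi_X\circ f=\bar g\circ\varphi_X$ for the resolution $\varphi_X$ of Theorem~\ref{thm:10}, and since $\varphi_X^*\colon H^*(K^n/H,\mathbb C)\hookrightarrow H^*(X,\mathbb C)$ is injective, the hypothesis $f^*=\mathrm{id}$ gives $\bar g^*=\mathrm{id}$ on $H^*(K^n/H,\mathbb C)$.

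Next I would unwind $\bar g^*=\mathrm{id}$ using K\"unneth, $H^1(K)=0$, and the fact that $g_1^*$ commutes with $\sigma^*$ (hence preserves $H^{2,0}(K)$ and $H^{1,1}_\pm(K)$). Write $g_1^*\alpha=\lambda\alpha$ for a generator $\alpha$ of $H^{2,0}(K)$, set $N:=g_1^*|_{H^{1,1}_+(K)}$, $M:=g_1^*|_{H^{1,1}_-(K)}$, and $\epsilon:=(-1)^{m}$ with $m=\#\{i:g_i=g_1\circ\sigma\}$. On the symmetrization of $H^{1,1}_+(K)$ inside $H^2(K^n/H,\mathbb C)$ one reads off $N=\mathrm{id}$; on $H^{2n,0}(K^n/H,\mathbb C)=\mathbb C\,(\mathrm{pr}_1^*\alpha\wedge\cdots\wedge\mathrm{pr}_n^*\alpha)$ one gets $\epsilon\lambda^n=1$; and on $H^{2n-1,1}(K^n/H,\mathbb C)=\bigoplus_{i=1}^{10}\mathbb C\gamma_i$ from Lemma~\ref{lem:1}, computing $g^*\gamma_i$ gives $\epsilon\lambda^{n-1}M=\mathrm{id}$, whence $M=\lambda\cdot\mathrm{id}$ by the previous relation. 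Consequently $g_1^*$ on $H^2(K,\mathbb C)$ has eigenvalue $1$ with multiplicity $10$ (on $H^{1,1}_+$), eigenvalue $\lambda$ with multiplicity $11$ (on $H^{2,0}\oplus H^{1,1}_-$), and eigenvalue $\bar\lambda$ once (on $H^{0,2}$). Since $g_1^*$ preserves the lattice $L_-:=H^2(K,\mathbb Z)^{\sigma^*=-1}$, the trace $11\lambda+\bar\lambda=\operatorname{tr}(g_1^*|_{L_-})$ is an integer, and $|\lambda|=1$ because $g_1^*$ preserves $\int_K\alpha\wedge\bar\alpha>0$; therefore $\operatorname{Im}(11\lambda+\bar\lambda)=10\operatorname{Im}\lambda=0$, so $\lambda=\pm1$.

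Finally, if $\lambda=1$ then $g_1^*=\mathrm{id}$ on all of $H^2(K,\mathbb C)$, hence on $H^2(K,\mathbb Z)$, so $g_1=\mathrm{id}_K$ by the Torelli theorem for $K3$ surfaces; if $\lambda=-1$ then $g_1^*$ agrees with $\sigma^*$ on $H^2(K,\mathbb Q)$, so $(g_1\circ\sigma)^*=\mathrm{id}$ on $H^2(K,\mathbb Z)$ and $g_1=\sigma$. In either case every $g_i\in\{\mathrm{id}_K,\sigma\}$, so $g=\sigma_{i_1\ldots i_k}\in G$. Because $\omega$ is the universal covering of $X\setminus\pi^{-1}(D_E)$ with deck group $H$, while $X\setminus\pi^{-1}(D_E)\to E^{[n]}\setminus D_E$ has deck group $G/H=\langle\bar\rho\rangle$, the relation $\omega\circ g=f\circ\omega$ forces $f=\mathrm{id}_X$ when $g\in H$ and $f=\rho$ when $g\notin H$. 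The latter is impossible under the hypothesis: if $\rho^*$ were the identity on $H^{2n}(X,\mathbb C)$, the holomorphic $2n$-form on $X$ would be $\rho$-invariant and hence descend to a nowhere-vanishing section of $K_{E^{[n]}}$, contradicting that $K_{E^{[n]}}$ is a non-trivial $2$-torsion line bundle. Hence $f=\mathrm{id}_X$. The main obstacle is the third paragraph — the K\"unneth bookkeeping showing $M$ is a scalar, followed by the integrality-plus-Hodge--Riemann argument forcing $\lambda=\pm1$ — and this is precisely where the $K3$ Torelli theorem enters: the $(2n-1,1)$-classes $\gamma_i$ detect the action of $g_1^*$ on $H^{1,1}_-(K)$ and thereby separate $\{\mathrm{id}_K,\sigma\}$ from the other numerically-trivial-type lifts $\upsilon_\pm$.
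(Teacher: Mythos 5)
Your proof is correct, and while it follows the paper's skeleton at the start and the end, the crucial middle step is genuinely different. Both arguments begin the same way: Remark \ref{dfn:1002} lets $f$ restrict to $X\setminus\pi^{-1}(D_E)$, and the lifting argument of Proposition \ref{dfn:12} produces $g=g_1\times\cdots\times g_n$ with $g_i\in\{g_1,g_1\circ\sigma\}$ and $g_1\circ\sigma=\sigma\circ g_1$; both also finish by showing $f\in\{\mathrm{id}_X,\rho\}$ and ruling out $\rho$ (the paper via $H^{2n}(E^{[n]},{\mathbb C})^{2n-1,1}=0$ from Proposition \ref{dfn:9}, you via the slightly more elementary observation that $\rho^*$ must act as $-\mathrm{id}$ on $H^{2n,0}(X)$ because $K_{E^{[n]}}$ is nontrivial). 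Where you diverge is in proving $g_1\in\{\mathrm{id}_K,\sigma\}$. The paper assumes $g_1\notin\{\mathrm{id}_K,\sigma\}$, invokes Mukai--Namikawa's classification of numerically trivial automorphisms of Enriques surfaces to bound the order of $g_{1E}$ by $4$ and to extract an eigenvector in $H^{1,1}_-(K)$ with eigenvalue $\pm1$ or $\pm\sqrt{-1}$ different from what invariance of the classes $\gamma_i$ of Lemma \ref{lem:1} permits. You instead make no appeal to that classification: the K\"unneth bookkeeping on $H^2$, $H^{2n,0}$ and $H^{2n-1,1}$ of $K^n/H$ pins down $g_1^*$ completely as $\mathrm{id}$ on $H^2_+(K)$ and a single scalar $\lambda$ on $H^{2,0}\oplus H^{1,1}_-$, and then the integrality of $\operatorname{tr}(g_1^*|_{L_-})=11\lambda+\bar\lambda$ together with $|\lambda|=1$ forces $\lambda=\pm1$, after which the global Torelli theorem for $K3$ surfaces identifies $g_1$ with $\mathrm{id}_K$ or $\sigma$. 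The trade-off is clear: the paper leans on a deep Enriques-side classification but stays away from Torelli, whereas you import $K3$ Torelli and in exchange get a self-contained, quantitative argument that actually proves more (that $g_1^*$ is scalar on $H^{1,1}_-(K)$, not merely that it has a bad eigenvector). Both are complete proofs; yours is arguably more robust since it does not depend on the precise eigenvalue lists of Mukai--Namikawa.
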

\begin{proof}
%\wr \amalg We have shown for $n=2$. Suppose $n\geq 3$.
%Since $n\geq 3$, for $f$, there is an automorphism $g$ of $E$ such that $f$ is a lifting of $g^{[n]}$.    
Since $f^{\ast }$ acts on $H^{2}(X,{\mathbb C})$ as identity, $f$ is an automorphism of $K^{n}\setminus \Gamma _{K}/H$. 
Let $p_{H}:K^{2}\setminus \Gamma _{K}\rightarrow K^{2}\setminus \Gamma _{K}/H$ be the natural map.
Then the uniqueness of the universal covering space, we can that there are some automorphisms $g_{i}$ of $K$ such that $g:=g_{1}\times \cdots \times g_{n}$, $g_{i}=g_{1}$ or $g_{i}=g_{1}\circ \sigma $, $g_{1}\circ \sigma =\sigma \circ g_{1}$ for $2\leq i\leq n$, and $f\circ p_{H} =p_{H}\circ g$:
$$
\xymatrix{
K^{n}\setminus \Gamma _{K}/H \ar[r]^{f}&K^{n}\setminus \Gamma _{K}/H \\
K^{n}\setminus \Gamma _{K} \ar[u]^{p_{H}} \ar[r]^{g} &K^{n}\setminus \Gamma _{K} \ar[u]^{p_{H}}. 
}
$$
Let $g_{H}$ be the induced automorphism of $K^{n}/H$.
Then we obtain $g_{H}\circ \varphi _{X}=\varphi _{X}\circ f$:
$$
\xymatrix{
K^{n}/H \ar[r]^{g_{H}}&K^{n}/H \\
X \ar[u]^{\varphi _{X}} \ar[r]^{f} &X \ar[u]^{\varphi _{X}}.
}
$$
Put $g_{1E}$ the automorphism of $E$ induced by $g_{1}$. 
Since $f^{\ast }$ acts on $H^{2}(X,{\mathbb C})$ as identity, $g_{H}^{\ast }$ acts on $H^{2}(K^{n}/H,{\mathbb C})$ as identity. 
Since $H^{2}(K^{n}/H,{\mathbb C})\cong H^{2}(E,{\mathbb C})$, $g_{1E}^{\ast }$ acts on $H^{2}(E,{\mathbb C})$ as identity. 
From Lemma $\ref{lem:1}$, we have
\[ H^{2n}(X,{\mathbb C})^{2n-1,1}=\bigoplus _{i=1}^{10}{\mathbb C}\varphi ^{\ast }_{X}\gamma _{i}. \]
Suppose $g_{1}\not=\sigma $ and $g_{1}\not={\rm id}_{K}$. Since $g_{1E}^{\ast }$ acts on $H^{2}(E,{\mathbb C})$ as identity,
from $[\ref{bio:9},\,{\rm page}\,386$-$389]$, the order of $g_{1E}$ is at most $4$.
If the order of $g_{1E}$ is $2$, there is an element $\alpha _{\pm }\in H^{2}_{-}(K,{\mathbb C})^{1,1}$ such that $g_{1}^{\ast }(\alpha _{\pm })=\pm \alpha $.
By the equation $(\ref{eq:10})$ and the proof of Lemma $\ref{dfn:100}$, $f$ does not act on $H^{2n}(X,{\mathbb C})^{2n-1,1}$ as identity, it is a contradiction.
If the order of $g_{1E}$ is $4$, then 
there is an element $\alpha '_{\pm }\in H^{2}_{-}(K,{\mathbb C})^{1,1}$ such that $g_{1}^{\ast }(\alpha '_{\pm })=\pm \sqrt{-1}\alpha '_{\pm }$ from $[\ref{bio:9},\,{\rm page}\,390$-$391]$.
By the equation $(\ref{eq:10})$ and and the proof of Lemma $\ref{dfn:100}$, $f$ does not act on $H^{2n}(X,{\mathbb C})^{2n-1,1}$ as identity, it is a contradiction.
Thus we have $g_{1E}={\rm id}_{E}$, i.e. $g_{1}=\sigma $ or $g_{1}={\rm id}_{K}$, and $f={\rm id}_{X}$ or $f=\rho $ where $\rho $ is the covering involution of $\pi :X\rightarrow E^{n}$.
From Proposition $\ref{dfn:9}$ $H^{2n}(E^{[n]},{\mathbb C})^{2n-1,1}\simeq 0$, $\rho $ does not act on $H^{2n}(X,{\mathbb C})^{2n-1,1}$ as identity.
Since $f^{\ast }$ acts on $H^{2n}(X,{\mathbb C})^{2n-1,1}$ as identity, we have $f={\rm id}_{X}$.
\end{proof}
\begin{cro}\label{cro:92}
We suppose $n\geq 2$. Let $\pi :X\rightarrow E^{[2]}$ be the universal covering space.
For any two automorphisms $f$ and $g$ of $X$, if $f^{\ast }=g^{\ast }$ on $H^{\ast }(X,{\mathbb C})$, then $f=g$.
\end{cro}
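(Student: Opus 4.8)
The plan is to reduce this immediately to Theorem~\ref{thm:91} by the standard group-theoretic trick. Given two automorphisms $f,g$ of $X$ with $f^{\ast}=g^{\ast}$ on $H^{\ast}(X,{\mathbb C})$, I would set $h:=g^{-1}\circ f\in{\rm Aut}(X)$ and compute its action on cohomology. By contravariant functoriality of pullback, $h^{\ast}=(g^{-1}\circ f)^{\ast}=f^{\ast}\circ(g^{-1})^{\ast}$, and since $(g^{-1})^{\ast}=(g^{\ast})^{-1}$, combined with the hypothesis $f^{\ast}=g^{\ast}$, this gives $h^{\ast}=g^{\ast}\circ(g^{\ast})^{-1}={\rm id}$ on each $H^{i}(X,{\mathbb C})$, hence on $H^{\ast}(X,{\mathbb C})=\bigoplus_{i=0}^{2n}H^{i}(X,{\mathbb C})$.

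Then Theorem~\ref{thm:91} applies verbatim to $h$: an automorphism of $X$ whose induced action on $H^{\ast}(X,{\mathbb C})$ is the identity must equal ${\rm id}_{X}$. Therefore $g^{-1}\circ f={\rm id}_{X}$, i.e.\ $f=g$, which is the claim.

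There is essentially no obstacle at this stage; the only points requiring care are that the pullback is contravariant (so the composition order in $h^{\ast}$ is as above) and that it is a graded homomorphism, so that acting as the identity in each degree is equivalent to acting as the identity on the total cohomology ring — both are routine. All the substantive work already resides in Theorem~\ref{thm:91}, where the covering involution $\rho$ and the possible automorphisms $g_{1}$ of $K$ were ruled out through their action on $H^{2n}(X,{\mathbb C})^{2n-1,1}$ together with Proposition~\ref{dfn:9}.
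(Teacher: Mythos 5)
Your reduction of the corollary to Theorem~5.12 via $h:=g^{-1}\circ f$ is correct and is exactly the intended argument; the paper itself states the corollary without proof precisely because this is the routine deduction. Nothing further is needed.
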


By $[\ref{bio:9},\,{\rm Proposition}\,1.1]$, 
there is just one automorphism of $E$, denoted $\upsilon $, such that its order is $2$, and $\upsilon ^{\ast }$ acts on $H^{2}(E,{\mathbb C})$ as ${\rm id}$.
For $\upsilon $, there are just two involutions of $K$ which are liftings of $\upsilon $,
one acts on $H^{0}(K,\Omega ^{2}_{K})$ as ${\rm id}$, 
and another acts on $H^{0}(K,\Omega ^{2}_{K})$ as $-{\rm id}$, we denote by $\upsilon _{+}$ and $\upsilon _{-}$, respectively. 
Then they satisfies $\upsilon _{+}=\upsilon _{-}\circ \sigma $.
Let $\upsilon ^{[n]}$ be the automorphism of $E^{[n]}$ which is induced by $\upsilon $.
For $\upsilon ^{[n]}$, there are just two automorphisms of $X$ which are liftings of $\upsilon ^{[n]}$, denoted $\varsigma $ and $\varsigma '$, respectively.
Then they satisfies $\varsigma =\varsigma '\circ \sigma $, and each order of $\varsigma $ and $\varsigma '$ is $2$.
From ${\rm Lemma}5.11$, one acts on $H^{0}(X,\Omega ^{2n}_{X})$ as ${\rm id}$, and another act on $H^{0}(X,\Omega ^{2n}_{X})$ as $-{\rm id}$.
We put $\varsigma _{+}\in \{\varsigma ,\varsigma '\}$ as acts on $H^{0}(X,\Omega ^{2n}_{X})$ as ${\rm id}$ and $\varsigma _{-}\in \{\varsigma ,\varsigma '\}$ as acts on $H^{0}(X,\Omega ^{2n}_{X})$ as $-{\rm id}$.
\begin{thm}\label{thm:3}
Let $E$ and $E'$ be two Enriques surfaces, $E^{[n]}$ and $E'^{[n]}$ the Hilbert scheme of $n$ points of $E$ and $E'$, $X$ and $X'$ the universal covering space of $E^{[n]}$ and $E'^{[n]}$, and $n\geq 3$.
If $X\cong X'$, then $E^{[n]}\cong E'^{[n]}$, i.e. when we fix $X$, then there is just one isomorphism class of the Hilbert schemes of $n$ points of Enriques surfaces such that they have it as the universal covering space.
\end{thm}
\begin{proof}
For an involution of $X$ which is the covering involution of some the Hilbert scheme of $n$ points of Enriques surfaces acts on $H^{2}(X,{\mathbb C})$ as ${\rm id}$, $H^{0}(X,\Omega ^{2n}_{X})$ as $-{\rm id}$, 
and $H^{2n}(X,{\mathbb C})^{2n-1,1}$ as $-{\rm id}$.
From ${\rm Proposition}5.12$, the automorphisms which acts on $H^{2}(X,{\mathbb C})$ as ${\rm id}$, $H^{0}(X,\Omega ^{2n}_{X})$ as $-{\rm id}$, are only $\rho $ and $\varsigma _{-}$.  From the definition of $\varsigma _{-}$ and Lemma $\ref{lem:1}$, $\varsigma _{-}$ does not act on $H^{2n}(X,{\mathbb C})^{2n-1,1}$ as $-{\rm id}$.
Thus we have an argument.
\end{proof}
\section*{Acknowledgements}
I would like to express my thanks to Professor Keiji Oguiso for his advice and encouragement as my supervisor.

\end{document}